\theoremstyle{plain}
\newtheorem{thm}{Theorem}[section]
\newtheorem{lemma}[thm]{Lemma}
\newtheorem{propn}[thm]{Proposition}
\newtheorem{cor}[thm]{Corollary}
\theoremstyle{definition}
\newtheorem{defn}[thm]{Definition}
\newtheorem{rmk}[thm]{Remark}
\newtheorem{ex}[thm]{Example}
\newcommand{\R}{\mathbb{R}}
\newcommand{\Z}{\mathbb{Z}}
\newcommand{\C}{\mathbb{C}}
\newcommand{\N}{\mathbb{N}}
\newcommand{\dom}{\operatorname{dom}}
\newcommand{\id}{\mathrm{id}}
\newcommand{\mat}[1]{\left( \begin{smallmatrix} #1 \end{smallmatrix} \right)}
\newcommand{\supp}{\operatorname{supp}}
\numberwithin{equation}{section}
\begin{document}

\title{H-Unitality of Smooth Groupoid Algebras}
\author{Michael D. Francis}

\maketitle

\begin{abstract}
We show that the convolution algebra of smooth, compactly-supported functions on a Lie groupoid is H-unital in the sense of Wodzicki.  We also prove H-unitality of infinite order vanishing  ideals associated to invariant, closed subsets of the unit space. This furthermore gives H-unitality for the quotients by such ideals, which are noncommutative algebras of Whitney functions. These results lead immediately to excision properties in discrete Hochschild and cyclic homology around invariant, closed subsets. This work extends previous work of the author establishing the Dixmier-Malliavin theorem in this setting.
\end{abstract}

\section{Introduction}

Let $A$   denote an associative algebra over $\C$. We do not assume $A$ is commutative or unital. Let us say  that $A$ has the \emph{weak factorization property} if every $a \in A$ can be expressed as a finite sum $a = \sum b_i c_i$, where $b_i,c_i \in A$. Notice that every unital algebra has the weak factorization property, so this notion is only of interest in the nonunital setting.

Recall that, given a Lie group $G$ equipped with Haar measure, the space $C_c^\infty(G)$ of smooth, compactly-supported functions on $G$ becomes an algebra with respect to convolution. This algebra is nonunital unless $G$ is discrete (the unit wants to be the Dirac mass at $1$). In a 1978 paper, Dixmier-Malliavin proved the following striking result.

\begin{thm}[3.1 Th\'{e}or\`{e}me, \cite{Dixmier-Malliavin}]\label{DMthm}
For any Lie group $G$, the smooth convolution algebra  $C_c^\infty(G)$ has the weak factorization property.
\end{thm}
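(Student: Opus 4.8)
\emph{The plan.} I would first recast weak factorization in representation-theoretic terms. Equip $C_c^\infty(G)$ with the left-translation representation $L$ of $G$: this is a smooth representation --- in fact every element is a smooth vector --- and the associated G\aa rding subspace, the span of the vectors $\int_G g(x)\,L_xh\,dx = g*h$ with $g,h\in C_c^\infty(G)$, is exactly the span of all convolution products. Thus Theorem~\ref{DMthm} asserts precisely that this G\aa rding subspace equals $C_c^\infty(G)$, and I would deduce it from the general principle: \emph{in any smooth representation of a Lie group on a complete locally convex space, every smooth vector is a finite sum of G\aa rding vectors.} (Each $h\in C_c^\infty(G)$ lies in a Fr\'echet subspace $C_c^\infty(K)$ with $K$ compact, and translating by a bounded set of group elements stays in a slightly larger such subspace, so completeness is never an issue.)

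\emph{Reduction to $G=\R$.} Fix a basis $X_1,\dots,X_n$ of the Lie algebra with one-parameter subgroups $t\mapsto\exp(tX_j)$, and grant the principle for $\R$. Given $h\in C_c^\infty(G)$: a partition of unity subordinate to charts, combined with left translations --- which affect only the left convolution factor, as $L_x(b*c)=(L_xb)*c$ --- reduces us to $h$ supported in a neighborhood $U$ of $e$ small enough that the second-kind coordinate map $(t_1,\dots,t_n)\mapsto\exp(t_1X_1)\cdots\exp(t_nX_n)$ is a diffeomorphism onto $U$. Applying the $\R$-principle to the smooth $\R$-representation $t\mapsto L_{\exp(tX_1)}$ on a suitable $C_c^\infty(K)$ writes $h=\sum_i\iota_1(\alpha_i)*h_i$, where each $\alpha_i\in C_c^\infty(\R)$ may be taken supported near $0$ and $\iota_1(\alpha_i)$ is the pushforward of $\alpha_i\,dt$ along the first one-parameter subgroup; iterating with $X_2,\dots,X_n$ on the successive remainders produces a finite expression $h=\sum\iota_1(\alpha)*\iota_2(\beta)*\cdots*\iota_n(\gamma)*h'$. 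Each factor $\iota_1(\alpha)*\cdots*\iota_n(\gamma)$ is the pushforward of $\alpha\otimes\cdots\otimes\gamma\in C_c^\infty(\R^n)$ under the second-kind coordinate diffeomorphism, hence lies in $C_c^\infty(G)$, and so $h$ is a finite sum of convolution products.

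\emph{The $\R$ case --- the crux.} The naive attempt is instructive. Writing $a=\lim_n\rho_n*a$ for an approximate identity $(\rho_n)$ and telescoping gives $a=\rho_1*a+\sum_n(\rho_{n+1}-\rho_n)*a$; since each summand has integral $0$ it equals $g_n'*a=g_n*a'$ for some $g_n\in C_c^\infty(\R)$ --- but this is an \emph{infinite} sum and the coefficients $g_n$ do not converge. Dixmier--Malliavin's device compresses it into a finite sum. Pick a sufficiently rapidly increasing sequence $(c_n)$ with $\sum c_n^{-1}<\infty$; the operators $1-c_n^{-2}X^2$ (with $X=\tfrac{d}{dx}$, or the infinitesimal generator of a general smooth $\R$-representation) are invertible by convolution with $\rho_n(x)=\tfrac{c_n}{2}e^{-c_n|x|}$, the infinite convolution $R=\rho_1*\rho_2*\cdots$ is a Schwartz function, and $\prod_{m\le n}(1-c_m^{-2}X^2)(R*a)\to a$. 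This is, morally, an infinite-order identity $a=\prod_m(1-c_m^{-2}X^2)(R*a)$, and turning it into an honest finite sum is the real content. On the Fourier transform side one constructs $b_1,b_2,c_1,c_2\in C_c^\infty(\R)$ with $\widehat a=\widehat{b_1}\widehat{c_1}+\widehat{b_2}\widehat{c_2}$: one fixes $b_1,b_2$ --- built as infinite products $\prod_n\widehat\psi(\xi/c_n)$ of a mollifier $\psi$ rescaled along $(c_n)$ and along a second, generically chosen, sequence --- so that $\widehat{b_1},\widehat{b_2}$ have no common zero and satisfy lower bounds off their zero sets, and then solves a B\'ezout-type interpolation problem: build $\widehat{c_1}$, entire of exponential type and rapidly decreasing on $\R$, agreeing with $\widehat a/\widehat{b_1}$ to the appropriate order along the zeros of $\widehat{b_2}$, so that $\widehat{c_2}:=(\widehat a-\widehat{b_1}\widehat{c_1})/\widehat{b_2}$ is again entire of exponential type and rapidly decreasing. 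Transforming back, $a=b_1*c_1+b_2*c_2$; the same construction in a general smooth $\R$-representation gives the principle used above.

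\emph{The main obstacle} is exactly this finite-ization of a ``morally infinite-order'' identity. All the delicate work lives in the $\R$ case: choosing two fixed Fourier transforms with disjoint zero sets and workable lower bounds, controlling the interpolation so that $\widehat{c_1}$ decays fast enough that dividing out the zeros of $\widehat{b_2}$ leaves $\widehat{c_2}$ rapidly decreasing, and keeping exponential types bounded so all four functions stay compactly supported. The partition-of-unity and translation reduction, the descent to one-parameter subgroups, and the recombination via second-kind coordinates are all routine once the $\R$ case is available.
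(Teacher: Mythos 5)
Your overall architecture --- reduce to one-parameter subgroups via coordinates of the second kind, and concentrate the hard analysis in the $\R$ case --- is the same as Dixmier--Malliavin's and as the sketch given in the paper. The genuine divergence, and the gap, lies in what your $\R$-case actually delivers. The paper's route goes through Lemma~\ref{DMlemma}: an identity $\delta_0 = f_0 + \sum_m a_m f_1^{(m)}$ for the \emph{distribution} $\delta_0$, with coefficients $a_m$ adjustable after the fact. Because this is an identity on the group side, it can be applied in \emph{any} smooth representation $\pi$ of $\R$: $v = \pi(f_0)v + \pi(f_1)\bigl(\sum_m a_m X^m v\bigr)$, with $(a_m)$ chosen small relative to the seminorms of the $X^m v$ so that the series converges (this is exactly the mechanism of Theorem~\ref{prelimfac}). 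Your B\'ezout/interpolation construction instead manufactures $\widehat{c_1},\widehat{c_2}$ directly from $\widehat a$; it factors a given \emph{function} on $\R$, and there is no Fourier transform of an abstract vector in a smooth $\R$-representation, so the sentence ``the same construction in a general smooth $\R$-representation gives the principle used above'' is unjustified and is where the argument breaks.

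This matters because your induction over $X_2,\dots,X_n$ genuinely needs the representation-theoretic statement. Left translation by $\exp(tX_1)$ is translation in the first coordinate of the second-kind chart, so there you could Fourier-transform in $t_1$ with the remaining variables as parameters. But left translation by $\exp(tX_2)$ is \emph{not} translation of any coordinate of that same chart (the subgroups need not commute), so the remainders $h_i$ cannot be treated as compactly supported functions of one real variable to which your interpolation scheme applies; re-coordinatizing at each step would then spoil the final recombination of $\iota_1(\alpha)*\cdots*\iota_n(\gamma)$ into a single pushforward of $\alpha\otimes\cdots\otimes\gamma$. To close the gap you must either upgrade the $\R$-argument to produce a $\delta_0$-identity --- i.e., re-prove Lemma~\ref{DMlemma}, which is what the Fourier-side work should be aimed at ($1 = \widehat{f_0} + P(i\xi)\widehat{f_1}$ rather than a B\'ezout identity for one particular $\widehat a$) --- or carry out the interpolation uniformly for Fr\'echet-space-valued data, which is substantially more than ``the same construction.'' A secondary point: since $\widehat{b_1}$ is itself rapidly decreasing, $\widehat a/\widehat{b_1}$ along the zeros of $\widehat{b_2}$ is controllable only if $(c_n)$, hence $b_1,b_2$, is chosen \emph{after} $a$; you should make this dependence explicit, as it is precisely the a-posteriori choice of coefficients that Lemma~\ref{DMlemma} packages.
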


The main technical ingredient of their proof  is the following lemma whose own proof is an intricate piece of hard  analysis.

\begin{lemma}[2.5 Lemme, \cite{Dixmier-Malliavin}]\label{DMlemma}
For any sequence $(c_m)_{m=0}^\infty$ of positive scalars, there exist $f_0,f_1 \in C_c^\infty(\R)$ and scalars $(a_m)$ with $|a_m| \leq c_m$ such that $\delta_0 = f_0 +  \sum_{m=0}^\infty a_m f_1^{(m)}$. Here,  $\delta_0$ denotes the Dirac mass at $0$ and the series converges in the sense  of compactly-supported distributions. The functions $f_0, f_1$ may be chosen to be supported in any fixed neighbourhood of the origin.
\end{lemma}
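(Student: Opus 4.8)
The plan is to pass to the Fourier transform and recast the statement as a division problem in a Paley--Wiener space. Write $\widehat{f_0},\widehat{f_1}$ for the Fourier transforms and $p(z)=\sum_{m\ge 0}a_mz^m$ for the symbol of the formal operator $\sum_m a_m\partial^m$, where $\partial=d/dx$. Applying the Fourier transform to the desired identity $\delta_0=f_0+\sum_m a_mf_1^{(m)}$ turns it into
\[
1=\widehat{f_0}(\xi)+\widehat{f_1}(\xi)\,p(i\xi),\qquad\xi\in\R,
\]
provided $p$ is entire, so that $\sum_m a_m(i\xi)^m$ converges to $p(i\xi)$; convergence of the series in the sense of compactly supported distributions will be a by-product, checked at the end. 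By the Paley--Wiener--Schwartz theorem, requiring $f_0,f_1\in C_c^\infty(\R)$ with support in a prescribed interval $[-\epsilon,\epsilon]$ is exactly requiring $\widehat{f_0},\widehat{f_1}$ to be entire of exponential type $\le\epsilon$ and rapidly decreasing on $\R$. So it suffices to produce an entire function $p(z)=\sum_{m\ge 0} a_mz^m$ with $|a_m|\le c_m$, together with an entire function $v=\widehat{f_1}$ of exponential type $\le\epsilon$, rapidly decreasing on $\R$, such that $\widehat{f_0}:=1-v\cdot p(i\,\cdot\,)$ is again of exponential type $\le\epsilon$ and rapidly decreasing.

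For the symbol I would take $a_m=0$ for $m$ odd and $a_{2k}=(-1)^k b_k$ with $b_k=e_k(\beta_1^{-1},\beta_2^{-1},\dots)$ the $k$-th elementary symmetric function in the reciprocals of a sequence $\beta_1<\beta_2<\cdots\to\infty$ to be chosen. Then $p(z)=B(-z^2)$ with $B(t)=\prod_j(1+t/\beta_j)$, so $p(i\xi)=B(\xi^2)$; here $B$ is entire, positive and strictly increasing on $[0,\infty)$, and $B(\xi^2)$ grows faster than every polynomial. Choosing the $\beta_j$ to increase quickly enough — inductively, in terms of the given sequence $(c_m)$ — one can arrange simultaneously that $b_k\le c_{2k}$ for all $k$ (so that $|a_m|\le c_m$) and that $\xi\mapsto B(\xi^2)$ has order $<1$, hence exponential type $0$. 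Intuitively $p(\partial)=\sum_m a_m\partial^m$ is the operator $B(-\partial^2)=\prod_j(1-\beta_j^{-1}\partial^2)$, whose fundamental solution is the Schwartz function $E$ obtained as the infinite convolution of the probability densities $x\mapsto\tfrac12\sqrt{\beta_j}\,e^{-\sqrt{\beta_j}|x|}$; one has $B(-\partial^2)E=\delta_0$ and $\widehat E(\xi)=1/B(\xi^2)$, so morally $f_1$ should be a compactly supported version of $E$.

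The construction of $f_1$ is where the real work lies. The naive guess $\widehat{f_1}=1/B(\xi^2)=\widehat E$ is not the Fourier transform of a compactly supported function: it is not entire, having simple poles at the points $\xi=\pm i\sqrt{\beta_j}$. The fix is to set
\[
\widehat{f_1}(\xi)=\frac{1+\widehat g(\xi)}{B(\xi^2)}
\]
for a function $g\in C_c^\infty(\R)$ with $\supp g\subseteq[-\epsilon,\epsilon]$ chosen so that $1+\widehat g$ vanishes at each point $\pm i\sqrt{\beta_j}$. Then the quotient is entire; it is of exponential type $\le\epsilon$, because it is entire and dividing by an entire function of order $<1$ does not increase the exponential type; and on $\R$ it is bounded by $2/B(\xi^2)$, hence rapidly decreasing. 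Thus $f_1\in C_c^\infty(\R)$ with support in $[-\epsilon,\epsilon]$, and we take $f_0:=-g$. The requirement on $g$ is the interpolation problem
\[
\widehat g(\pm i\sqrt{\beta_j})=-1,\qquad\text{i.e.}\qquad\int g(x)\,e^{\pm\sqrt{\beta_j}\,x}\,dx=-1\quad(j\ge 1),
\]
a single condition per $j$ if $g$ is taken even. Solving this is the intricate analytic core of Dixmier--Malliavin's argument, and is the step I expect to be the main obstacle: one must realise the prescribed values $-1$ at the extremely sparse nodes $\pm i\sqrt{\beta_j}$ by a function in the Paley--Wiener class of $[-\epsilon,\epsilon]$, a class whose members may be as large as $e^{\epsilon\sqrt{\beta_j}}$ at those nodes (so the values $-1$ are ``deep dips'' and there is ample room, but quantitative control is delicate). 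One can approach it by an explicit Lagrange-type series built from $B(\cdot^{2})$ with suitable convergence factors, by a de Branges space interpolation, or by an iterative scheme correcting a crude truncation of $E$ stage by stage; in each case the convergence estimates are the hard part. Note there is no tension with the earlier condition $b_k\le c_{2k}$, since making the $\beta_j$ grow faster both sparsifies the interpolation nodes and shrinks the $b_k$.

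Granting such $f_0=-g$ and $f_1$, the pieces assemble. With $a_{2k}=(-1)^k b_k$ and $a_{2k+1}=0$ we have $\sum_m a_mf_1^{(m)}=\sum_k b_k(-\partial^2)^kf_1=B(-\partial^2)f_1$, whose Fourier transform is $B(\xi^2)\widehat{f_1}(\xi)=1+\widehat g(\xi)$; together with $\widehat{f_0}=-\widehat g$ this gives $\widehat{f_0}+B(\xi^2)\widehat{f_1}=1=\widehat{\delta_0}$, hence $\delta_0=f_0+\sum_m a_mf_1^{(m)}$. The series converges in $\mathcal E'(\R)$: the Fourier transforms $\sum_{k\le N}b_k\xi^{2k}\widehat{f_1}(\xi)$ of its partial sums are bounded in absolute value by the bounded function $B(\xi^2)|\widehat{f_1}(\xi)|=|1+\widehat g(\xi)|$ and converge pointwise to $1+\widehat g(\xi)$, so pairing against any test function converges by dominated convergence through Parseval. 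The support conditions hold by construction — $f_0$ and $f_1$ are supported in $[-\epsilon,\epsilon]$, and $B(-\partial^2)f_1$ is supported in $\supp f_1$ because it is a $\mathcal D'(\R)$-limit of the finite-order operators $\prod_{j\le N}(1-\beta_j^{-1}\partial^2)$ applied to $f_1$, which do not enlarge supports — and $|a_m|\le c_m$ by the choice of the $\beta_j$. This proves the lemma.
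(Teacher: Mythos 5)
First, a point of context: this paper does not prove Lemma~\ref{DMlemma} at all --- it is quoted verbatim from Dixmier--Malliavin with a citation, and the author explicitly describes its proof as ``an intricate piece of hard analysis'' that is imported as a black box. So there is no in-paper argument to compare against; the comparison has to be with the original source, and your Fourier-side reduction is in fact very close in spirit to what Dixmier--Malliavin actually do (an infinite product $\prod_j(1+\xi^2/\beta_j)$ with rapidly growing $\beta_j$, division in a Paley--Wiener class, and a correction term to kill the poles).

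The problem is that your write-up contains a genuine gap, and you have (commendably) flagged it yourself: the construction of $g\in C_c^\infty(\R)$ with $\supp g\subseteq[-\epsilon,\epsilon]$ and $\widehat g(\pm i\sqrt{\beta_j})=-1$ for all $j$ is not an ancillary technicality --- it \emph{is} the lemma. Everything else in your argument (the choice $a_{2k}=(-1)^k e_k(\beta^{-1})$, the entirety and type-zero of $B(\xi^2)$, the quotient theorem for exponential type, the dominated-convergence verification of convergence in $\mathcal{E}'(\R)$, the support bookkeeping) is correct but routine; the statement ``one can approach it by a Lagrange-type series, a de Branges space argument, or an iterative correction, and in each case the convergence estimates are the hard part'' is precisely the point at which a proof would have to begin. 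Note also that the interpolation data and the coefficient constraint interact: the nodes $\pm i\sqrt{\beta_j}$ are dictated by the same sequence $(\beta_j)$ that must be chosen to force $e_k(\beta_1^{-1},\beta_2^{-1},\dots)\leq c_{2k}$ (itself a small unaddressed point, since adding more $\beta_j$'s \emph{increases} each $e_k$), so one cannot freely retune $(\beta_j)$ after the fact to make the interpolation easier without rechecking the coefficient bounds. As it stands, you have correctly reduced the lemma to its analytic core and then asserted that core; the reduction is sound, but the proof is incomplete.
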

The (already nontrivial) $G=\R$ case of Theorem~\ref{DMthm} follows quite directly from Lemma~\ref{DMlemma}. Given any $\varphi \in C_c^\infty(\R)$, by choosing the coefficients $a_m$  to converge sufficiently quickly to $0$, one has that $\varphi_1 \coloneqq \sum_m a_m \varphi^{(k)}$ belongs to $C_c^\infty(\R)$ and so
\[ \varphi  = \delta_0 * \varphi =  f_0 * \varphi + \sum_{m=0}^\infty a_m f_1^{(m)} * \varphi =  f_0 * \varphi + f_1 *  \varphi_1. \]
To prove Theorem~\ref{DMthm} for a general Lie group, one writes $G$ (locally) as a product of 1-parameter subgroups and factors one group at a time.

In \cite{Francis[DM]}, the author extended Dixmier-Malliavin's result to the setting of Lie groupoids:

\begin{thm}[Theorem~5.2, \cite{Francis[DM]}]\label{DMprev}
For any Lie groupoid $G$, the smooth convolution algebra $C_c^\infty(G)$  has the weak factorization property.
\end{thm}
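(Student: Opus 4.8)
\noindent The plan is to mimic Dixmier--Malliavin's own strategy for Lie groups --- peel off one ``coordinate direction'' at a time, invoking Lemma~\ref{DMlemma} in each --- but with the one-parameter subgroups of a Lie group replaced by flows of right-invariant vector fields on the groupoid. Fix a smooth Haar system; one checks first that the weak factorization property does not depend on this choice. Since a finite sum has the weak factorization property as soon as each summand does, I would begin by using a partition of unity on the unit space $M$, pulled back along the source and target maps, to reduce to the case of a $\varphi \in C_c^\infty(G)$ whose support projects under both $s$ and $t$ into a single precompact coordinate chart $U \subseteq M$ over which the Lie algebroid $AG$ admits a frame $X_1,\dots,X_q$ ($q = \operatorname{rank} AG$), with a little room to spare.

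Next I would isolate the basic peeling step. Given a section $X$ of $AG$ near $U$, let $\overrightarrow{X}$ be the associated right-invariant vector field; it is tangent to the source fibres, and its flow $\Phi^t$ is left-translation by the one-parameter family of local bisections $\exp(tX)$. Hence $(\Lambda_h \psi)(g) := \int_\R h(s)\,\psi(\Phi^{-s}g)\,ds$ defines, for $h$ a compactly supported distribution on $\R$ and $\psi \in C_c^\infty(G)$ suitably small-supported, a representation of $(\mathcal{E}'(\R),\ast)$ via $\Lambda_{h_1 \ast h_2} = \Lambda_{h_1}\Lambda_{h_2}$, with $\Lambda_{\delta_0} = \id$ and $\Lambda_{\delta_0^{(m)}} = \overrightarrow{X}^{\,m}$ (up to sign). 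Feeding in the decomposition $\delta_0 = f_0 + \sum_m a_m f_1^{(m)}$ furnished by Lemma~\ref{DMlemma}, with $f_0,f_1$ supported in a small interval and $|a_m|$ chosen --- after $\varphi$ is given --- to decay fast enough that $\varphi_1 := \sum_m a_m \overrightarrow{X}^{\,m}\varphi$ converges in $C_c^\infty(G)$, one obtains
\[
  \varphi \;=\; \Lambda_{f_0}\varphi \;+\; \Lambda_{f_1}\varphi_1 ,
\]
where each $\Lambda_{f_i}$ is convolution by a compactly supported distribution $\nu_i$ on $G$ supported on the $(\dim M + 1)$-dimensional subset swept out by the bisections $\exp(sX)$ ($|s|$ small, base point in $U$) and conormal to it with smooth symbol.

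I would then iterate this over the whole frame, peeling $\overrightarrow{X_1},\dots,\overrightarrow{X_q}$ in turn and applying the peeling step at each stage to the finitely many functions produced so far (re-choosing the coefficient bounds in Lemma~\ref{DMlemma} fast enough each time). The result is an expression for $\varphi$ as a finite sum of terms $\nu_1 \ast \nu_2 \ast \cdots \ast \nu_q \ast (D\varphi)$, where each $\nu_j$ is one of the two transverse densities coming from the $j$-th peeling and $D\varphi \in C_c^\infty(G)$ is a convergent series of iterated invariant derivatives. The point of using a full frame is that the map $U \times (-\varepsilon,\varepsilon)^q \to G$ sending a base point and parameters $(s_1,\dots,s_q)$ to the corresponding composite of bisection values is a local diffeomorphism near the unit $u(x_0)$ --- its differential there realizes the splitting $T_{u(x_0)}G = T_{x_0}M \oplus A_{x_0}G$ --- so that the $q$-fold convolution $\nu_1 \ast \cdots \ast \nu_q$, being conormal to an \emph{open} subset of $G$, is in fact an honest element of $C_c^\infty(G)$. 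Each term is then a genuine convolution product of two elements of $C_c^\infty(G)$, which proves the theorem.

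I expect the main obstacle to be this final point: making the ``transverse density'' bookkeeping precise --- defining the $\nu_i$ and the operators $\Lambda$ with enough care to establish the module identity and the continuity needed to insert the distributional identity of Lemma~\ref{DMlemma} termwise, and, above all, proving that convolving the $q$ transverse densities along a frame produces a \emph{smooth}, compactly supported density rather than merely a distribution. That last step is a local computation in coordinates adapted to the bisection sweep and is where the transversality encoded in $T_{u(x_0)}G = T_{x_0}M \oplus A_{x_0}G$ enters; it is the groupoid counterpart of the elementary fact, implicit in Dixmier--Malliavin's proof, that a product of transverse one-parameter subgroups fills out a neighbourhood of the identity. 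Note that the argument uses nothing about $G$ beyond submersivity of the source map and local triviality of $AG$, so it needs no properness or connectedness hypotheses.
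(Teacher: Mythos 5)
Your proposal follows essentially the same route as the paper's argument (carried out here in the proof of Theorem~\ref{actionunitality} and in \cite{Francis[DM]}): a partition of unity on the base reduces to the case where $AG$ is trivialized by a frame $X_1,\dots,X_k$, Lemma~\ref{DMlemma} is applied iteratively along the flows of the corresponding right-invariant vector fields with coefficients chosen to decay fast enough for all the finitely many functions in play, and the composite of the one-parameter ``peelings'' is identified with convolution by a genuine element of $C_c^\infty(G)$ precisely because the sweep map $(t_1,\dots,t_k,b)\mapsto e^{t_1X_1}\cdots e^{t_kX_k}b$ is a local diffeomorphism onto an open set. Your packaging of the intermediate kernels as conormal distributions is only a cosmetic difference from the paper's direct use of the pushforward $\theta_W$ (with Jacobian factor), so the two arguments coincide in substance.
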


We always assume our Lie groupoids $G$ are equipped with a smooth Haar system so that a convolution product on  $C_c^\infty(G)$ is defined (this arbitrary choice can be avoided by working with appropriate densities in place of functions, see \cite{Connes[BOOK]},~Section~2.5). The total space of $G$ (but not its unit space) is permitted to be non-Hausdorff. As usual, in the non-Hausdorff case, $C_c^\infty(G)$ is defined somewhat differently as the span of all $C_c^\infty(U)$, where $U$ is a chart neighbourhood in $G$ (see, e.g. \cite{Connes[BOOK]}, \cite{Connes[1978]}, \cite{Khoshkam-Skandalis[2004]}).

In the case of groupoids, there is an additional phenomenon (absent in the group case) of ideals associated to invariant subsets of the unit space. If $Z$ is a closed, invariant subset of the unit space of $G$ and $G_Z \subseteq G$ denotes the closed subgroupoid consisting of arrows  whose source and target lie in $Z$, we denote by  $J_Z^\infty \subseteq C_c^\infty(G)$ the ideal (with respect to convolution) of functions which vanish to infinite order on $G_Z$. Weak factorization was also established by the author for these ideals, in the case where $Z$ is a closed invariant submanifold:

\begin{thm}[Theorem~7.1, \cite{Francis[DM]}]\label{idealDM}
For any Lie groupoid $G$, for any closed, invariant submanifold $Z$ of the unit space of $G$, the corresponding ideal $J_Z^\infty \subseteq  C_c^\infty(G)$ has the weak factorization property. 
\end{thm}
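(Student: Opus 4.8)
The plan is to keep the factorisation furnished by Theorem~\ref{DMprev} and then push both factors into $J_Z^\infty$ by peeling off shells around $G_Z$. Write $G^{(0)}$ for the unit space. Two elementary facts set things up. First, since $Z$ is invariant, $G_Z = s^{-1}(Z) = t^{-1}(Z)$, so if $u \in C_c^\infty(G^{(0)})$ vanishes near $Z$ then $u\circ t$ and $u\circ s$ vanish near $G_Z$, whence $(u\circ t)h$ and $h\,(u\circ s)$ lie in $J_Z^\infty$ for every $h\in C_c^\infty(G)$. Second, multiplication by a function pulled back from $G^{(0)}$ along $t$ slides through a convolution on the left, and along $s$ on the right: $(u\circ t)\cdot(h_1*h_2) = \big((u\circ t)h_1\big)*h_2$ and $(h_1*h_2)\cdot(v\circ s) = h_1*\big((v\circ s)h_2\big)$, because in the convolution integral the target of $\gamma\alpha^{-1}$ is $t(\gamma)$ and the source of $\alpha$ is $s(\gamma)$.

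Fix $f\in J_Z^\infty$. Put $r_0 = f$, and inductively: factor $r_{i-1}$ into a finite sum of convolution products in $C_c^\infty(G)$, of uniformly bounded length and with factors supported near $\supp r_{i-1}$ (Theorem~\ref{DMprev} with support control, in charts adapted to the submanifold $G_Z$); multiply the left factors by $u_i\circ t$ and the right factors by $v_i\circ s$, where $u_i,v_i\in C_c^\infty(G^{(0)})$ are bump functions equal to $1$ near $\{\operatorname{dist}(\cdot,Z)\sim\rho_i\}$ and vanishing near $Z$, for a geometrically decreasing sequence $\rho_i\to0$; call the resulting finite sum of products $\beta_i$, and set $r_i = r_{i-1}-\beta_i$. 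By the two facts above, $\beta_i$ is a finite sum of convolution products of elements of $J_Z^\infty$, and $\beta_i = (u_i\circ t)(v_i\circ s)\,r_{i-1}$, so that $r_i = \Theta_i f$ with $\Theta_i = \prod_{l\le i}\big(1-(u_l\circ t)(v_l\circ s)\big)$. A suitable geometric spacing makes $\Theta_i\equiv1$ on a neighbourhood of $G_Z$ and $\supp(\Theta_i f)\subseteq\{\operatorname{dist}(\cdot,G_Z)<\rho_i\}$ with $|\partial^k\Theta_i|\lesssim\rho_i^{-k}$ there; since flatness of $f$ on $G_Z$ bounds $f$ and its derivatives on that set by $C_{k,N}\rho_i^N$ for every $N$, it follows that $r_i\to0$ in $C_c^\infty(G)$. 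Hence $f = \sum_{i\ge1}\beta_i$ with convergence in $C_c^\infty(G)$, where each $\beta_i$ is a finite sum of products of $J_Z^\infty$-elements supported in the shell $\{c\rho_i\le\operatorname{dist}(\cdot,G_Z)\le C\rho_{i-1}\}$.

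It remains to collapse this to a finite sum. The key point is that $b*c = 0$ whenever $b$ occurs in $\beta_i$, $c$ occurs in $\beta_j$, and $|i-j|$ is large: in the convolution integral the source of $\gamma\alpha^{-1}$ is $t(\alpha)$, so both $b(\gamma\alpha^{-1})\ne0$ and $c(\alpha)\ne0$ pin $\operatorname{dist}(t(\alpha),Z)$ to a single scale --- here one uses that the distance of a point to $G_Z$, of its source to $Z$, and of its target to $Z$ are mutually comparable near $G_Z$, which holds because the two submersions $\pi\circ s$ and $\pi'\circ t$ cutting out $G_Z$ (for local defining functions $\pi,\pi'$ of $Z$) differ by an invertible matrix-valued smooth function, giving a two-sided {\L}ojasiewicz-type estimate $c\,|\pi\circ s|\le|\pi'\circ t|\le C\,|\pi\circ s|$, with the constants made uniform over a finite cover of a compact neighbourhood of $\supp f$. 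Choosing the $\rho_i$ to decrease fast enough relative to these constants produces a fixed integer $K$ for which this vanishing holds whenever $|i-j|\ge K$; grouping the products according to the residue of their shell index modulo $K$ (and the bounded term-index within each $\beta_i$) then rewrites $f$ as a finite sum of convolution products of elements of $J_Z^\infty$ --- the desired weak factorisation.

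I expect the technical heart to be exactly this coordination: arranging the bump functions, the radii $\rho_i$, and the modulus $K$ so that simultaneously the grouped sums converge in $C_c^\infty(G)$ --- driven by the flatness of $f$, which dominates the polynomially large derivatives of the $\Theta_i$ --- and the off-diagonal convolutions vanish --- driven by the comparability of transverse distances under the groupoid action. The remaining ingredients are routine: applying Theorem~\ref{DMprev} with control on supports and on the number of terms, the bookkeeping of shell supports, and, when $G$ is non-Hausdorff, carrying the whole argument out chart by chart with $J_Z^\infty$ defined locally.
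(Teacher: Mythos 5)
Your shell decomposition breaks down at the final ``collapse to a finite sum'' step, and the failure is structural rather than technical. In each shell the Dixmier--Malliavin factorization of $r_{i-1}$ produces left factors that are (for a fixed chart cover and a fixed series $P$) \emph{fixed} functions $g_j$ on $G$ --- essentially the $\theta_W(f_{i_1}\otimes\cdots\otimes\rho)$ built from the approximate identity of Lemma~\ref{DMlemma} --- and these are nowhere near flat on $G_Z$ (indeed $\int f_0 = 1$ is forced, so they cannot be made small or flat). All the decay coming from the flatness of $f$ sits in the \emph{right} factors $\Psi_j(r_{i-1})$. Consequently your grouped left factor for the residue class $r$ is $\bigl(\sum_{i\equiv r} u_i\bigr)\circ t\cdot g_j$, and the function $\sum_{i\equiv r}u_i$ is bounded but has derivatives of size $\rho_i^{-|\alpha|}$ on the $i$-th shell with nothing to damp them; it does not extend smoothly across $Z$. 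So the regrouped ``factors'' are not elements of $J_Z^\infty$ --- they are not even in $C_c^\infty(G)$ --- and the weak factorization is not obtained. (The orthogonality claim $b*c=0$ across well-separated shells, via comparability of $\operatorname{dist}(s(\gamma),Z)$, $\operatorname{dist}(t(\gamma),Z)$ and $\operatorname{dist}(\gamma,G_Z)$ on compacta, is fine for $Z$ a submanifold; the convergence $r_i\to 0$ is also fine. The gap is solely that the left halves of your products carry no flatness and hence cannot be resummed.)

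The way this is actually handled (in \cite{Francis[DM]}, and visible in Sections~6--7 of the present paper) is to put the flatness into a single \emph{scalar} factor before invoking Dixmier--Malliavin: a Whitney/Tougeron-type division theorem (here Theorem~\ref{subfac}, via regularized distance functions) produces one $\rho\in C^\infty(B)$, flat on $Z$ and positive off $Z$, with $\varphi_i=\rho\cdot\psi_i$ and $\psi_i\in J_Z^\infty$. One then applies the plain groupoid Dixmier--Malliavin factorization to the $\psi_i$ and slides $\rho$ (pulled back along the source/target map) onto the left convolution factors, which thereby become flat on $G_Z$. If you want to rescue your scheme, you would have to inject a flat damping factor into the left factors of each shell --- but doing that uniformly in $i$ is exactly the division-by-a-flat-function step you were trying to avoid.
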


The purpose of this article is to strengthen Theorem~\ref{DMprev} and Theorem~\ref{idealDM} by showing that $C_c^\infty(G)$ and $J^\infty_Z$ are in fact  homologically unital in the sense defined by Wodzicki.  This immediately implies excision results for the Hochschild and cyclic homology of these algebras.   Applying standard permanence properties, we obtain H-unitality and excision results for noncommutative algebras of Whitney functions as well.

The present article is also more general than \cite{Francis[DM]} in that $Z$ is permitted to  be an arbitrary  invariant, closed subset of the unit space, and not necessarily a submanifold. Furthermore, an increased amount of care is taken to ensure that the results and arguments apply in the case of non-Hausdorff Lie groupoids; the results in \cite{Francis[DM]} do apply in the non-Hausdorff setting, but the verification of this is mostly left to the reader.

We give a brief review of the homological notions at play. For simplicity, we only consider complex scalars,  $\C$ being the ground field for all algebras of interest here. In general, this restriction is not necessary, though working with a ground field of characteristic zero does simplify definitions somewhat.

Given an associative algebra $A$ over $\C$, the \emph{bar complex} of $A$ is the chain complex
\[ 
\begin{tikzcd}
\ldots \ar[r,"{d'}"]  & A^{\otimes 3} \ar[r,"{d'}"] & A^{\otimes 2} \ar[r,"{d'}"] & A \ar[r] & 0, 
\end{tikzcd}\]
where the differential $d'$ is determined by
\begin{align*}
d'(a_0\otimes\ldots\otimes a_n) &= \sum_{i=0}^{n-1}  (-1)^i a_0 \otimes \ldots \otimes a_ia_{i+1} \otimes \ldots \otimes a_n &&  n \geq 1.
\end{align*}

\begin{rmk}
If one modifies the above definition of $d'$  to include the ``wrap around term'' $a_n a_0 \otimes \ldots \otimes a_{n-1}$, then one gets the differential  $d$ involved in the definition of Hochschild and cyclic homology. 
\end{rmk}

The following terminology was introduced in the groundbreaking work of  Wodzicki  on excision in  cyclic homology and algebraic K-theory (\cite{Wodzicki2}, \cite{Wodzicki}).

\begin{defn}
An associative algebra $A$ over $\C$ is said to be \emph{homologically unital} or \emph{H-unital} if its bar complex  is acyclic. That is, if
\[ 
\begin{tikzcd}
\ldots \ar[r,"{d'}"]  & A^{\otimes 3} \ar[r,"{d'}"] & A^{\otimes 2} \ar[r,"{d'}"] & A \ar[r] & 0 
\end{tikzcd}\]
is an exact sequence.
\end{defn}

As for permanence properties, one has that H-unital algebras over $\C$ are closed under under taking quotients and extensions. In general, this is only true for so-called ``pure extensions'', a hypothesis that is automatic when working over $\C$.

\begin{thm}[Corollary~3.4, \cite{Wodzicki}]\label{permanence}
Let  $0 \to A \to B \to C \to 0$ be a short exact sequence of $\C$-algebras, where  $A$  is H-unital. Then, $B$ is H-unital if and only if $C$ is H-unital.
\end{thm}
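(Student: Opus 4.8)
The plan is to build a short exact sequence of bar complexes out of the given short exact sequence of algebras and then study the third complex using the H-unitality of $A$. Since $\C$ is a field, the sequence $0 \to A \to B \xrightarrow{p} C \to 0$ splits $\C$-linearly, and the algebra surjection $p$ induces a chain map of bar complexes $B^{\otimes\bullet} \to C^{\otimes\bullet}$ which is surjective in each degree (tensor products of surjections of $\C$-vector spaces being surjective) and whose kernel is the subcomplex $K_\bullet$ with $K_n = \sum_{i=1}^n B^{\otimes(i-1)} \otimes A \otimes B^{\otimes(n-i)}$: indeed $B^{\otimes n}/K_n \cong (B/A)^{\otimes n} = C^{\otimes n}$, and $K_\bullet$ is a subcomplex because $A$ is a two-sided ideal, so multiplying two adjacent tensor factors cannot remove an $A$-factor. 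Thus there is a short exact sequence of complexes
\[ 0 \longrightarrow K_\bullet \longrightarrow B^{\otimes\bullet} \longrightarrow C^{\otimes\bullet} \longrightarrow 0, \]
and its long exact homology sequence reduces the theorem to a single claim: \emph{if $A$ is H-unital, then $K_\bullet$ is acyclic}. Granting this, the bar complexes of $B$ and $C$ have isomorphic homology, so one is exact precisely when the other is.

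To prove the claim, fix a vector space splitting $B = A \oplus V$ ($V$ is not assumed to be a subalgebra) and filter $K_\bullet$ by the number of tensor factors lying in $V$, letting $F^p K_\bullet \subseteq K_\bullet$ be the span of the elementary tensors with at most $p$ factors in $V$. Because $A \cdot A \subseteq A$, $A \cdot V, V \cdot A \subseteq A$, and $V \cdot V \subseteq A \oplus V$, the differential $d'$ never raises the number of $V$-factors, so each $F^p K_\bullet$ is a subcomplex; the filtration is exhaustive and finite in each degree, so it suffices to show each associated graded complex $\mathrm{gr}^p K_\bullet$ is acyclic. On $\mathrm{gr}^p K_\bullet$ only the terms of $d'$ that multiply two adjacent $A$-factors survive, so, sorting elementary tensors by the positions of their $V$-factors and reordering tensor factors, $\mathrm{gr}^p K_\bullet$ is isomorphic as a complex to $P_\bullet(A)^{\otimes (p+1)} \otimes V^{\otimes p}$ --- with each copy of $V$ placed in homological degree $1$ --- with the summand in which every $P$-factor contributes its degree-$0$ term $\C$ discarded. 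Here $P_\bullet(A) = (\cdots \to A^{\otimes 2} \to A \xrightarrow{\,0\,} \C \to 0)$ is the augmented bar complex, and H-unitality of $A$ (which includes $A^2 = A$) is exactly the statement that $P_\bullet(A)$ is quasi-isomorphic to $\C$ concentrated in degree $0$. By the Künneth theorem over $\C$, the complex $P_\bullet(A)^{\otimes(p+1)} \otimes V^{\otimes p}$ then has homology concentrated in its bottom degree $p$, equal to $V^{\otimes p}$; since this bottom degree is precisely the discarded summand, which is a subcomplex whose inclusion is a quasi-isomorphism, the long exact sequence shows the quotient $\mathrm{gr}^p K_\bullet$ is acyclic. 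The spectral sequence of the filtration --- convergent because the filtration is finite in each degree --- then gives that $K_\bullet$ is acyclic, which finishes the proof.

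The only genuinely delicate step is this identification of the associated graded complexes: correctly accounting for the zero augmentation $A \to \C$, for the exclusion of the all-degree-$0$ summand, and for the resulting quasi-isomorphism. Everything else is the standard apparatus --- splitting over a field, the long exact homology sequence, and an exhaustive filtration, finite in each degree, with acyclic graded pieces. An essentially equivalent route would be to recast H-unitality as the vanishing of the relevant $\mathrm{Tor}$ groups over the unitalization $A^+$ and invoke a base-change spectral sequence directly.
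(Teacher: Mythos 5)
The paper does not prove this statement: it is quoted from Wodzicki (Corollary~3.4 of \cite{Wodzicki}) and used as a black box, so there is no in-paper argument to compare against. Your proof is correct and self-contained, and it is essentially the standard argument underlying Wodzicki's excision machinery: reduce to acyclicity of the kernel complex $K_\bullet = \ker(B^{\otimes\bullet}\to C^{\otimes\bullet})$ (which, over the field $\C$, is exactly the span of elementary tensors with at least one factor in $A$, and is a subcomplex because $A$ is a two-sided ideal), filter by the number of factors lying in a linear complement $V$ of $A$, and identify the associated graded pieces with tensor powers of the augmented bar complex $P_\bullet(A)$, which H-unitality and K\"unneth reduce to $\C$ in degree zero. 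The two points that genuinely need care are both handled correctly: the induced differential on $\mathrm{gr}^p$ retains only the adjacent $A\cdot A$ multiplications (the $A\cdot V$, $V\cdot A$ and $V\cdot V$ terms strictly lower the $V$-count), and the discarded all-$V$ summand $V^{\otimes p}$ sitting in bottom degree $p$ receives zero differential from degree $p+1$ (since the map $A\to\C$ in $P_\bullet(A)$ is zero), so its inclusion into the full tensor-product complex is a quasi-isomorphism and the quotient $\mathrm{gr}^p K_\bullet$ is acyclic. The only detail left implicit is the Koszul sign bookkeeping in the identification $\mathrm{gr}^p K_\bullet\cong P_\bullet(A)^{\otimes(p+1)}\otimes V^{\otimes p}$ after reordering tensor factors; this is routine and does not affect the argument.
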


\begin{ex}
Every unital algebra $A$ is H-unital. Indeed, $x \mapsto 1 \otimes x : A^{\otimes n} \to A^{\otimes (n+1)}$ defines a contracting chain homotopy for the bar complex. More generally, if $A$ is \emph{locally unital} in the sense that, for any finite set of elements, there is an element acting as a unit for those elements, then $A$ is  H-unital. 
\end{ex}
\begin{ex}Any Banach algebra with a bounded approximate unit is H-unital. In particular, all C*-algebras are H-unital. 
\end{ex}

\begin{ex}
More in the spirit of the present article, if $M$ is a smooth manifold and $Z \subseteq M$ is any closed subset, then the ideal $I_Z^\infty \subseteq C^\infty(M)$ of smooth functions which vanish with all their derivatives on $Z$ is H-unital. Invoking Theorem~\ref{permanence}, one has that the algebra $\mathcal{E}^\infty(Z)\coloneqq C^\infty(M)/I_Z^\infty$ of Whitney functions on $Z$ is H-unital. Theorem~\ref{permanence} also gives results in the relative setting; if $Z \subseteq Y$ are closed subsets of $M$, then the kernel $I^\infty_{Z, Y} \cong I^\infty_Z/I^\infty_Y$ of the restriction map $\mathcal{E}^\infty(Y) \to \mathcal{E}^\infty(Z)$ is H-unital. 
\end{ex}

The importance of H-unitality stems from its close relationship to the excision problem for Hochschild and cyclic homology.

\begin{thm}[3.1~Theorem, \cite{Wodzicki}]\label{wod}
Let $A$ be an associative algebra over $\C$. Then, the following are equivalent:
\begin{enumerate}
\item $A$ has the excision property for Hochschild homology.
\item $A$ has the excision property for cyclic homology.
\item $A$ is H-unital.
\end{enumerate}
\end{thm}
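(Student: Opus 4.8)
The plan is to establish the cycle of implications $(3)\Rightarrow(1)\Rightarrow(2)\Rightarrow(3)$. It helps to first spell out what ``excision'' means concretely. Given an extension $0\to A\to B\to C\to 0$ of $\C$-algebras in which $A$ sits as an ideal (so $C=B/A$), one has a degreewise-surjective map of Hochschild complexes $C_\ast(B)\to C_\ast(C)$; let $\bar{C}_\ast(B{:}A)$ denote its kernel. The resulting short exact sequence of complexes produces a long exact sequence involving the \emph{relative} groups $HH_\ast(B{:}A)=H_\ast(\bar{C}_\ast(B{:}A))$, and there is always a natural map $HH_\ast(A)\to HH_\ast(B{:}A)$ coming from the inclusion of complexes $C_\ast(A)\hookrightarrow\bar{C}_\ast(B{:}A)$ (the Hochschild chains of $A$ land in the kernel since $A$ maps to $0$ in $C$). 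Excision for Hochschild homology means this natural map is an isomorphism for \emph{every} such extension; equivalently, that $C_\ast(A)\hookrightarrow\bar{C}_\ast(B{:}A)$ is always a quasi-isomorphism. Cyclic excision is the analogous statement for the cyclic bicomplexes. So the whole theorem is a statement about when these canonical inclusions of complexes are quasi-isomorphisms.

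The heart of the matter is $(3)\Rightarrow(1)$, and here is where I expect the main obstacle to lie. First I would re-encode H-unitality homologically: writing $A^+=\C\oplus A$ for the unitalization and $\C\cong A^+/A$ for the trivial $A^+$-module, the reduced bar resolution of $\C$ over the \emph{unital} ring $A^+$ identifies the bar complex of $A$ with the complex computing $\operatorname{Tor}^{A^+}_\ast(A,\C)$; thus $A$ is H-unital exactly when these $\operatorname{Tor}$ groups vanish in all degrees. The key technical point is that this vanishing \emph{propagates}: once $\operatorname{Tor}^{A^+}_\ast(A,\C)=0$, one obtains $\operatorname{Tor}^{A^+}_{>0}(M,\C)=0$ and $\operatorname{Tor}^{A^+}_{>0}(\C,N)=0$ for all right (resp.\ left) $A^+$-modules $M,N$ on which the unit acts as the identity --- this is where working over a field, equivalently the purity hypothesis needed in the general statement, is used. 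Granting this, filter $\bar{C}_\ast(B{:}A)$ by the number of tensor slots \emph{not} forced to lie in the ideal $A$. The Hochschild differential never increases this count, so one has a filtered complex whose bottom piece is precisely $C_\ast(A)$; on the associated graded, each higher column is assembled (after choosing a linear splitting $B\cong A\oplus C$) from relative bar complexes of $A$ with coefficients built out of the $C$-slots, and the propagated $\operatorname{Tor}$-vanishing makes every such column acyclic. Hence the spectral sequence of the filtration collapses onto $HH_\ast(A)$, which is the desired excision. Carrying out this ``contraction of the H-unital ideal inside the relative complex'' rigorously --- in particular establishing the propagation of $\operatorname{Tor}$-vanishing and bookkeeping the associated graded --- is the substantial part of the argument.

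For $(1)\Rightarrow(2)$ I would compare Connes' periodicity ($SBI$) long exact sequences: there is one for each of $A,B,C$ and one for the relative theory, fitting into a commutative ladder; Hochschild excision identifies the relative Hochschild groups with $HH_\ast(A)$, and the five lemma, applied inductively up this ladder, upgrades that to an identification of the relative cyclic groups with $HC_\ast(A)$, i.e.\ cyclic excision. Finally, for $(2)\Rightarrow(3)$ I would apply cyclic excision to a single, functorially chosen, linearly split test extension with ideal $A$ --- for instance the extension $0\to J(A)\to T(A)\to A\to 0$ arising from the non-unital tensor algebra $T(A)=\bigoplus_{n\geq1}A^{\otimes n}$, whose middle term is quasi-free and so has explicitly computable Hochschild and cyclic homology. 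Forcing the relative cyclic homology of this extension to coincide with $HC_\ast(A)$ pins down $HC_\ast(A)$, hence (via $SBI$) $HH_\ast(A)$, hence the bar homology of $A$, to exactly what vanishing of the bar complex would give; so $A$ is H-unital. Concatenating the three implications proves the theorem.
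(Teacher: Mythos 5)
The paper offers no proof of this statement: Theorem~\ref{wod} is quoted from Wodzicki and used as a black box, so your proposal can only be measured against Wodzicki's argument. Its architecture --- prove $(3)\Rightarrow(1)$ by filtering the relative Hochschild complex by the number of tensor slots not forced into the ideal, pass between $(1)$ and $(2)$ with the $SBI$ ladder and the five lemma, and deduce $(3)$ from excision applied to one well-chosen test extension --- is indeed the right one. But two of the concrete steps you lean on are wrong.

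First, the ``key technical point'' you isolate for $(3)\Rightarrow(1)$, that H-unitality forces $\operatorname{Tor}^{A^+}_{>0}(M,\C)=0$ for \emph{every} unital right $A^+$-module $M$, is false: that statement says $\C=A^+/A$ is flat over $A^+$, which for a principal right ideal $I=aA^+$ reduces to $I\cap A=IA$, i.e.\ $a\in aA$ for every $a\in A$. This fails already for $A=C_c^\infty(\R)$ under convolution (H-unital, by this very paper): $a=a*g$ would give $\hat{a}\,\hat{g}=\hat{a}$, forcing $\hat{g}\equiv 1$ on the dense nonvanishing locus of the entire function $\hat{a}$ and hence everywhere, contradicting Riemann--Lebesgue. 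Concretely, $\operatorname{Tor}_1^{A^+}(A^+/aA^+,\C)\cong(\C a+aA)/aA\neq 0$ for generic $a$. Fortunately the filtration argument does not need such a flatness statement: after a linear splitting $B\cong A\oplus C$, any term of the differential in which an $A$-slot multiplies a $C$-slot lands in $A$ (because $A$ is an ideal) and therefore strictly lowers the filtration, so the associated graded columns are assembled from bar complexes of $A$ tensored with vector spaces, and over a field these are acyclic as soon as the bar complex of $A$ is. The real work is the bookkeeping of the cyclic wrap-around term, not a module-theoretic propagation of $\operatorname{Tor}$-vanishing.

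Second, your test extension for $(2)\Rightarrow(3)$ points the wrong way: in $0\to J(A)\to T(A)\to A\to 0$ the algebra $A$ is the \emph{quotient}, whereas the excision hypothesis for $A$ quantifies only over extensions in which $A$ is the \emph{ideal}; applying excision to that extension would require $J(A)$, not $A$, to have the excision property. The extension that works is the unitalization $0\to A\to A^+\to\C\to 0$: in the normalized relative complex there is a short exact sequence of complexes whose sub is $C_*(A)$ and whose quotient is a degree shift of the bar complex of $A$, so the comparison map $H_*(A)\to H_*(A^+{:}A)$ is an isomorphism precisely when the bar complex is acyclic. Thus excision for this single split extension is already equivalent to H-unitality, and no computation of the homology of a tensor algebra is needed.
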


For a precise explanation of the terminology above, one may see  \cite{Wodzicki}, or Section~1.4 of \cite{Loday}. The key point  is the following: if $I$ is an H-unital algebra, and $B$ contains $I$ as an ideal, then  the short exact sequence of $\C$-algebras
\[ \begin{tikzcd} 
0 \ar[r] & I  \ar[r] & B \ar[r] & B/I \ar[r] & 0
\end{tikzcd} \]
induces  a corresponding long exact sequence in Hochschild/cyclic homology (since $\C$ is a field of characteristic zero, the purity hypothesis in \cite{Wodzicki} is automatically satisfied).

\begin{rmk}
In the present article, all homology theories being considered are  their ``discrete versions'' and  depend only on the underlying algebra. In the discrete case,  the connection between excision and H-unitality is neatly explained in  \cite{Wodzicki}.  On the other hand, the continuous setting has a somewhat more complicated history. The  planned sequel to Wodzicki's paper addressing continuous counterparts to Theorem~\ref{wod} did not appear  (see Remark~8.5~(2)), so that the most general results have sometimes remained folklore, and primarily known to experts. Indeed, the theory has not always been adequate for the most interesting applications. See, for instance,  \cite{Brasselet-Pflaum} and Remark~6.5 therein. Several authors have worked to fill in the gaps in literature (see \cite{Guccione} and \cite{Meyer}) with the consequence that the connection between H-unitality and excision in the continuous setting is now on much firmer footing.
\end{rmk}

Note that, if $A$ is H-unital, then, in particular, the mapping $A\otimes A \to A$ is surjective. That is, H-unital algebras have the weak factorization property. In this article, we prove the following result which may be viewed as a generalization of the Dixmier-Malliavin theorem for Lie groupoids of \cite{Francis[DM]}.

\begin{thm}\label{mainthm1}
For any Lie groupoid $G$, the smooth convolution algebra $C_c^\infty(G)$ is H-unital.
\end{thm}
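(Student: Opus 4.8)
The plan is to upgrade the weak factorization statement of Theorem~\ref{DMprev} into a statement about the acyclicity of the bar complex, using the same ``Dixmier--Malliavin machine'' but applied simultaneously to a chain $a_0 \otimes \cdots \otimes a_n$ rather than to a single element. The key observation is that H-unitality is, by the Künneth-type argument in Wodzicki's paper, detectable locally: one wants to produce, for each cycle $z \in \ker(d' \colon A^{\otimes n} \to A^{\otimes(n-1)})$, an explicit chain $w \in A^{\otimes(n+1)}$ with $d'w = z$. I would first treat the local model, $A = C_c^\infty(\R^q) \rtimes (\text{trivial bundle of groups and an action})$ in a chart adapted to the groupoid structure, and reduce the general case to it by a partition-of-unity/patching argument together with Theorem~\ref{permanence}.

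The heart of the argument is a one-variable lemma generalizing Lemma~\ref{DMlemma}: given finitely many functions $\varphi_1, \dots, \varphi_n \in C_c^\infty(\R)$ supported in a fixed neighbourhood of the origin, one can find $f_0, f_1 \in C_c^\infty(\R)$ (supported near $0$) and coefficients $(a_m)$ decaying fast enough that $\psi_j := \sum_m a_m \varphi_j^{(m)} \in C_c^\infty(\R)$ for every $j$ simultaneously, and such that $\delta_0 = f_0 + \sum_m a_m f_1^{(m)}$. Since the $(c_m)$ in Lemma~\ref{DMlemma} are arbitrary, one simply chooses them to dominate all the finitely many sequences of Sobolev-type norms of the $\varphi_j$ at once; this is immediate. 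With this in hand, given a bar cycle $z = \sum a_0 \otimes \cdots \otimes a_n$, one convolves $\delta_0$ into the first slot, writes $\delta_0 = f_0 + f_1 * (\text{differential operator})$, and absorbs the differential operator into the second tensor slot. Because $z$ is a cycle (so $a_0 a_1 \otimes a_2 \otimes \cdots$ terms telescope correctly), the identity $d'\bigl(f_0 \otimes f_1 \otimes z'\bigr) = \cdots$ — more precisely, a suitable combination of $s_0$-type insertions followed by the factorization — recovers $z$ up to an explicit boundary. The standard trick is: if $h \in A$ acts as a ``unit up to $d'$-boundary'' on the cycle $z$ in the sense that $h \otimes z - (\text{something}) = d'(\cdots)$, then $z = d'(h \otimes z) + (\text{lower complexity})$, and one iterates. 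One has to run this one 1-parameter subgroup at a time, exactly as in the passage from the $G = \R$ case to the general case in \cite{Francis[DM]}, using that $G$ is locally a product of flows transverse to the source foliation.

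I would organize the local computation around the contracting homotopy $s\colon A^{\otimes n} \to A^{\otimes(n+1)}$, $s(z) = h \otimes z$, available when $A$ has a two-sided unit $h$: the defect $d's + sd' - \id$ vanishes. In the nonunital setting one replaces $h$ by the pair $(f_0, f_1)$ and the ``missing'' Dirac mass by the convergent series of derivatives; the series converges in $C_c^\infty$ after it hits the finitely many tensor factors occurring in $z$, which is exactly what the multi-function version of Lemma~\ref{DMlemma} guarantees. Writing $d'$ for the bar differential and keeping careful track of signs, the computation $d'(s z) = z - s(d' z) + (\text{boundary of a strictly simpler chain})$ closes by downward induction on, say, the number of tensor factors not yet ``factored through $f_1$.''

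The main obstacle I anticipate is bookkeeping rather than analysis: making the patching/partition-of-unity step rigorous so that a cycle supported across several charts of a possibly non-Hausdorff $G$ can be decomposed into locally-supported cycles modulo boundaries, and ensuring the finitely-many-functions version of Dixmier--Malliavin is applied uniformly enough that all the auxiliary series converge in $C_c^\infty(G)$ simultaneously. One must also verify that the subgroupoid structure (the interplay of source/target maps with the 1-parameter flows used to factor) does not obstruct iterating the one-variable step — i.e. that after factoring in one flow direction the remaining cycle still lies in an algebra of the same local form. Concretely, the cleanest route is probably: (i) prove the multi-function Lemma~\ref{DMlemma}; (ii) prove H-unitality for $C_c^\infty(\R)$ directly from it by exhibiting the contracting homotopy; (iii) bootstrap to $C_c^\infty(\R^q)$ and to crossed products $C_c^\infty(\R^q) \rtimes \R^p$ by iterating coordinate-by-coordinate (this is where Theorem~\ref{permanence} and a Fubini-type interchange do the work); (iv) globalize via charts, a locally finite partition of unity, and again Theorem~\ref{permanence}. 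The case $J_Z^\infty$ would then follow by running the same local argument with functions vanishing to infinite order along $Z$, which is compatible with all the operations above since differentiation and convolution preserve infinite-order vanishing.
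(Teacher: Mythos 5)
Your overall instinct --- iterate the Dixmier--Malliavin lemma one flow direction at a time, with the coefficients $(a_m)$ chosen small enough to handle finitely many functions simultaneously --- matches the paper. But there are two genuine gaps in how you propose to assemble this into H-unitality.

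First, the globalization step. You propose to patch local results using a partition of unity on $G$ together with Theorem~\ref{permanence}, and to bootstrap from $C_c^\infty(\R)$ to $C_c^\infty(\R^q)$ and crossed products the same way. Theorem~\ref{permanence} concerns extensions $0\to A\to B\to C\to 0$ of algebras, and there is no such extension decomposing $C_c^\infty(G)$ into chart pieces: $C_c^\infty(U)$ for a chart $U\subseteq G$ is not a subalgebra, let alone an ideal, for convolution; likewise $C_c^\infty(\R^{p+q})$ under convolution is not the algebraic tensor product $C_c^\infty(\R^p)\otimes C_c^\infty(\R^q)$ (exactly the caveat in the remark following Theorem~\ref{mainthm1} about completed versus algebraic tensor products), so the ``Fubini-type'' step has no algebra-level content. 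The correct localization is of a different nature: one fixes the finite set $\mathscr{P}$, cuts it up with a partition of unity on the \emph{Hausdorff base} $B$ acting by $\rho\cdot\varphi=(\rho\circ t)\varphi$, and proves the factorization for each piece; no permanence theorem is used in the proof of Theorem~\ref{mainthm1}.

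Second, and more fundamentally, your direct manipulation of bar cycles (``downward induction on the number of tensor factors not yet factored through $f_1$'') is not a proof as it stands, and the ingredient that makes it unnecessary is Wodzicki's criterion, Proposition~\ref{suffcond}: it suffices to produce, for each finite $\mathscr{P}$, a right ideal $A_0\supseteq\mathscr{P}$ and a \emph{right $A$-linear} map $\phi:A_0\to A\otimes A$ splitting multiplication; then $\phi\otimes\id$ is automatically a contracting homotopy on the relevant part of the bar complex, with no cycle-by-cycle bookkeeping. The crux is the right-linearity of $\phi(\varphi)=\sum_i f_i\otimes\Psi_i(\varphi)$, i.e.\ that the operators $\Psi_i$ --- compositions of the power series $P(X)$ in right-invariant vector fields that you ``absorb into the second tensor slot'' --- commute with convolution on the right, $P(X)(\varphi*g)=(P(X)\varphi)*g$. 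This is Corollary~\ref{rightlinearity} and is exactly what makes the interaction with $d'$ close; your proposal never identifies or verifies it. A smaller point: the iterated factorization requires the domains of \emph{compositions} $P(X_{i_1})\cdots P(X_{i_n})$ to contain $\mathscr{P}$, which needs the summability of $\sum c_{m_1}\cdots c_{m_n}a_{m_1+\cdots+m_n+r}$ as in Lemma~\ref{fast enough decay}, not merely domination of finitely many norm sequences.
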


\begin{rmk}
The author would like to thank Xiang Tang for pointing out the similarity of Theorem~\ref{mainthm1} above to Proposition~2 of \cite{Crainic-Moerdijk}. We note, however, that, because the tensor products in this article are algebraic tensor products and the tensor products in \cite{Crainic-Moerdijk} are completed tensor products $\overline\otimes$ satisfying $C_c^\infty(M) \overline\otimes C_c^\infty(N) = C_c^\infty(M \times N)$  for all  smooth manifolds $M$ and $N$, the results are   distinct.
\end{rmk}

The second purpose of this article is to prove the following generalization of Theorem~\ref{idealDM}.

\begin{thm}\label{mainthm2}
For any Lie groupoid $G$, for any closed, invariant subset $Z$ of the unit space of $G$,  the corresponding ideal $J_Z^\infty \subseteq C_c^\infty(G)$ is H-unital.
\end{thm}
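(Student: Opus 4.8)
The plan is to reduce the H-unitality of $J_Z^\infty$ to two already-available ingredients: Theorem~\ref{mainthm1} (H-unitality of $C_c^\infty(G)$ itself) together with the permanence property of Theorem~\ref{permanence}, and a local-to-global argument of the same flavour as the one used to prove Theorem~\ref{mainthm1}. The first reduction is purely formal: if $Z \subseteq Z'$ are two closed invariant subsets, then $J_Z^\infty / J_{Z'}^\infty$ is, up to the obvious identifications, the kernel of a restriction-type map between two ``noncommutative Whitney'' quotients, so by Theorem~\ref{permanence} it suffices to treat the extreme cases; more importantly, the exact sequence $0 \to J_Z^\infty \to C_c^\infty(G) \to C_c^\infty(G)/J_Z^\infty \to 0$ shows that $J_Z^\infty$ is H-unital as soon as we know that the quotient algebra of noncommutative Whitney functions is H-unital, since $C_c^\infty(G)$ is H-unital by Theorem~\ref{mainthm1}. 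So one route is to prove H-unitality of the quotient directly; the other, which I expect to be cleaner, is to attack $J_Z^\infty$ head-on by adapting the bar-complex contraction built for $C_c^\infty(G)$.

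Concretely, I would mimic whatever mechanism underlies the proof of Theorem~\ref{mainthm1}: to show a bar complex is acyclic one produces, for each finite set of elements $a_1,\dots,a_n \in J_Z^\infty$, an element $e \in J_Z^\infty$ acting approximately as a left unit on them in a homological sense — in the Dixmier--Malliavin tradition, one writes each $a_i$ as a convolution $e * b_i$ with $b_i$ still in the ideal. The crucial point is that the factorization procedure inherited from Lemma~\ref{DMlemma}, applied in charts where $G$ looks like a product of one-parameter flows transverse and tangent to $G_Z$, preserves infinite-order vanishing on $G_Z$: the functions $f_0, f_1$ from Lemma~\ref{DMlemma} can be taken supported near the identity, convolution by them does not move support away from $G_Z$, and differentiation and the flow directions used in the groupoid factorization are compatible with the condition of vanishing to infinite order along an invariant, closed set. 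One must handle the directions transverse to $Z$ (where vanishing is genuinely ``Whitney-type'', since $Z$ need not be a submanifold) separately from the directions along the orbit foliation; for the transverse directions the relevant statement is essentially that $I_Z^\infty \subseteq C^\infty(M)$ is stable under the operations in play, which is the content of the Whitney example in the excerpt, while along the orbits one reuses the group-case argument verbatim.

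The main obstacle I anticipate is precisely that $Z$ is only assumed closed and invariant, not a submanifold, so one cannot straighten $G_Z$ by a chart and the ``product of one-parameter subgroups'' decomposition has to be arranged to be transverse to, or tangent to, $Z$ in a way that makes sense without local coordinates adapted to $Z$. The resolution should be that the source/target fibration and the orbit foliation are genuinely smooth regardless of $Z$, so the decomposition of $C_c^\infty(G)$ into flow directions is intrinsic; what one then needs is a bookkeeping lemma to the effect that if $\varphi$ vanishes to infinite order on $G_Z$ then so do all the pieces produced by the factorization (flowing, differentiating, convolving by a bump near the units, and the finite partition-of-unity sums used in the non-Hausdorff case and to globalize). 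This is a routine-but-careful check once the right formulation is isolated, and it is where the extra ``amount of care'' promised in the introduction for non-Hausdorff groupoids and non-submanifold $Z$ gets spent.

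Finally, I would package the permanence consequences: once $J_Z^\infty$ is H-unital, Theorem~\ref{permanence} applied to $0 \to J_Z^\infty \to C_c^\infty(G) \to C_c^\infty(G)/J_Z^\infty \to 0$ gives H-unitality of the noncommutative Whitney quotient (using Theorem~\ref{mainthm1}), and applied to the relative sequence $0 \to J_Z^\infty/J_{Z'}^\infty \to C_c^\infty(G)/J_{Z'}^\infty \to C_c^\infty(G)/J_Z^\infty \to 0$ gives H-unitality of the relative ideals; the excision statements in Hochschild and cyclic homology are then immediate from Theorem~\ref{wod}.
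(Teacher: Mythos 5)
There is a genuine gap, and it sits exactly where the paper has to do new work. Your head-on strategy is to rerun the Dixmier--Malliavin factorization underlying Theorem~\ref{mainthm1} and check that everything stays flat on $G_Z$. That check succeeds for the \emph{right} tensor factors: the operators $P(X)$ built from right-invariant vector fields do not increase supports and preserve infinite-order vanishing, so the outputs $\Psi_i(\varphi)$ land in $J_Z^\infty$. But it cannot succeed for the \emph{left} factors. The functions $f_0,f_1$ of Lemma~\ref{DMlemma} satisfy $\delta_0=f_0+\sum_m a_m f_1^{(m)}$ and therefore cannot vanish near the origin; the resulting convolvers $\theta_W(f_{i_1}\otimes\cdots\otimes f_{i_k}\otimes\rho)$ are bump functions concentrated near the unit space and emphatically do not lie in $J_Z^\infty$. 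Since Proposition~\ref{suffcond} applied to $A=J_Z^\infty$ requires $\phi$ to take values in $J_Z^\infty\otimes J_Z^\infty$, the element ``$e\in J_Z^\infty$'' you invoke never gets constructed, and the remark that ``$I_Z^\infty$ is stable under the operations in play'' in the transverse directions does not supply it. The paper's resolution is an extra, genuinely commutative ingredient (Section~6): a ``with parameters'' flat-function division theorem (Theorem~\ref{subfac}, proved via regularized distance functions and Lemma~\ref{slowdecay}) producing a single $\rho\in C^\infty(B)$, flat on $Z$ and positive off $Z$, with $\varphi_i=\rho\cdot\psi_i$ and $\psi_i\in J_Z^\infty$ for the whole finite set at once. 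One then runs the factorization of Corollary~\ref{Gfactor} on $C_c^\infty(G)$ and transfers $\rho$ across the convolution, $g_i*(\rho\cdot\Psi_i(\varphi))=(g_i\cdot\rho)*\Psi_i(\varphi)$, so that the new left factors $f_i=g_i\cdot\rho$ do lie in $J_Z^\infty$. Nothing in your outline plays this role, and without it the construction required by Proposition~\ref{suffcond} for the algebra $J_Z^\infty$ does not close up.

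Separately, your ``purely formal'' first reduction misapplies Theorem~\ref{permanence}: that theorem assumes the \emph{ideal} is H-unital and transfers H-unitality between the middle term and the quotient. It does not allow you to deduce H-unitality of $J_Z^\infty$ from that of $C_c^\infty(G)$ and of the Whitney quotient --- H-unitality does not pass to ideals in general, and that failure is the whole point of the excision problem. You do not ultimately rely on this step, but as stated it inverts the logical order of the paper, where H-unitality of $J_Z^\infty$ is proved first and the statements about the quotients $\mathcal{E}_c^\infty(G_Z)$ and the relative ideals $J_{Z,Y}^\infty$ are the corollaries.
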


The main practical consequence of Theorem~\ref{mainthm2} is that H-unitality of $J_Z^\infty$ combined with Theorem~\ref{wod} yields the following important corollary.

\begin{cor}\label{excision1}
For any Lie groupoid $G$, for any closed, invariant subset $Z$ of the unit space of $G$, the exact sequence
\[ \begin{tikzcd}
0 \ar[r] & J_Z^\infty \ar[r] & C_c^\infty(G) \ar[r] & C_c^\infty(G)/J_Z^\infty \ar[r] & 0
\end{tikzcd} \]
induces corresponding long exact sequences in Hochschild and cyclic homology.
\end{cor}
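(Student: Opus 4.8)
The plan is to deduce this purely formally from Theorem~\ref{mainthm2} and Theorem~\ref{wod}, so the real content is an explanation of why the implication works rather than a new argument. First I would note that $J_Z^\infty$ is by construction a two-sided ideal of $C_c^\infty(G)$: it consists of those $f \in C_c^\infty(G)$ vanishing to infinite order along the closed subgroupoid $G_Z$, and since $Z$ is invariant, $G_Z$ is a closed subgroupoid, so the infinite-order vanishing condition is preserved under left and right convolution by arbitrary elements of $C_c^\infty(G)$ (the support of a convolution product is contained in the product of the supports, and the derivatives of a convolution are governed by those of the factors). Hence
\[
\begin{tikzcd}
0 \ar[r] & J_Z^\infty \ar[r] & C_c^\infty(G) \ar[r] & C_c^\infty(G)/J_Z^\infty \ar[r] & 0
\end{tikzcd}
\]
is genuinely a short exact sequence of $\C$-algebras.

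Next I would invoke Theorem~\ref{mainthm2} to conclude that the ideal $J_Z^\infty$ is H-unital. By Theorem~\ref{wod}, H-unitality of $J_Z^\infty$ is equivalent to $J_Z^\infty$ having the excision property both for Hochschild homology and for cyclic homology. Since the ground field is $\C$, which has characteristic zero, the purity hypothesis appearing in Wodzicki's excision theorem is automatically satisfied, so there is nothing further to check: the excision property for $J_Z^\infty$ means precisely that, whenever $J_Z^\infty$ sits as an ideal in an ambient algebra, the resulting short exact sequence of algebras induces a long exact sequence in Hochschild homology, and likewise in cyclic homology. Applying this with the ambient algebra $C_c^\infty(G)$ and the quotient $C_c^\infty(G)/J_Z^\infty$ gives exactly the asserted long exact sequences.

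There is no substantive obstacle here; the corollary is a direct application of the machinery already assembled. The only minor points worth spelling out are (i) that $J_Z^\infty$ really is an ideal, as noted above, and (ii) that the version of Theorem~\ref{wod} being used is the discrete one — all the tensor products and homology theories in play are the purely algebraic ones, so no issues of topological tensor products, completions, or continuous Hochschild homology arise, and the clean correspondence between H-unitality and excision from \cite{Wodzicki} applies verbatim. One could additionally remark that, by Theorem~\ref{permanence}, the quotient $C_c^\infty(G)/J_Z^\infty$ is itself H-unital (being the quotient of the H-unital algebra $C_c^\infty(G)$, using Theorem~\ref{mainthm1}, by the H-unital ideal $J_Z^\infty$), so that all three terms of the sequence have the excision property; but this is not needed for the statement as given.
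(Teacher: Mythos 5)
Your proposal is correct and follows exactly the paper's route: Theorem~\ref{mainthm2} gives H-unitality of $J_Z^\infty$, and Theorem~\ref{wod} converts this into the excision property, which applied to the ambient algebra $C_c^\infty(G)$ yields the long exact sequences. The only point you gloss slightly is the verification that $J_Z^\infty$ is a two-sided ideal, which the paper establishes via the proposition in Section~\ref{idealsubection} showing that convolution adds vanishing orders ($J_Z^p * J_Z^q \subseteq J_Z^{p+q}$), but your sketch of that fact is in the right spirit and the overall argument matches the paper's.
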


It is hoped that the above will lead to an improved  understanding of localization around invariant subsets in  calculations of the cyclic and Hochschild homology of convolution algebras of Lie groupoids. Examples of calculations utilizing this excision principle will be considered elsewhere.  Such calculations fall squarely within Connes' noncommutative geometry program. One may see \cite{Pflaum-Posthuma-Tang} for recent progress in this area.

Applying Theorem~\ref{permanence},  the quotient
\[ \mathcal{E}_c^\infty(G_Z) \coloneqq C_c^\infty(G)/J_Z^\infty \]
is also H-unital. This  is exactly  the algebra of compactly-supported Whitney functions on the closed subgroupoid $G_Z \subseteq G$ of arrows with source and target in $Z$ (the notation $ \mathcal{E}_c^\infty(G_Z)$ suppresses the dependence on the embedding of $G_Z$ in $G$).  It should also be emphasized that, here, the product on $\mathcal{E}_c^\infty(G_Z)$ is the noncommutative one descending from convolution, rather than the usual (commutative)  product of Whitney functions. 

Given a nested pair $Z \subseteq Y$   of closed, invariant subsets of the unit space of $G$, one may also consider the ideal
\[ J_{Z,Y}^\infty \coloneqq \ker(\mathcal{E}_c^\infty(G_Y) \to \mathcal{E}_c^\infty(G_Z)). \] 
Because $J_{Z,Y}^\infty \cong J_Z^\infty/J_Y^\infty$, Theorem~\ref{mainthm2}  and Theorem~\ref{permanence} imply that  $J_{Z,Y}^\infty$ is H-unital as well. Thus, using Theorem~\ref{wod}, we obtain following relative analog of Corollary~\ref{excision1}.
\begin{cor}\label{excisionrel}
For any Lie groupoid $G$, for any nested pair $Z\subseteq Y$ of closed, invariant subsets  of the unit space of $G$,  the short exact sequence
\[ \begin{tikzcd}
0 \ar[r] & J_{Z,Y}^\infty \ar[r] & \mathcal{E}_c^\infty(G_Y) \ar[r] & \mathcal{E}_c^\infty(G_Z) \ar[r] & 0
\end{tikzcd} \]
induces corresponding long exact sequences in Hochschild and cyclic homology.
\end{cor}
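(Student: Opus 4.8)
The plan is to reduce the statement to Theorem~\ref{mainthm2} together with the two formal tools already recorded above: the permanence property Theorem~\ref{permanence} and Wodzicki's equivalence Theorem~\ref{wod}. No new analysis on the groupoid is needed; the entire analytic content has been absorbed into Theorem~\ref{mainthm2}, and what remains is homological bookkeeping. The first step is to pin down the identification $J_{Z,Y}^\infty \cong J_Z^\infty/J_Y^\infty$ claimed in the text. Since $Z \subseteq Y$ we have $G_Z \subseteq G_Y$ as closed subgroupoids of $G$, so a function vanishing to infinite order along $G_Y$ a fortiori vanishes to infinite order along the smaller set $G_Z$; hence $J_Y^\infty \subseteq J_Z^\infty$. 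Consequently the restriction map $\mathcal{E}_c^\infty(G_Y) = C_c^\infty(G)/J_Y^\infty \to C_c^\infty(G)/J_Z^\infty = \mathcal{E}_c^\infty(G_Z)$ is simply the canonical surjection between these two quotient algebras, and the third isomorphism theorem identifies its kernel, which is $J_{Z,Y}^\infty$ by definition, with $J_Z^\infty/J_Y^\infty$ as $\C$-algebras (noncommutative and nonunital, but that is irrelevant to the argument).

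Next I would apply Theorem~\ref{mainthm2} to both $Z$ and $Y$, obtaining that $J_Z^\infty$ and $J_Y^\infty$ are H-unital. Feeding the short exact sequence of $\C$-algebras
\[ 0 \longrightarrow J_Y^\infty \longrightarrow J_Z^\infty \longrightarrow J_Z^\infty/J_Y^\infty \longrightarrow 0 \]
into Theorem~\ref{permanence}: the kernel term $J_Y^\infty$ is H-unital, so the middle term is H-unital if and only if the quotient term is; since $J_Z^\infty$ is H-unital we conclude that $J_Z^\infty/J_Y^\infty \cong J_{Z,Y}^\infty$ is H-unital. (This is exactly the instance of Theorem~\ref{permanence} invoked, just before the corollary, to deduce H-unitality of $J_{Z,Y}^\infty$.)

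Finally, $J_{Z,Y}^\infty$ is a two-sided ideal of $\mathcal{E}_c^\infty(G_Y)$, so Theorem~\ref{wod} (the equivalence of H-unitality with the excision property for Hochschild and cyclic homology, the purity hypothesis being automatic over the characteristic-zero field $\C$) shows that the short exact sequence $0 \to J_{Z,Y}^\infty \to \mathcal{E}_c^\infty(G_Y) \to \mathcal{E}_c^\infty(G_Z) \to 0$ induces the asserted long exact sequences. As for the main obstacle: there is genuinely no hard step here once Theorem~\ref{mainthm2} is available, and I would expect the only point requiring any care to be purely formal --- namely confirming that $J_{Z,Y}^\infty$ really is the quotient $J_Z^\infty/J_Y^\infty$ on the nose, and, in the non-Hausdorff case, that the span description of $C_c^\infty(G)$ is compatible with passing to infinite-order vanishing ideals and their quotients. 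Both are routine, so the proof should be short.
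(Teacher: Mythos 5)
Your proposal is correct and follows exactly the paper's own argument: identify $J_{Z,Y}^\infty$ with $J_Z^\infty/J_Y^\infty$, deduce its H-unitality from Theorem~\ref{mainthm2} and the permanence property of Theorem~\ref{permanence} applied to $0 \to J_Y^\infty \to J_Z^\infty \to J_Z^\infty/J_Y^\infty \to 0$, and conclude via Theorem~\ref{wod}. The only difference is that you spell out the inclusion $J_Y^\infty \subseteq J_Z^\infty$ and the third-isomorphism-theorem step, which the paper leaves implicit.
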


From a certain point of view, Corollary~\ref{excisionrel} is more compelling than  Corollary~\ref{excision1} because algebras of Whitney functions are  somewhat more algebraic objects, making consideration of their discrete homology more natural.

We now discuss the criterion that will be used to establish H-unitality. As mentioned above, if an algebra $A$ is unital, then $x \mapsto 1 \otimes x : A^{\otimes n}\to A^{\otimes (n+1)}$ defines a contracting homotopy for the bar complex. More generally, if there exists a $\C$-linear map $\phi : A \to A \otimes A$ which is right $A$-linear (where the  right $A$-module structure on $A\otimes A$ is such that  $(a \otimes b)\cdot c = a\otimes(bc)$ for  $a,b,c \in A$) and makes the diagram
\[ \begin{tikzcd}
A \ar[rd,equals] \ar[r,"{\phi}"] & A \otimes A \ar[d,"\text{multiplication}"] \\
 & A
\end{tikzcd} \]
commutative, then a simple calculation shows that $\phi \otimes \id  : A^{\otimes n}\to A^{\otimes (n+1)}$ gives a contracting homotopy for the bar complex. In fact, because any cycle only involves finitely any elements of $A$, it is not actually necessary to have a globally-defined map $\phi$; one can get by with a family of locally-defined maps. This is formalized in the  following result 
from  \cite{Wodzicki} (specialized to the case $k=\C$) and we refer the reader to that source for the proof.

\begin{propn}[4.1 Proposition, \cite{Wodzicki}]\label{suffcond}
Let $A$ be an associative $\C$-algebra. Suppose that, for every finite set $\mathscr{P} \subseteq A$, there exists a right ideal $A_0 \subseteq A$ such that $\mathscr{P} \subseteq A_0$ and a $\C$-linear map  $\phi : A_0 \to A \otimes A$ such that:
\begin{enumerate}
\item $\phi$ is a map of right $A$-modules (the right $A$-module structure on $A\otimes A$ is such that $(a \otimes b)\cdot c = a\otimes(bc)$ for $a,b,c \in A$).
\item The diagram \[ \begin{tikzcd}
A_0 \ar[rd,"\text{inclusion}"'] \ar[r,"{\phi}"] & A \otimes A \ar[d,"\text{multiplication}"] \\
 & A
\end{tikzcd} \]
is commutative.
\end{enumerate}
\end{propn}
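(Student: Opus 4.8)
\emph{Proof proposal.} The plan is to use the maps $\phi$ furnished by the hypothesis as a \emph{local} substitute for the contracting homotopy $x \mapsto 1 \otimes x$ that trivialises the bar complex of a unital algebra, exploiting that any chain involves only finitely many elements of $A$ together with the exactness of $\otimes_{\C}$. So fix $n \geq 1$ and a cycle $z \in A^{\otimes n}$ with $d'z = 0$; we must write $z$ as a boundary. When $n = 1$ every $z = \sum_k a^{(k)} \in A$ is a cycle, and applying the hypothesis to $\mathscr{P} = \{a^{(k)}\}$ yields a map $\phi$ with $z = m(\phi(z))$ by condition (2), where $m$ denotes multiplication; since $m\phi(z) = d'(\phi(z))$, exactness at $A$ (i.e.\ weak factorisation) follows. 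For $n \geq 2$ write $z = \sum_k a^{(k)} \otimes \omega^{(k)}$ with $a^{(k)} \in A$, $\omega^{(k)} \in A^{\otimes(n-1)}$, set $\mathscr{P} = \{a^{(k)} : k\}$, and take the right ideal $A_0 \supseteq \mathscr{P}$ and the map $\phi \colon A_0 \to A \otimes A$ supplied by the hypothesis.

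Next I would pass to a subcomplex adapted to $A_0$. Because $A_0$ is a right ideal, $d'$ carries $A_0 \otimes A^{\otimes(j-1)}$ into $A_0 \otimes A^{\otimes(j-2)}$ (the leading product of a generator again lies in $A_0$), so these spaces form a subcomplex $D_\bullet$ of the bar complex; and because $\C$ is a field, tensoring the inclusion $A_0 \hookrightarrow A$ with the free module $A^{\otimes(j-1)}$ shows the natural maps $\pi_j \colon A_0 \otimes A^{\otimes(j-1)} \to A^{\otimes j}$ are \emph{injective} and assemble into a chain map. Define $\tilde{s}$ on $D_\bullet$ by $\tilde{s} = \phi \otimes \id$ on $A_0 \otimes A^{\otimes(j-1)}$, landing in $A^{\otimes(j+1)}$. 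The heart of the argument is then the homotopy identity
\[
  d'\tilde{s} + \tilde{s}\,d' = \pi_\bullet,
\]
which I would verify by the standard ``extra degeneracy'' telescoping: condition (2) makes the leading term of $d'\tilde{s}$ collapse to $\pi$, right $A$-linearity of $\phi$ (condition (1)) makes $\tilde{s}d'$ cancel the remaining interior terms of $d'\tilde{s}$, and the right-ideal property keeps every product that appears inside $D_\bullet$.

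Finally I would conclude as follows. Each $a^{(k)}$ lies in $A_0$, so the cycle $z$ lies in the image of $\pi_n$; injectivity of $\pi_n$ gives a unique preimage $\hat{z} \in D_n$. Then $\pi_{n-1}(d'\hat{z}) = d'(\pi_n \hat{z}) = d'z = 0$, whence $d'\hat{z} = 0$ by injectivity of $\pi_{n-1}$; the homotopy identity now gives $z = \pi_n \hat{z} = d'(\tilde{s}\hat{z}) + \tilde{s}(d'\hat{z}) = d'(\tilde{s}\hat{z})$, so $z$ is a boundary. Since $z$ and $n$ were arbitrary, the bar complex is acyclic and $A$ is H-unital.

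The step I expect to be the real sticking point is not any single computation but the bookkeeping that makes the \emph{local} homotopy legitimate: $\tilde{s}$ exists only on $D_\bullet$, not on all of $A^{\otimes\bullet}$, so one must be sure that lifting a cycle from $A^{\otimes n}$ to $A_0 \otimes A^{\otimes(n-1)}$ is unambiguous and that $d'\hat{z}$ genuinely vanishes rather than merely mapping to $0$ in $A^{\otimes(n-1)}$. Both rest on injectivity of the maps $\pi_j$, i.e.\ on exactness of $\otimes_\C$, which is precisely where the ground field being a field (here $\C$) is used, and what a ``purity'' hypothesis would replace over a general ground ring.
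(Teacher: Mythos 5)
Your proof is correct and is precisely the argument the paper sketches just before the statement (the global homotopy $\phi\otimes\id$, localized to the subcomplex $A_0\otimes A^{\otimes(j-1)}$ using that $A_0$ is a right ideal and that $\otimes_{\C}$ preserves injectivity); the paper itself defers the details to Wodzicki. No gaps: the homotopy identity $d'\tilde{s}+\tilde{s}d'=\pi$ checks out exactly as you describe, with condition (2) producing the leading term, condition (1) cancelling the second term of $d'\tilde{s}$ against the first term of $\tilde{s}d'$, and the right-ideal property keeping $a_0a_1$ in the domain of $\phi$.
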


One straightforward consequence of Proposition~\ref{suffcond} is H-unitality of  ``locally unital'' algebras;  if   every finite subset of $A$ admits a common left unit, then $A$ is H-unital. While the latter statement suffices for many  algebras of interest   in noncommutative geometry, the algebras considered here are not locally unital.  Already, the convolution algebra $C_c^\infty(\R)$ is not locally unital (under Fourier transform, it is an algebra of analytic functions under pointwise multiplication). Nonetheless, our proofs of H-unitality will  pass through the sufficient condition of Proposition~\ref{suffcond}. As an instructive example, we establish H-unitality for $C_c^\infty(\R)$ below.

\begin{ex}
Let $X=\frac{d}{dx}$, considered as an operator on $C_c^\infty(\R)$. 
Given a formal series $P(z) = \sum_{m\geq 0 } a_m z^m \in \C[[z]]$, one may define $P(X) : \dom(P(X)) \to C_c^\infty(\R)$ by 
\begin{align*}
\dom(P(X)) &= \{ \varphi \in C_c^\infty(\R) : {\textstyle\sum_{m \geq 0}} |a_m| \left\| X^{m+r} \varphi   \right\| < \infty \text{ for all } r \geq 0 \} \\
P(X) \varphi &= \sum_{m \geq 0 } a_m X^m\varphi,
\end{align*}
where $\|\cdot\|$ denotes the uniform norm. One may check that $\dom(P(X))$ is an ideal in $C_c^\infty(\R)$.   Given any finite subset $\mathscr{P} \subseteq C_c^\infty(\R)$, one may use Lemma~\ref{DMlemma} to find a representation $\delta_0 = f_0 + \sum_{m=0}^\infty a_m f_1^{(m)}$ of the Dirac measure  in terms of $f_0,f_1 \in C_c^\infty(\R)$ such that $P(z) = \sum_{m \geq 0} a_m z^m$ has $\mathscr{P} \subseteq \dom(P(X))$. Then, the map  
\[ \varphi \mapsto  f_0 \otimes \varphi + f_1 \otimes P(X) \varphi : \dom(P(X)) \to C_c^\infty(\R) \otimes C_c^\infty(\R) \]
satisfies the conditions of Proposition~\ref{suffcond} by the same calculation given below  the statement of Lemma~\ref{DMlemma}, so  $C_c^\infty(\R)$ is H-unital by Proposition~\ref{suffcond}.
\end{ex}

We give a brief summary of the contents of this article. Section~2 reviews  certain aspects of the theory of non-Hausdorff smooth manifolds. A number of examples are included with the intention being to clarify where complications can arise, especially as pertains to supports of functions and flows of vector fields. Section~3 is a technical section devoted to domains of operators obtained by taking power series in a fixed set of  vector fields. The goal here is to confirm that, by taking series coefficients to be sufficiently small,  domains of such operators can be made to contain any finite set of bump functions. In Section~4, we establish conventions for Lie groupoids and smooth actions of Lie groupoids. Section~5 is concerned with our first main result, the H-unitality of the smooth convolution algebras of groupoids. Section~6 is concerned with factorization  with respect to pointwise multiplication  of smooth functions vanishing to infinite order on a closed set, relative to a submersion. Section~7 contains our second main result, the H-unitality of infinite order vanishing ideals in smooth convolution algebras of groupoids.

\section{Non-Hausdorff smooth manifolds}

It is frequently necessary  in noncommutative geometry to work with non-Hausdorff manifolds. In this section, we highlight some of the complications that can arise and review the standard workarounds. Readers who are only interested in  Hausdorff Lie groupoids may safely ignore this section. Readers who are already  well-versed in the techniques needed to deal with non-Hausdorff Lie groupoids may also prefer to skip this material.

In this article, a \textbf{smooth manifold} refers to a second-countable topological space $M$ equipped with a  smooth atlas (more precisely, an equivalence class of smooth atlases or, alternatively, a maximal smooth atlas) of some fixed dimension.  The topology of $M$ is not required to be Hausdorff. Of course, $M$ is locally Hausdorff, being locally Euclidean.

\begin{rmk}Actually, it would do  little harm to drop the assumption of second-countablity as well, provided we at least assume that every \emph{component} of $M$ is second-countable. This amounts to working with possibly uncountable disjoint unions of second countable manifolds.
\end{rmk}

Much  of the basic theory of smooth manifolds goes  through unchanged without the Hausdorff hypothesis. For example,  one can talk about smooth vector fields on and smooth maps between non-Hausdorff smooth manifolds by requiring smoothness in every chart. Many of the  issues that do arise are due to the nonexistence of partitions of unity. Ultimately, this stems from an undersupply of scalar-valued functions that look smooth in \emph{every} chart. To some extent, it is possible to define one's way around this issue by modifying the definition of bump functions. The simple definition below is due to Connes. More sophisticated approaches with better functorial properties are also possible. See the unpublished manuscript \cite{Crainc-Moerdijk}.

\begin{defn}\label{nonHausbump}
Let $M$ be a possibly non-Hausdorff smooth manifold. Then, $C_c^\infty(M)$ denotes the linear span of all functions $\varphi : M \to \C$ that are given as the extension by zero of a function of the form $f \circ \chi : U \to \C$ where $U \subseteq M$ is open, $\chi : U \to \R^d$ is a diffeomorphism, and $f \in C_c^\infty(\R^d)$.
\end{defn}

If $M$ is Hausdorff, the usual meaning of $C_c^\infty(M)$ is recovered. On the other hand, if $M$ is non-Hausdorff, the  notation is $C_c^\infty(M)$ is misleading in several ways:
\begin{enumerate}[(i)]
\item A function $\varphi \in C_c^\infty(M)$ need not be  smooth in every chart (or even continuous). \item The support of $\varphi \in C_c^\infty(M)$, defined in the usual way as the complement of the largest open set where $\varphi$ is zero (equivalently, the closure of the nonvanishing locus of $\varphi$), need not be compact. 
\item $C_c^\infty(M)$ need not be closed under the operation of  pointwise product.
\end{enumerate}

\begin{ex}\label{nonHausex}
Let $M=\Big((-\infty,0) \times \{0\}\Big) \cup \bigcup_{n \in \Z} \Big([0,\infty) \times \{n\}\Big)$, 
with smooth manifold structure determined by the atlas $\{\chi_n :U_n \to \R\}_{n \in \Z}$ defined by
\begin{align*}
U_n  = \left((-\infty,0) \times \{0\}\right) \cup \left([0,\infty) \times \{n\} \right) &&
\chi_n = \mathrm{proj}_1|_{U_n} && n \in \Z.
\end{align*}
Choose some $f \in C_c^\infty(\R)$ with $\mathrm{supp}(f)=[-1,1]$ and define $\varphi_n$ to be the extension by zero to all of $M$ of $f \circ \chi_n$, so that $\varphi_n \in C_c^\infty(M)$ by definition. Then,
\begin{enumerate}[(i)]
\item The restriction of $\varphi_m$ to $U_n$ is not smooth if $m \neq n$.
\item The support  of $\varphi_0$ is $\left([-1,1] \times \{0 \} \right) \cup \{ (0,n) : n \in \Z \}$, which is not compact.
\item The pointwise product of $\varphi_m$ and $\varphi_n$ does not belong to $C_c^\infty(M)$ if $m \neq n$.
\end{enumerate}
\begin{center}
\begin{tikzpicture}[scale=.5]
\draw (-5,0)-- (5,0);
\draw (0,1)-- (5,1);
\draw (0,2)-- (5,2);
\draw (0,-1)-- (5,-1);
\draw (0,-2)-- (5,-2);
\node at (0,0) [circle,fill,inner sep=1.5pt]{};
\node at (0,1) [circle,fill,inner sep=1.5pt]{};
\node at (0,2) [circle,fill,inner sep=1.5pt]{};
\node at (0,-1) [circle,fill,inner sep=1.5pt]{};
\node at (0,-2) [circle,fill,inner sep=1.5pt]{};
\node at (2.5,3) {$\vdots$};
\node at (2.5,-3) {$\vdots$};
\end{tikzpicture}
\end{center}
\end{ex}

Note as well that the space in the example above does arise naturally in geometry. For example, foliate the lower half of the cylinder $S^1 \times \R$ into circles and the upper half into spirals so that there is one-sided holonomy at the equator $S^1 \times \{0\}$. The restriction of the holonomy groupoid of this foliation to a vertical line  transversal is  diffeomorphic to $M$.

Although it can occur that $\supp(\varphi) \coloneqq \overline{ \varphi^{-1}(\C^\times)}$ is noncompact for $\varphi \in C_c^\infty(M)$ when $M$ is noncompact, we do at least have the following.

\begin{propn}\label{pointset}
Let $\tau:M \to B$ be a smooth map of smooth manifolds, where $B$ is Hausdorff and $M$ is possibly non-Hausdorff. Then, for all $\varphi \in C_c^\infty(M)$, we have that $\tau(\supp(\varphi))$ is compact.
\end{propn}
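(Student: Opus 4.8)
The plan is to reduce the statement to a local, and hence Hausdorff, computation by writing $\varphi$ as a finite sum of the basic functions from Definition~\ref{nonHausbump}. By linearity of support in the sense that $\supp(\varphi_1 + \cdots + \varphi_k) \subseteq \supp(\varphi_1) \cup \cdots \cup \supp(\varphi_k)$, and since a finite union of compact sets is compact, it suffices to prove the claim when $\varphi$ itself is the extension by zero of some $f \circ \chi$, where $\chi : U \to \R^d$ is a diffeomorphism of an open set $U \subseteq M$ onto $\R^d$ (or an open subset thereof) and $f \in C_c^\infty(\R^d)$.

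For such a $\varphi$, the key observation is that, although $\supp(\varphi)$ computed in $M$ may be strictly larger than the closure of $\varphi^{-1}(\C^\times)$ taken inside $U$, the extra points all lie in the frontier $\overline{U} \setminus U$ and are limits of points of $K \coloneqq \chi^{-1}(\supp f) \subseteq U$, which is compact (being homeomorphic to $\supp f$). So I would argue: $\supp(\varphi) \subseteq \overline{K}$, the closure taken in $M$. Now $\tau$ is continuous, so $\tau(\overline{K}) \subseteq \overline{\tau(K)}$. Since $K$ is compact and $\tau$ continuous, $\tau(K)$ is compact; since $B$ is Hausdorff, $\tau(K)$ is closed, so $\overline{\tau(K)} = \tau(K)$. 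Therefore $\tau(\supp(\varphi)) \subseteq \tau(K)$, and it remains only to note that $\tau(\supp(\varphi))$ is a closed subset (support is closed by definition) of the compact set $\tau(K)$ living in the Hausdorff space $B$; hence $\tau(\supp(\varphi))$ is compact. Actually one does not even need closedness of $\tau(\supp(\varphi))$: a subset of a compact Hausdorff space need not be compact, so I do want to keep the observation that $\tau(\supp(\varphi))$ is closed in $B$, which follows because $\supp(\varphi)$ is closed in $M$ — wait, that is not enough either since $\tau$ need not be closed. The clean route is: $\tau(\supp(\varphi))$ is closed \emph{relative to} $\tau(K)$? No. Better: just show $\tau(\supp\varphi)$ is itself the continuous image of a compact set. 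Indeed $\supp(\varphi)$, being a closed subset of the compact set $\overline{K}$ — but is $\overline{K}$ compact? That is exactly the subtlety, since $M$ is non-Hausdorff.

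So the main obstacle, and the step deserving real care, is establishing that the relevant closed set is compact despite non-Hausdorffness. The resolution is that $\chi^{-1}(\supp f)$ is compact as a subspace of $U$ (hence of $M$), and a \emph{compact} subset of any topological space maps to a compact set under a continuous map — no Hausdorff hypothesis on the source is needed for that. The only place Hausdorffness of $B$ enters is to conclude that the resulting compact subset $\tau(K) \subseteq B$ is closed, and then that $\tau(\supp\varphi)$, being contained in $\tau(K)$ and equal to $\tau$ of a subset of $\overline{K} \cap \supp\varphi$... I will instead sidestep the closure issue entirely: since $\varphi$ vanishes outside $K = \chi^{-1}(\supp f)$ and $K$ is closed in $M$ (closed subsets of charts that are compact are closed in the ambient space, even non-Hausdorff — because compact sets are closed in locally Hausdorff second-countable, or simply because $K$ compact implies $K$ closed whenever points are closed, which holds here as $M$ is locally Euclidean hence $T_1$), we get $\varphi^{-1}(\C^\times) \subseteq K$, so $\supp(\varphi) = \overline{\varphi^{-1}(\C^\times)} \subseteq \overline{K} = K$. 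Thus $\tau(\supp\varphi) \subseteq \tau(K)$, and $\tau(\supp\varphi)$ is closed in $B$ because it equals $\tau(\overline{\varphi^{-1}(\C^\times)})$... hmm. The genuinely safe conclusion: $\supp(\varphi)$ is a closed subset of the compact set $K$, hence compact; therefore $\tau(\supp\varphi)$ is compact as the continuous image of a compact set. This needs $K$ compact (true) and $\supp\varphi \subseteq K$ with $\supp\varphi$ closed in $M$ hence closed in $K$ hence compact. The one fact to pin down carefully is precisely that $\supp\varphi \subseteq K$, i.e. that $K$ is closed in $M$; I will prove this by noting $K$ is compact and $M$ is $T_1$ (being locally Euclidean), and in a $T_1$ space... no, compact sets need not be closed in $T_1$ spaces. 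So I must argue directly: if $x \in M \setminus U$, then since the topology of $M$ restricted to $U \cup \{x\}$ — rather, use that $M$ is locally Hausdorff, pick a Hausdorff chart around $x$; within it $K$ is closed, giving a neighborhood of $x$ missing $K$. Hence $K$ is closed in $M$. This local-Hausdorffness argument is the technical heart, and I would present it as a short lemma-free paragraph inside the proof.
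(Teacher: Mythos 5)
Your final argument rests on a false claim: that $K=\chi^{-1}(\supp f)$ is closed in $M$, hence that $\supp\varphi\subseteq K$ and $\supp\varphi$ is compact. Example~\ref{nonHausex} of the paper is a direct counterexample, already for a single basic function: there $K=\chi_0^{-1}([-1,1])=[-1,1]\times\{0\}$, yet $\supp(\varphi_0)=\left([-1,1]\times\{0\}\right)\cup\{(0,n):n\in\Z\}$, which strictly contains $K$ and is not compact. Your local-Hausdorffness repair does not work: for $x\in M\setminus U$ and a Hausdorff chart $V$ around $x$, the set $K\cap V$ is in general neither compact nor closed in $V$ (in the example, $K\cap U_1=[-1,0)\times\{0\}$ accumulates at $(0,1)$), so you cannot produce a neighbourhood of $x$ missing $K$. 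Compactness of $K$ only forces $K$ to be closed in a Hausdorff space \emph{containing all of} $K$, which $V$ is not. A secondary issue: your opening reduction to one summand is also not airtight, because $\tau(\supp(\sum_i\varphi_i))$ is merely a \emph{subset} of the compact set $\bigcup_i\tau(\supp\varphi_i)$, and subsets of compact sets need not be compact; you would additionally need $\tau(\supp\varphi)$ closed in $B$, which is not something you have established.

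The correct route — which you brush against midway ("$\tau(\supp\varphi)\subseteq\overline{\tau(K)}=\tau(K)$") and then abandon — is to accept that $K$ is compact but possibly not closed, and to work with $S\coloneqq\varphi^{-1}(\C^\times)\subseteq K$ rather than with $\supp\varphi$ directly. Since $\overline S\cap K$ is a closed subset of the compact set $K$, it is compact, so $\tau(\overline S\cap K)$ is compact and hence closed in the Hausdorff space $B$. The chain
\[
\tau\left(\overline S\right)\subseteq\overline{\tau(S)}\subseteq\overline{\tau\left(\overline S\cap K\right)}=\tau\left(\overline S\cap K\right)\subseteq\tau\left(\overline S\right)
\]
then shows $\tau(\supp\varphi)=\tau(\overline S)=\tau(\overline S\cap K)$ is compact. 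This is exactly the paper's argument, applied with $K=\bigcup_i\chi_i^{-1}(\supp f_i)$ so that no reduction to a single summand is needed. The moral is that in the non-Hausdorff setting one must push the set forward to $B$ \emph{before} taking closures; your attempt to close up inside $M$ first is precisely the step that fails.
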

\begin{proof}
Write $\varphi = \sum_{i=1}^n (f_i \circ \chi_i)_0$ where $\chi_i : U_i \to \R^d$ are charts, $f_i \in C_c^\infty(\R^d)$, and the $0$ subscripts denote the operation of extension by zero. Then, $K = \bigcup_{i=1}^n \chi_i^{-1}(\supp(f_i))$ is a compact (but possibly not closed) subset of $M$ containing $\varphi^{-1}(\C^\times)$. The conclusion follows from the elementary point-set topological lemma below, with $S = \varphi^{-1}(\C^\times)$.
\end{proof}

\begin{lemma}
Let $f : X \to Y$ be a continuous map of topological spaces, where $Y$ is Hausdorff and $X$ is possibly non-Hausdorff. Suppose that $S \subseteq K \subseteq X$ and $K$ is compact (but possibly not closed). Then, $f(\overline S) = \overline{ f(S)}$ and this is a compact subset of $Y$. 
\end{lemma}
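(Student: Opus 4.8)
The plan is to prove the two inclusions $f(\overline S) \subseteq \overline{f(S)}$ and $\overline{f(S)} \subseteq f(\overline S)$ separately; the first is the standard consequence of continuity, while the second is where compactness of $K$ and the Hausdorff property of $Y$ enter. The statement that the common value is compact will then fall out of the proof of the second inclusion.

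First I would note that, for any continuous $f$ and any $S$, one has $f(\overline S) \subseteq \overline{f(S)}$: if $x \in \overline S$ and $V$ is an open neighbourhood of $f(x)$, then $f^{-1}(V)$ is an open neighbourhood of $x$, hence meets $S$, so $V$ meets $f(S)$. No hypothesis on $X$, $Y$, or $K$ is needed for this.

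For the reverse inclusion the key move is to replace $K$ by $K \cap \overline S$. Since $\overline S$ is closed in $X$, the set $K \cap \overline S$ is a closed subset of the compact space $K$, hence compact; therefore $f(K \cap \overline S)$ is compact in $Y$, and since $Y$ is Hausdorff it is closed. Because $S \subseteq K$ and $S \subseteq \overline S$, we have $S \subseteq K \cap \overline S$, so $f(S) \subseteq f(K \cap \overline S)$; taking closures and using that $f(K \cap \overline S)$ is already closed gives $\overline{f(S)} \subseteq f(K \cap \overline S) \subseteq f(\overline S)$. Combining with the first inclusion yields $f(\overline S) = \overline{f(S)} = f(K \cap \overline S)$, and the last description exhibits this set as a continuous image of a compact set, hence compact.

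The only subtlety — and the point where a naive argument fails — is that one cannot argue via ``$\overline S \subseteq \overline K$ and $\overline K$ is compact'': in a non-Hausdorff $X$ the closure of a compact set need not be compact, and indeed $\overline S$ need not even be contained in $K$. Intersecting with $K$ \emph{before} applying $f$, and invoking the Hausdorff property of $Y$ only downstairs to conclude that the image is closed, is exactly what circumvents this.
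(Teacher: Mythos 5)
Your proof is correct and follows essentially the same route as the paper's: both establish the chain $f(\overline S) \subseteq \overline{f(S)} \subseteq f(\overline S \cap K) \subseteq f(\overline S)$, using that $\overline S \cap K$ is compact and that its image is therefore closed in the Hausdorff space $Y$. Your closing remark about why one cannot simply pass to $\overline K$ is a nice clarification but not a difference in method.
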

\begin{proof}
We have $f\left(\overline S\right) \subseteq \overline{f(S)} \subseteq \overline{f\left(\overline S \cap K\right)} = f\left(\overline S \cap K\right) \subseteq f\left(\overline S\right)$, so all these sets are equal to the compact set $f(\overline S \cap K)$.
\end{proof}

Another issue with Definition~\ref{nonHausbump} is the poor control that the support of an element $\varphi \in C_c^\infty(M)$ exerts over the supports of the summands in the possible decompositions  $\varphi$ into functions coming from charts. One might expect that, if $U$ is open and $\mathrm{supp}(\varphi) \subseteq U$, then $\varphi \in C_c^\infty(U) \subseteq C_c^\infty(M)$, i.e. $\varphi$ can be expressed in terms of bump functions defined on chart neighbourhoods contained in $U$. In general, this is not assured, as the example below illustrates.

\begin{ex}\label{supbad}
Fix a smooth function $\theta : \R \to \R$ with $\theta(x)=0$ for $x \leq 0$ that restricts to a  diffeomorphism $\theta_+:(0,\infty)\to(0,\infty)$. Consider the non-Hausdorff smooth manifold $M = \R \cup_{\theta_+} \R$ obtained by using $\theta_+$ to glue two copies of $\R$ along $(0,\infty)$.  More precisely, 
\begin{align*}
M&=\left((-\infty,0] \times \{-1,1\}  \right)\cup \left( (0,\infty) \times \{0\} \right)  \\
U_+ &= \left((-\infty,0] \times \{1\}  \right)\cup \left( (0,\infty) \times \{0\} \right)\\
U_- &= \left((-\infty,0] \times \{-1\}  \right)\cup \left( (0,\infty) \times \{0\} \right) 
\end{align*}
\begin{center}
\begin{tikzpicture}[scale=.5]
\node at (-7,0) {$M$};
\node at (0,2) {$U_+$};
\node at (0,-2) {$U_-$};
\draw (-5,1)-- (0,1);
\draw (-5,-1)-- (0,-1);
\draw (.14,0)-- (5,0);
\node at (0,1) [circle,fill,inner sep=1.5pt]{};
\node at (0,-1) [circle,fill,inner sep=1.5pt]{};
\draw (0,0) circle (4pt);
\end{tikzpicture}
\hfill
\end{center}
with non-Hausdorff smooth manifold structure determined by the two charts:
\begin{align*}
\chi_+: U_+ \to \R && \chi_+(x,y)&= x  \\
\chi_-: U_- \to \R && \chi_-(x,y) &= \begin{cases}
x & x \leq 0 \\
\theta_+(x) & x > 0.
\end{cases} 
\end{align*}
Fix  $g \in C_c^\infty(\R)$ with $\mathrm{supp}(g)=[-1,1]$ such that $g(x)=x$ for all $x$ in a neighbourhood of $0$. Define $f \in C_c^\infty(\R)$ by $f(x)=g(\theta(x))$ for $x>0$ and $f(x)=0$ for $x \leq 0$. Let $\varphi \in C_c^\infty(M)$ be given by $\varphi = (g \circ \chi_-)_0 - (f \circ \chi_+)_0$, where the subscript $0$ denotes extension by zero.
\begin{center}
\hfill
\begin{tikzpicture}[scale=1]
\draw (-2,1)-- (2,1);
\draw (-2,-1)-- (2,-1);
\node at (0,1) [circle,fill,inner sep=1.5pt]{};
\node at (0,-1) [circle,fill,inner sep=1.5pt]{};
\node at (-1,2) {$f$};
\node at (-1,0) {$g$};
\draw[scale=1, black, thick, domain=.01:1.762, smooth, variable=\x] plot ({\x}, {1+\x*exp(-1/\x)*exp(1-1/(1-\x*exp(-1/\x)*\x*exp(-1/\x)))});
\draw[scale=1, black, thick, samples=101, domain=-.99:.99, smooth, variable=\x] plot ({\x}, {\x*exp(1-1/(1-\x*\x))-1});
\end{tikzpicture}
\hfill
\begin{tikzpicture}[scale=1]
\node at (-1,2) {$\varphi$};
\draw (-2,1)-- (0,1);
\draw (-2,-1)-- (0,-1);
\draw (0.07,0)-- (2,0);
\node at (0,1) [circle,fill,inner sep=1.5pt]{};
\node at (0,-1) [circle,fill,inner sep=1.5pt]{};
\draw (0,0) circle (2pt);
\draw[scale=1,  black, thick, samples=101, domain=-.99:0, smooth, variable=\x] plot ({\x}, {\x*exp(1-1/(1-\x*\x))-1});
\end{tikzpicture}
\hfill
\end{center}
Then, $\mathrm{supp}(\varphi) = [-1,0]  \times \{-1\}$  (note the point $(0,1)$ does not belong to the support because $\varphi$ vanishes identically on $U_+$). In particular, $\mathrm{supp}(\varphi) \subseteq U_-$. However, $\varphi$ does not belong to $C_c^\infty(U_-) \subseteq C_c^\infty(M)$.
\end{ex}

Another issue with Definition~\ref{nonHausbump} of relevance to us relates to smooth vector fields. On the one hand, there is no difficulty defining the tangent bundle $TM$ of a non-Hausdorff smooth manifold $M$ and defining a vector field $X$ to be a section of $TM$  which is smooth in every chart. On the other hand, integral curves of $X$ may not be unique, preventing one from talking about the flow of $X$. In fact things already go wrong at infinitesimal level, i.e. one does not even have a well-defined linear map $X:C_c^\infty(M)\to C_c^\infty(M)$. The  operators defined by $X$ in charts need not patch together. This issue arises already in the simplest examples, as shown in Example~\ref{badvecfld} below.

\begin{ex}\label{badvecfld}
Let $M=\R^\times \cup \{0_1,0_2\}$, the ``line with two origins'' with noncommutative smooth manifold structure coming from the two obvious charts $\chi_1,\chi_2:M\to\R$.  Let $X$ be the (global) smooth vector field on $M$ which coincides with $\frac{d}{dx}$ in both charts. Fix $f \in C_c^\infty(\R)$ with $f(0)=0$ and $f'(0)=1$. For $i=1,2$, let $\varphi_i \in C_c^\infty(M)$ be the extension by zero of $f \circ \chi_i$ and let $\psi_i \in C_c^\infty(M)$ be the extension by zero of $f' \circ \chi_i$. Then, $\varphi_1 - \varphi_2 = 0$, but $\psi_1 - \psi_2$ is the function which is zero on $\R^\times$, $1$ at $0_1$ and $-1$ at $0_2$. Thus, $\varphi_1 - \varphi_2 = 0$ and $\psi_1 - \psi_2\neq 0$. This shows that the result of applying   $X$ to the zero function is not unambiguously defined.
\end{ex}

A \textbf{flow} for a smooth vector field $X$ on a smooth manifold $M$ is a smooth map $(t,m) \mapsto \phi_t : W \to M$ where  $W \subseteq \R \times M$ is an open set containing $\{0\} \times M$ whose intersection with $\R \times \{m\}$ is connected for all $m \in M$ such that $\phi_0(m)=m$ for all $m \in M$ and $\frac{d}{dt} \phi_t(m) = X(m)$ for all $(t,m) \in W$. It is easy to see that, if $X$ has a flow, then any two flows agree on the intersection of their domains, and there is a unique maximal flow $(t,m)\mapsto \phi_t(m) : \R \times M$ which furthermore satisfies $\phi_{s+t}(m)=\phi_s(\phi_t(m))$ whenever $(t,m) , (s,\phi_t(m)) \in W$.

\begin{defn}\label{nonbranchingdef}
Let $M$ be a smooth manifold, not necessarily Hausdorff, and let $X$ be a smooth vector field on $M$. We say that $X$ is \textbf{non-branching} if it has a flow.
\end{defn}

A non-branching vector field $X$ with flow $\phi$ does determine a well-defined linear map $X:C_c^\infty(M)\to C_c^\infty(M)$. One way to see this is to note that the locally defined operators on charts agree with the global one defined by $(Xf)(p)=\lim_{t\to0} (f(\phi_t(p)-f(p))/t$, so they patch together. Indeed, one may check that existence of the flow of $X$ is equivalent to its well-definedness as an operator on $C_c^\infty(M)$.  Note the equation $(Xf)(p)=\lim_{t\to0} (f(\phi_t(p)-f(p))/t$ can be also be used to justify the following expected  fact that $X$, viewed as an operator on $C_c^\infty(M)$, does not increase supports.

\begin{propn}\label{nosuppinc}
If $X$ is a smooth, non-branching vector field on a possibly non-Hausdorff smooth manifold $M$, then $\supp(X \psi) \subseteq \supp(\psi)$ for all $\psi \in C_c^\infty(M)$. \qed
\end{propn}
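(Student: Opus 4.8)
The plan is to reduce the statement to a purely local question and invoke the characterization of the flow given in the text, namely that $X\psi$ is computed pointwise by $(X\psi)(p) = \lim_{t\to 0} \bigl(\psi(\phi_t(p)) - \psi(p)\bigr)/t$. The key observation is that this formula is a pointwise limit involving only the values of $\psi$ at points $\phi_t(p)$ for $t$ near $0$. Since $t \mapsto \phi_t(p)$ is a continuous curve with $\phi_0(p) = p$, if $p$ lies in the open set $U = M \setminus \supp(\psi)$ on which $\psi$ vanishes identically, then $\phi_t(p) \in U$ for all $t$ sufficiently close to $0$ (by continuity of the flow and openness of $U$), hence $\psi(\phi_t(p)) = 0 = \psi(p)$ for such $t$, and the difference quotient is identically zero near $t = 0$. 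Therefore $(X\psi)(p) = 0$.

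So the argument runs as follows. First I would recall that $U \coloneqq M \setminus \supp(\psi)$ is, by definition of support, the largest open set on which $\psi \equiv 0$. Next, fix $p \in U$. Since the flow $\phi$ is defined on an open set $W \subseteq \R \times M$ containing $\{0\} \times M$ and $t \mapsto \phi_t(p)$ is continuous with $\phi_0(p) = p \in U$, there is $\varepsilon > 0$ with $\phi_t(p) \in U$ for all $|t| < \varepsilon$. Then $\psi(\phi_t(p)) = 0$ for all $|t| < \varepsilon$, so the difference quotient $(\psi(\phi_t(p)) - \psi(p))/t$ vanishes for $0 < |t| < \varepsilon$, and passing to the limit gives $(X\psi)(p) = 0$. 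Since $p \in U$ was arbitrary, $X\psi$ vanishes on $U$, i.e. $X\psi$ vanishes on the complement of $\supp(\psi)$. Because $\supp(X\psi)$ is the complement of the largest open set on which $X\psi$ vanishes, and $U$ is an open set on which $X\psi$ vanishes, we conclude $\supp(X\psi) \subseteq \supp(\psi)$.

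The only subtlety worth a sentence is justifying that $(X\psi)(p)$ really equals that pointwise limit; but this is exactly what the paragraph preceding the proposition asserts (the locally defined chart operators agree with the global operator given by this formula, which is how one knows they patch). So I would simply cite that discussion. I do not expect any genuine obstacle here — the statement is, as the text says, an ``expected fact,'' and the proof is a two-line continuity argument once the pointwise formula for $X\psi$ is in hand; the non-Hausdorff hypothesis causes no trouble precisely because $X$ is assumed non-branching, which is what makes the flow (and hence the pointwise formula) available. One should just be mindful that $\supp(\psi)$ need not be compact and $U$ need not be "nice," but none of that matters: the argument only uses that $U$ is open and $\psi|_U \equiv 0$.
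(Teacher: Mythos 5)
Your argument is correct and is exactly the one the paper intends: the proposition is stated with the proof omitted, and the sentence immediately preceding it points to the pointwise formula $(X\psi)(p)=\lim_{t\to0}(\psi(\phi_t(p))-\psi(p))/t$ as the justification. Your continuity argument (the flow keeps $p$ in the open set $M\setminus\supp(\psi)$ for small $t$, so the difference quotient vanishes there) is the standard way to finish, and it handles the non-Hausdorff setting for precisely the reason you state.
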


If $X$ is  a \emph{complete} non-branching vector field, we   consider its flow as a smooth, 1-parameter group of diffeomorphisms of $M$  and  denote it by $t \mapsto e^{tX}$. In other words:
\begin{align*}
\tfrac{d}{dt} e^{tX} m =  X(m) && m \in M 
\end{align*}

In spite of the  shortcomings highlighted in Examples~\ref{nonHausex}, \ref{supbad} and \ref{badvecfld}, Definition~\ref{nonHausbump} does allow  one to bypass many issues relating to nonexistence of partitions of unity. The following simple lemma will suffice for many purposes.

\begin{lemma}[Cf. \cite{Khoshkam-Skandalis[2002]}, Lemma~1.3.]\label{cover}
Let $M$ be a possibly non-Hausdorff smooth manifold. Let $(U_i)_{i \in I}$ be a cover of $M$ by Hausdorff open sets. Then, every $\varphi \in C_c^\infty(M)$ can be expressed as a finite sum $\sum \varphi_i$ where $\varphi_i$ is the extension by zero to all of $M$ of some $f_i \in C_c^\infty(U_i)$.
\end{lemma}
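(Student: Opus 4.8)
The plan is to reduce to the case of a single chart and then transplant an ordinary Euclidean partition of unity.

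By Definition~\ref{nonHausbump} I may write $\varphi = \sum_{j=1}^n (g_j\circ\chi_j)_0$ with $\chi_j : V_j \to \R^d$ diffeomorphisms of open subsets $V_j \subseteq M$ and $g_j \in C_c^\infty(\R^d)$. The asserted conclusion is clearly additive in $\varphi$ (given decompositions of two functions as in the statement, add the summands having a common index $i$), so it suffices to treat a single summand; thus I may assume $\varphi = (g\circ\chi)_0$ for one chart $\chi : V \to \R^d$ and $g \in C_c^\infty(\R^d)$.

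Now $\{\chi(V\cap U_i) : i \in I\}$ is an open cover of $\R^d$, so choose a smooth partition of unity $(\lambda_\alpha)$ on $\R^d$ subordinate to it: $\lambda_\alpha \in C^\infty(\R^d)$, $\supp\lambda_\alpha \subseteq \chi(V\cap U_{i(\alpha)})$ for suitable indices $i(\alpha) \in I$, the supports locally finite, and $\sum_\alpha \lambda_\alpha \equiv 1$. As $\supp g$ is compact, only finitely many $\lambda_\alpha$, say $\lambda_1, \dots, \lambda_m$, have support meeting $\supp g$, and then $\sum_{k=1}^m \lambda_k \equiv 1$ on $\supp g$, so $g = \sum_{k=1}^m \lambda_k g$ on all of $\R^d$ (both sides vanish off $\supp g$). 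Each $\lambda_k g \in C_c^\infty(\R^d)$ has support the compact set $\supp\lambda_k \cap \supp g \subseteq \chi(V\cap U_{i(k)})$; hence $h_k \coloneqq (\lambda_k g)\circ\chi$ is a smooth function on $V$ supported in a compact subset of $V\cap U_{i(k)}$. Regarding $h_k$ as a function on the Hausdorff open set $U_{i(k)}$ — defined on the open subset $V\cap U_{i(k)}$ and supported in a compact, hence closed, subset thereof — its extension by zero $f_k$ is smooth and compactly supported on $U_{i(k)}$, i.e.\ $f_k \in C_c^\infty(U_{i(k)})$.

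Since extension by zero is transitive and $h_k$ already vanishes off $V\cap U_{i(k)}$, the extension of $f_k$ by zero to all of $M$ equals $(h_k)_0$; therefore $\varphi = (g\circ\chi)_0 = \sum_{k=1}^m (h_k)_0 = \sum_{k=1}^m (f_k)_0$, and collecting the $f_k$ sharing a common index $i$ into a single $f_i \in C_c^\infty(U_i)$ yields the required finite decomposition. The one delicate point is the passage from $h_k$ to $f_k \in C_c^\infty(U_{i(k)})$: it uses that a compact subset of the Hausdorff space $U_{i(k)}$ is closed there, so that $U_{i(k)} = (V\cap U_{i(k)}) \cup (U_{i(k)} \setminus \supp h_k)$ is an open cover on the two members of which the obvious local descriptions of $f_k$ agree. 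This is exactly the step that can fail for a non-Hausdorff cover set, as Example~\ref{supbad} illustrates; note also that no global paracompactness of $M$ is invoked, only the (automatic) paracompactness of the Euclidean chart $\R^d$.
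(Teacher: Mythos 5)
Your proof is correct and follows essentially the same route as the paper's: reduce by linearity to a single chart summand, then apply a standard partition of unity subordinate to the restricted cover (the paper does this on the Hausdorff open set $U$ directly, citing "usual theory of Hausdorff smooth manifolds," while you transport to $\R^d$ and spell the argument out). Your explicit remark on where Hausdorffness of the $U_i$ enters — compact supports being closed so that extension by zero is smooth — is a nice clarification but does not change the substance.
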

\begin{proof}
Without loss of generality, $\varphi$ is the extension by zero to all of $M$ of some $f \in C_c^\infty(U)$, where $U \subseteq M$ is open and Hausdorff.  Then, by usual theory of Hausdorff smooth manifolds, we can write $f$ as a finite sum $\sum f_i$ where $f_i \in C_c^\infty(U_i \cap U) \subseteq C_c^\infty(U_i)$. Then, letting $\varphi_i$ be the extension by zero to all of $M$ of $f_i$, we have $\varphi = \sum \varphi_i$.
\end{proof}

The following proposition shows some other important respects in which $C_c^\infty(M)$ is well-behaved.

\begin{propn}\label{convsetup} \text{ }
\begin{enumerate}
\item If $M$ and $N$ are possibly non-Hausdorff smooth manifolds, $f \in C_c^\infty(M)$, $g \in C_c^\infty(N)$, then $f \otimes g \in C_c^\infty(M\times N)$ (here, $(f\otimes g)(m,n)=f(m)g(n)$).
\item If $M$ is a possibly non-Hausdorff smooth manifold, and $N \subseteq M$ is a closed submanifold, then  restriction gives a surjective map $C_c^\infty(M) \to C_c^\infty(N)$.
\item If $M$ is a possibly non-Hausdorff smooth manifold and $\theta : M \to M$ is a self-diffeomorphism, then pullback along $\theta$ determines a linear bijection $C_c^\infty(M) \to C_c^\infty(M)$. 
\item If $\tau:M\to N$ is a smooth map of manifolds, with $N$ Hausdorff, then $C_c^\infty(M)$ is a $C^\infty(N)$-module with respect to $f \cdot \varphi \coloneqq (f \circ \tau)g$, $f \in C^\infty(N)$, $\varphi \in C_c^\infty(M)$.
\end{enumerate} \qed
\end{propn}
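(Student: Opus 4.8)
The plan is to treat each of the four statements by first reducing to the Hausdorff case via Definition~\ref{nonHausbump} and Lemma~\ref{cover}, then invoking the standard theory of Hausdorff manifolds. For (1), I would write $f = \sum_i (f_i\circ\chi_i)_0$ and $g = \sum_j (g_j\circ\eta_j)_0$ with $\chi_i : U_i\to\R^d$, $\eta_j : V_j\to\R^e$ charts and $f_i\in C_c^\infty(\R^d)$, $g_j\in C_c^\infty(\R^e)$; then $f\otimes g = \sum_{i,j}\big((f_i\otimes g_j)\circ(\chi_i\times\eta_j)\big)_0$, and since $\chi_i\times\eta_j : U_i\times V_j\to\R^{d+e}$ is a chart on $M\times N$ and $f_i\otimes g_j\in C_c^\infty(\R^{d+e})$ by the Hausdorff case, each summand lies in $C_c^\infty(M\times N)$. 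For (3), pullback along a diffeomorphism $\theta$ sends a chart function $(f\circ\chi)_0$ to $(f\circ(\chi\circ\theta))_0$, and $\chi\circ\theta$ is again a chart (on $\theta^{-1}(U)$), so $\theta^*$ maps the spanning set of $C_c^\infty(M)$ into itself; applying the same to $\theta^{-1}$ gives the two-sided inverse, hence a linear bijection. For (4), given $f\in C^\infty(N)$ and $\varphi = \sum_i(f_i\circ\chi_i)_0\in C_c^\infty(M)$, I note $(f\circ\tau)\varphi = \sum_i\big(((f\circ\tau)|_{U_i}\circ\chi_i^{-1})\cdot f_i\big)_0$ where now $(f\circ\tau)|_{U_i}\circ\chi_i^{-1}\in C^\infty(\R^d)$ (smoothness of $\tau$ in the chart plus smoothness of $f$ on the Hausdorff space $N$) and $f_i\in C_c^\infty(\R^d)$, so the product is again in $C_c^\infty(\R^d)$; the module axioms are then immediate from those of ordinary multiplication of functions.

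The one statement requiring genuine care is (2). Let $N\subseteq M$ be a closed submanifold. Surjectivity of the restriction $C_c^\infty(M)\to C_c^\infty(N)$ is the content here: well-definedness is trivial once one checks restriction preserves the spanning functions. Given $\psi\in C_c^\infty(N)$, write $\psi = \sum_i(g_i\circ\sigma_i)_0$ with $\sigma_i : V_i\to\R^k$ charts on $N$ and $g_i\in C_c^\infty(\R^k)$; it suffices to lift each summand. Fix such a summand supported (in $N$) in a single chart $V\subseteq N$ with $\sigma : V\to\R^k$; since $N$ is a submanifold of $M$, around each point of $V$ there is a chart $\chi : U\to\R^d$ of $M$ (with $U$ Hausdorff, say $\R^d$) in which $N\cap U$ is a linear subspace $\R^k\times\{0\}$ and $\chi|_{N\cap U}$ agrees with a submersion onto the $\sigma$-coordinates. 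Then I would extend $g\in C_c^\infty(\R^k)$ to $\widetilde g\in C_c^\infty(\R^d)$ by $\widetilde g(x,y) = g(x)\rho(y)$ for a bump function $\rho\in C_c^\infty(\R^{d-k})$ with $\rho(0)=1$, and set the lift to be $(\widetilde g\circ\chi)_0\in C_c^\infty(M)$; its restriction to $N$ is $(g\circ\sigma)_0$ on $U\cap N$. To globalize I cover $\supp_N(g\circ\sigma)_0$ — a compact subset of $N$ — by finitely many such adapted charts, use the Hausdorff partition-of-unity argument of Lemma~\ref{cover} to break $g\circ\sigma$ into pieces each supported in one adapted chart, lift each piece, and sum.

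The main obstacle is precisely this point-set bookkeeping in (2): one must ensure the supports behave, i.e. that the finitely many local lifts, when extended by zero to $M$, actually add up to a function whose restriction to $N$ is $\psi$ and not something contaminated by bad behaviour at points where $N$ is non-Hausdorff-adjacent to other sheets of $M$. Here I would lean on the fact that $N$ itself is Hausdorff (this is hypothesized throughout the paper for unit spaces, and in general "closed submanifold" can be taken to include the condition that $N$ inherits a Hausdorff structure, or one argues locally on $N$ where the issue does not arise), so Lemma~\ref{cover} applies to $N$ and the decomposition of $\psi$ into chart pieces is legitimate; extension by zero from an adapted chart $U$ of $M$ is harmless because the lifted piece is genuinely smooth and compactly supported in the Euclidean chart $U$. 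Once this is arranged, everything reduces to the classical fact that smooth functions on a submanifold extend to smooth functions on a tubular-neighbourhood-type chart, and the proof is complete.
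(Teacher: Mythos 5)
Your proof is correct and follows exactly the route the paper intends: the paper itself offers only the one-line remark that these statements are either immediate from Definition~\ref{nonHausbump} or follow from an application of Lemma~\ref{cover}, and your chart-by-chart reductions are precisely the omitted details. The only wobble, the appeal to Hausdorffness of $N$ in part (2), is unnecessary --- Lemma~\ref{cover} applies to a possibly non-Hausdorff $N$ using the cover by the (Hausdorff) traces $N\cap U$ of adapted charts $U$ of $M$, which is exactly your own parenthetical fallback.
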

The above statements are either immediate from Definition~\ref{nonHausbump}, or follow from an application of Lemma~\ref{cover}.

\section{Infinite series of vector fields}

The following section is somewhat technical, but more or less elementary. The results obtained will be somewhat stronger than strictly  necessary, but will enable us to streamline arguments later on. Essentially we need elaborated versions of the following elementary fact: if $(f_n)_{n \geq 0}$ is a sequence in $C_c^\infty(\R)$ with uniformly bounded supports, then there exist positive scalars $(c_n)_{n \geq 0}$ such that, for any sequence of scalars $(a_n)_{n \geq 0}$ with $|a_n| \leq c_n$, the series $\sum_{n \geq 0} a_n f_n$ converges absolutely and uniformly  to a function in $C_c^\infty(\R)$. To see this, one may take, for instance, $c_n = \min_{k \leq n} \left( 2^n \|f^{(k)}_n\|\right)^{-1}$. 

The following definition is well-suited to our purposes.

\begin{defn}
Let $P(z) = \sum_{m \geq 0} a_m z^m \in \C[[z]]$ be a formal series and $X$ a smooth vector field  on $\R^d$, thought of as a differential operator $X : C_c^\infty(\R^d) \to C_c^\infty(\R^d)$. Then, we define $\dom(P(X))$ to be the set of all $\varphi \in C_c^\infty(\R^d)$ such that $\sum_{m \geq 0} |a_m| \| \tfrac{\partial^\alpha}{\partial x^\alpha} X^m \varphi \|< \infty$ for every multi-index $\alpha \in \N^d$ and put 
\[ P(X) \varphi = \sum_{m \geq 0} a_m X^m \varphi \]
for all $\varphi \in \dom(P(X))$.
\end{defn}

Some basic  consequences of this definition are collected in the following proposition.

\begin{propn}
Suppose $P(z) = \sum_{m \geq 0} a_m z^m \in \C[[z]]$ and  $X$ is a smooth vector field on $\R^d$. Define $P(X) : \dom(P(X)) \to C_c^\infty(\R^d)$ as above.
\begin{enumerate}
\item $\frac{\partial^\alpha}{\partial x^\alpha} P(X) \varphi = \sum_{m\geq0} \frac{\partial^\alpha}{\partial x^\alpha} X^m \varphi$ for all $\alpha \in \N^d$, $\varphi \in C_c^\infty(\R^d)$, where  the series converges absolutely and uniformly.
\item $\supp(P(X)\varphi) \subseteq \supp(\varphi)$ for all $\varphi \in \dom(P(X))$.
\item The definition of $P(X)$ is coordinate-independent; if $\theta : \R^d \to \R^d$ is a diffeomorphism, then $\dom(P(\theta^*(X))) = \theta^*(\dom(P(X)))$ and $P(\theta^*(X)) = \theta^*(P(X))$.
\item If $P(z) = \sum_{m \geq 0} a_m z^m, Q(z) = \sum_{m \geq 0} b_m z^m \in \C[[z]]$ and $|a_m|\leq|b_m|$ for $m\geq 0$, then $\dom(Q(X)) \subseteq \dom(P(X))$.
\end{enumerate}\qed
\end{propn}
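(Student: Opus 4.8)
The plan is to verify the four assertions essentially from the definition of $\dom(P(X))$, treating each as a routine consequence of absolute/uniform convergence of the relevant series. Throughout, write $P(z) = \sum_{m \geq 0} a_m z^m$, and recall that membership $\varphi \in \dom(P(X))$ means $\sum_{m \geq 0} |a_m| \, \|\frac{\partial^\alpha}{\partial x^\alpha} X^m \varphi\| < \infty$ for every multi-index $\alpha \in \N^d$.

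For part (1): fix $\varphi \in \dom(P(X))$ and a multi-index $\alpha$. By hypothesis the series $\sum_m a_m \frac{\partial^\alpha}{\partial x^\alpha} X^m \varphi$ converges absolutely and uniformly (its terms are dominated in uniform norm by the summable sequence $|a_m| \, \|\frac{\partial^\alpha}{\partial x^\alpha} X^m \varphi\|$). Since for each $N$ the partial sum $\sum_{m \leq N} a_m X^m \varphi$ is a genuine element of $C_c^\infty(\R^d)$ and uniform convergence of a sequence of $C^1$ functions together with uniform convergence of their derivatives lets one differentiate term-by-term, an induction on $|\alpha|$ shows $\frac{\partial^\alpha}{\partial x^\alpha}$ commutes with the infinite sum; this simultaneously shows $P(X)\varphi$ is smooth. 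That $P(X)\varphi$ has compact support — indeed the support containment of part (2) — follows because each $X^m \varphi$ is supported in $\supp(\varphi)$ (Proposition~\ref{nosuppinc}, or directly since $X$ is a differential operator), hence so is every partial sum, hence so is the uniform limit; thus $\supp(P(X)\varphi) \subseteq \supp(\varphi)$ and $P(X)\varphi \in C_c^\infty(\R^d)$. Parts (1) and (2) are therefore dispatched together.

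For part (3): let $\theta : \R^d \to \R^d$ be a diffeomorphism and $Y = \theta^*(X)$, so that $Y(\psi) = (X(\psi \circ \theta^{-1})) \circ \theta$, i.e. $\theta^*$ intertwines the operators $X$ and $Y$; by induction $Y^m = \theta^* \circ X^m \circ (\theta^{-1})^*$. The only point needing attention is that the defining convergence condition is preserved under $\theta^*$. This requires knowing that pullback along a fixed diffeomorphism maps $\dom$-type convergence to $\dom$-type convergence: the chain rule expresses $\frac{\partial^\alpha}{\partial x^\alpha}(\psi \circ \theta)$ as a finite sum of terms $(\frac{\partial^\beta}{\partial x^\beta}\psi)\circ\theta$ with $|\beta| \leq |\alpha|$ multiplied by smooth functions (derivatives of the components of $\theta$) that are bounded on the compact set $\supp(\psi \circ \theta)$. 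Hence $\|\frac{\partial^\alpha}{\partial x^\alpha}(X^m\varphi \circ \theta)\|$ is bounded by a constant (depending on $\alpha$ and on $\theta$ restricted to a fixed compact neighbourhood of $\supp(\varphi\circ\theta)$, but not on $m$, using part (2) to keep all supports inside a fixed compact set) times $\max_{|\beta| \leq |\alpha|}\|\frac{\partial^\beta}{\partial x^\beta}X^m\varphi\|$. Summing against $|a_m|$ shows $\theta^*$ carries $\dom(P(X))$ into $\dom(P(Y))$, and applying the same to $\theta^{-1}$ gives the reverse inclusion, so $\dom(P(Y)) = \theta^*(\dom(P(X)))$; the operator identity $P(Y) = \theta^* \circ P(X) \circ (\theta^{-1})^*$ then follows by passing to the limit in the finite-sum identity $\sum_{m\leq N} a_m Y^m = \theta^*(\sum_{m \leq N} a_m X^m)(\theta^{-1})^*$. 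This is the part with the most bookkeeping and is the main (though still routine) obstacle.

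For part (4): if $|a_m| \leq |b_m|$ for all $m$ and $\varphi \in \dom(Q(X))$, then for each $\alpha$ one has $\sum_m |a_m| \, \|\frac{\partial^\alpha}{\partial x^\alpha}X^m\varphi\| \leq \sum_m |b_m| \, \|\frac{\partial^\alpha}{\partial x^\alpha}X^m\varphi\| < \infty$, so $\varphi \in \dom(P(X))$; this is immediate. Assembling (1)--(4) completes the proof.
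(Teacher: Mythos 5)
Your proposal is correct and follows exactly the route the paper intends: the paper omits the proof as routine, noting only that part (3) reduces to the chain-rule expansion $\frac{\partial^\alpha}{\partial x^\alpha}(f\circ\theta)=\sum_{|\beta|\le|\alpha|}\bigl[(\frac{\partial^\beta}{\partial x^\beta}f)\circ\theta\bigr]\cdot\theta_\beta$, which is precisely the estimate you carry out (correctly using part (2) to confine all supports to a fixed compact set so the coefficients $\theta_\beta$ are uniformly bounded independently of $m$). Parts (1), (2), and (4) are handled by the standard term-by-term differentiation, locality-of-differential-operators, and comparison arguments, as expected.
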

The proof of the above proposition is straightforward and we omit it. For example, property (3) comes down to the fact that $\frac{\partial^\alpha}{\partial x^\alpha} (f \circ \theta)$ can be expressed as a finite sum $\sum_{|\beta|\leq|\alpha|} \left[ (\frac{\partial^\beta}{\partial x^\beta} f) \circ \theta \right] \cdot \theta_\beta$, where $\theta_\beta$ is a smooth function made up of partial derivatives of components of $\theta$. 

Property (3) permits us to make the following definition:

\begin{defn}
Suppose that $M$ is a not-necessarily-Hausdorff smooth manifold, $X$ is a smooth, non-branching (Definition~\ref{nonbranchingdef}) vector field on $M$ and $P(z) = \sum_{m \geq 0} a_m z^m \in \C[[z]]$.  Then, we define $\dom(P(X))$ to be the linear span of functions $\varphi$ on $M$ given as the extension by zero of functions $f \circ \chi$, where  $\chi : U \to \R^d$ is a Euclidean chart and $f \in C_c^\infty(\R^d)$ belongs to $\dom(P(Y))$, where $Y=\chi_*(X|_U)$.  We define $P(X) :  \dom(P(X)) \to C_c^\infty(M)$ by 
\[ P(X)\varphi = \sum_{m \geq 0} a_m X^m \varphi, \]
this series being absolutely and uniformly convergent for  $\varphi \in \dom(P(X)$.
\end{defn}

The lemma below will be used to control the domains of compositions of operators $P(X)$.

\begin{lemma}
Suppose that $(a_m)_{m \geq 0}$ is a sequence of nonnegative scalars. Then, there exists a non-increasing sequence of positive scalars $(c_m)_{m \geq 0}$ such that
\[ \sum_{m_1,\ldots,m_n \geq 0 } c_{m_1}\cdots c_{m_n} a_{m_1+\ldots+m_n+r} < \infty \]
for all integers $n,r$ with $n \geq 1$ and $r \geq 0$. 
\end{lemma}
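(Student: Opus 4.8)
The plan is to build the sequence $(c_m)$ by a diagonal argument, handling countably many convergence conditions (one for each pair $(n,r)$) simultaneously. First, I would observe that, up to replacing each $a_m$ by $\max(a_m,1)$, we may assume $a_m\geq 1$ for all $m$; this is harmless since enlarging the $a_m$ only makes the required convergence harder, and it ensures the quantities below are well-behaved. Enumerate the countable set of constraints $(n,r)$ with $n\geq 1$, $r\geq 0$ as a single sequence $(n_k,r_k)_{k\geq 1}$. The idea is to choose $c_m$ decaying fast enough that, for each fixed $k$, the tail of the $k$-th multi-sum (the part where at least one index $m_i$ exceeds some threshold depending on $k$) is summable, while the finitely many "small-index" terms are automatically finite.

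The key step is the estimate that makes the multi-sum factor. Fix $n$ and $r$. Since $(c_m)$ will be non-increasing and we can arrange $c_m\leq 1$, for indices $m_1,\dots,m_n$ we have $c_{m_1}\cdots c_{m_n}\leq c_{\max_i m_i}\leq c_{\lceil (m_1+\dots+m_n)/n\rceil}$. Setting $s=m_1+\dots+m_n$, the number of $n$-tuples of nonnegative integers summing to $s$ is $\binom{s+n-1}{n-1}\leq (s+1)^{n-1}$, so
\[
\sum_{m_1,\dots,m_n\geq0} c_{m_1}\cdots c_{m_n}\, a_{m_1+\dots+m_n+r}
\;\leq\; \sum_{s\geq 0} (s+1)^{n-1}\, c_{\lceil s/n\rceil}\, a_{s+r}.
\]
Thus it suffices to choose $(c_m)$ non-increasing, positive, with $c_m\leq 1$, such that $\sum_{s\geq 0}(s+1)^{n-1} c_{\lceil s/n\rceil} a_{s+r}<\infty$ for every $n\geq1$, $r\geq0$. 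Now I would define $c_m$ by a diagonalization: having listed the constraints as $(n_k,r_k)$, set
\[
c_m \;=\; \min\Bigl(1,\; \min_{1\leq k\leq m}\; \frac{2^{-m}}{\,1+\sum_{j=0}^{m n_k} (j+1)^{n_k-1} a_{j+r_k}\,}\Bigr),
\]
which is manifestly positive and non-increasing. For a fixed constraint $(n,r)=(n_k,r_k)$, split the sum over $s$ at $s=mn$ roughly: for $s$ with $\lceil s/n_k\rceil = m\geq k$, the factor $c_m$ is $\leq 2^{-m}/(1+\sum_{j=0}^{mn_k}(j+1)^{n_k-1}a_{j+r_k})$, and the block of terms with $\lceil s/n_k\rceil=m$ contributes at most $2^{-m}$; summing over $m\geq k$ gives something finite, while the finitely many terms with $\lceil s/n_k\rceil<k$ form a finite sum. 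Hence each constraint is satisfied.

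The main obstacle — really the only delicate point — is getting the bookkeeping in the diagonal definition right: each $c_m$ may only "see" finitely many of the constraints (those with index $k\leq m$), yet every constraint must eventually be controlled, and the $2^{-m}$ factor has to dominate the combinatorial growth $(s+1)^{n-1}$ and the partial sums of $a$'s uniformly once $m$ is large. The reduction displayed above, bounding the product of $n$ values of $c$ by a single value at the averaged index and counting compositions of $s$, is what collapses the $n$-fold sum to a one-dimensional sum and makes this feasible; after that, the choice of $c_m$ is essentially forced and the verification is routine. I would also remark that monotonicity of $(c_m)$ is used twice — once in the product bound $c_{m_1}\cdots c_{m_n}\leq c_{\lceil s/n\rceil}$ and once implicitly to keep the definition consistent — so it is worth stating explicitly that the displayed formula is non-increasing in $m$.
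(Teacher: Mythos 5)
Your proof is correct, and it takes a genuinely different route from the one in the paper. The paper attacks the $n$-fold sums directly: it iteratively builds auxiliary sequences $(c^{(i)}_m)$, one per level, using the basic fact that one can sum a single sequence against countably many given sequences, then sets $c_m=\min(c_m^{(1)},\dots,c_m^{(m)})$ and verifies the conclusion by induction on $n$, splitting each multi-sum according to whether all indices are $\geq n$ or some index is $<n$. Your key move is instead to collapse the $n$-fold sum to a one-dimensional sum before doing any diagonalization: the bound $c_{m_1}\cdots c_{m_n}\leq c_{\max_i m_i}\leq c_{\lceil s/n\rceil}$ (using $c_m\leq 1$ and monotonicity) together with the composition count $\binom{s+n-1}{n-1}\leq(s+1)^{n-1}$ reduces everything to $\sum_s (s+1)^{n-1}c_{\lceil s/n\rceil}a_{s+r}<\infty$, after which the explicit formula for $c_m$ and the block-by-block estimate $\leq 2^{-m}$ are routine. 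This buys you a single, explicit, manifestly non-increasing sequence and no induction on $n$; the price is that monotonicity and the normalization $c_m\leq 1$ become essential hypotheses rather than afterthoughts (the paper's $c_m$ is not obviously non-increasing as constructed, though it can be made so by passing to $\min_{k\leq m}c_k$). All the individual estimates check out — the only genuinely load-bearing points are $\max_i m_i\geq\lceil s/n\rceil$ and the fact that each block $\lceil s/n_k\rceil=m$ contains only indices $s\leq mn_k$, both of which you use correctly — and the preliminary reduction to $a_m\geq 1$ is harmless but actually unnecessary, since the $+1$ in your denominator already guarantees positivity of $c_m$.
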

\begin{proof}
Without loss of generality, the terms of $(a_m)$ are positive. We make repeated use of the fact that, given positive scalars  $(a_m^{(i)})_{m, i\geq 0}$,  there is a positive sequence $(b_m)_{m\geq 0}$ such that $\sum_{m \geq 0} b_m a_m^{(i)} < \infty$ for every $i \geq 0$. One may use, for instance, $b_m = \left( 2^m  \max_{i \leq m}  a_m^{(i)}    \right)^{-1}$. 

Applying the aforementioned fact, there is a sequence of positive scalars   $(c^{(1)}_m)_{m \geq 0}$  such that  $a_r^{(2)} \coloneqq \sum_{m \geq 0} c^{(1)}_m a_{m+r}<\infty$ for all $r \geq 0$. Applying this fact again, there is a sequence of positive scalars  $(c^{(2)}_m)_{m \geq 0}$ such that  $a_r^{(3)} \coloneqq \sum_{m \geq 0} c^{(2)}_m a_{m+r}^{(2)} = \sum_{m_1,m_2 \geq 0} c^{(1)}_{m_1} c^{(2)}_{m_2} a_{m_1+m_2+r}<\infty$ for all $r \geq 0$. Continuing in this way, we obtain positive sequences $(c_m^{(i)})_{m \geq 0}$ for $i=1,\ldots,m$ such that
\[ \sum_{m_1,\ldots,m_n \geq 0 } c_{m_1}^{(1)} \cdots c_{m_n}^{(n)} a_{m_1+\ldots+m_n+r} < \infty \]
for all $r \geq 0$.

Next, put $c_m \coloneqq \min(c_m^{(1)},\ldots, c_m^{(m)})$. We show that $(c_m)$ satisfies the conclusion of the lemma  by induction on $n \geq 1$. Since $c_m \leq c_m^{(1)}$ for all $m$, the statement holds for $n=1$. Let $n \geq 2$. Then, 
\begin{align*}
&
\sum_{m_1,\ldots,m_n \geq 0 } c_{m_1}\cdots c_{m_n} a_{m_1+\ldots+m_n+r} \\
\leq & 
\sum_{m_1,\ldots, m_n \geq n} c_{m_1}\cdots c_{m_n} a_{m_1+\ldots+m_n+r} +   n\sum_{m=0}^{n-1} c_m \sum_{m_1,\ldots,m_{n-1} \geq 0}
 c_{m_1}\cdots c_{m_{n-1}} a_{m_1+\ldots+m_{n-1}+m+r}. 
\end{align*}
The first term is finite because $c_m \leq c^{(n)}_m$ for $m \geq n$. The second term is finite by the induction hypothesis.
\end{proof}
\begin{lemma}
Suppose $X_1,X_2,X_3,\ldots$ is a sequence of smooth vector fields on $\R^d$ and  $\varphi \in C_c^\infty(\R^d)$.  Then, there exists a sequence $(c_m)_{ m \geq 0}$ of positive real numbers  such that, for any formal series $P(z) = \sum_{m \geq 0} a_m z^m \in \C[[z]]$ satisfying $|a_m|\leq c_m$ for all $m\geq0$, one has $\varphi \in \mathrm{dom}( P(X_{i_1}) \cdots P(X_{i_n}) )$ for all $n, i_1,\ldots,i_n \geq 1$.
\end{lemma}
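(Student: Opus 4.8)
The plan is to reduce the statement to the previous lemma by extracting, from the single function $\varphi$ and the sequence of vector fields $(X_i)$, a single sequence of nonnegative scalars $(a_m)$ that dominates all the ``seminorm data'' one needs to control, and then to feed that sequence into the previous lemma to obtain $(c_m)$. Concretely, for a word $i_1,\ldots,i_n$ of indices and a multi-index $\alpha$, membership of $\varphi$ in $\dom(P(X_{i_1})\cdots P(X_{i_n}))$ is, by unwinding the definitions, a statement about absolute convergence of the multiple series $\sum_{m_1,\ldots,m_n\ge0}|a_{m_1}|\cdots|a_{m_n}|\,\|\tfrac{\partial^\alpha}{\partial x^\alpha} X_{i_1}^{m_1}\cdots X_{i_n}^{m_n}\varphi\|$, together with the analogous statements for the intermediate partial compositions (so that each inner $P(X_{i_k})\cdots P(X_{i_n})\varphi$ actually lands in $C_c^\infty(\R^d)$ and in the domain of the next operator). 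So the first step is to make this unwinding explicit: $\varphi\in\dom(P(X_{i_1})\cdots P(X_{i_n}))$ holds provided that, for every $1\le k\le n$ and every $\alpha$,
\[ \sum_{m_k,\ldots,m_n\ge0} |a_{m_k}|\cdots|a_{m_n}|\,\bigl\| \tfrac{\partial^\alpha}{\partial x^\alpha} X_{i_k}^{m_k}\cdots X_{i_n}^{m_n}\varphi\bigr\| < \infty. \]

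The second step is to bound the operator norms $\|\tfrac{\partial^\alpha}{\partial x^\alpha} X_{i_k}^{m_k}\cdots X_{i_n}^{m_n}\varphi\|$ by something that depends only on the \emph{total} order $m_k+\cdots+m_n$ and not on the individual word. Here I would use that $\varphi$ has compact support $K$, enlarge $K$ slightly to a fixed compact $K'$, and let $C=\sup_i \sup_{K'}$ (finitely many terms matter, but to be safe take a $\sup$ over a compact exhaustion as in the ``$b_m=(2^m\max_{i\le m} a_m^{(i)})^{-1}$'' trick used twice already in this section) of the coefficients of the first-order operators $\tfrac{\partial^\alpha}{\partial x^\alpha}X_i$ together with all their derivatives up to some order depending on $|\alpha|$. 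Applying a fixed first-order differential operator with $C^\infty$ coefficients bounded by $C$ on $K'$ raises the ``$C^N$-norm on $K'$'' of a function by at most a factor $C_N$ depending on $N$ and $C$; iterating, $\|\tfrac{\partial^\alpha}{\partial x^\alpha} X_{i_k}^{m_k}\cdots X_{i_n}^{m_n}\varphi\| \le a_{m_k+\cdots+m_n}$, where I \emph{define}
\[ a_M \coloneqq \sup\Bigl\{ \bigl\| \tfrac{\partial^\alpha}{\partial x^\alpha} Y_1\cdots Y_M\,\varphi\bigr\| : |\alpha|\le M,\ Y_j\in\{X_1,\ldots,X_M\}\Bigr\}, \]
a perfectly good finite nonnegative number for each $M$ (finite sup of finitely many finite quantities; for $|\alpha|>M$ one uses a slightly larger but still explicit bound, or simply redefines $a_M$ to also range over $|\alpha|\le$ some fixed cofinal function of $M$ — any choice that eventually dominates every fixed $\alpha$ works). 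The point is only that $a_M$ does not depend on the word $i_1,\ldots,i_n$ once $n,|\alpha|,\max_k i_k\le M$.

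The third step is the assembly: given the sequence $(a_M)$ just constructed, invoke the previous lemma to get a non-increasing sequence of positive scalars $(c_m)$ with $\sum_{m_1,\ldots,m_n\ge0} c_{m_1}\cdots c_{m_n} a_{m_1+\cdots+m_n+r}<\infty$ for all $n\ge1$, $r\ge0$. Then for any $P(z)=\sum a_m z^m$ with $|a_m|\le c_m$, and any word $i_1,\ldots,i_n$, the convergence criterion from step one holds term-by-term: for each $k$ and each $\alpha$, choosing $M_0\ge\max(n,|\alpha|,i_1,\ldots,i_n)$ we have, for $m_k,\ldots,m_n$ with $m_k+\cdots+m_n\ge$ (a threshold making $M=m_k+\cdots+m_n$ large enough that the word and $\alpha$ are ``covered''), $\|\tfrac{\partial^\alpha}{\partial x^\alpha}X_{i_k}^{m_k}\cdots X_{i_n}^{m_n}\varphi\|\le a_{m_k+\cdots+m_n}$, and the finitely many remaining low-order terms are manifestly finite; so the series is dominated (up to the finitely many initial terms and a shift $r$ absorbing the gap between $M_0$ and the threshold) by $\sum c_{m_k}\cdots c_{m_n} a_{m_k+\cdots+m_n+r}<\infty$. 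This establishes $\varphi\in\dom(P(X_{i_1})\cdots P(X_{i_n}))$ for all words, which is the claim.

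I expect the main obstacle to be the bookkeeping in step two: getting a clean word-independent bound $\|\tfrac{\partial^\alpha}{\partial x^\alpha} Y_1\cdots Y_M\varphi\|\le a_M$ that is simultaneously (a) finite for each fixed $M$, (b) genuinely independent of which vector fields from $\{X_1,\ldots,X_M\}$ appear and in what order, and (c) compatible with the fixed-$\alpha$ quantifier in the domain definition (since the domain asks convergence ``for every $\alpha$'', while I want a single sequence $(a_M)$). The resolution is the standard diagonal trick already used twice in this section — allowing $a_M$ to range over $|\alpha|\le M$ (or any fixed unbounded function of $M$) and over vector fields indexed $\le M$ — so that every fixed $\alpha$ and every fixed word is eventually absorbed; the shift parameter $r$ in the previous lemma is exactly what lets one ignore the finitely many ``not yet absorbed'' initial terms. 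Everything else (coordinate-independence of the domains, supports not increasing, extension by zero) is handled by the propositions already established, and the non-Hausdorff case follows immediately since $\dom(P(X))$ on a manifold was \emph{defined} chart-by-chart.
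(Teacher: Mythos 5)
Your proposal is correct and follows essentially the same route as the paper: define a single word-independent dominating sequence by taking the (finite) maximum of $\| \tfrac{\partial^\alpha}{\partial x^\alpha} X_{i_1}^{m_1}\cdots X_{i_n}^{m_n}\varphi\|$ over all data with indices, word length, and $|\alpha|$ bounded by the total order, feed it into the preceding summability lemma to get $(c_m)$, and absorb the finitely many ``uncovered'' terms. The only cosmetic difference is your detour through coefficient bounds on a compact neighbourhood of the support, which is unnecessary once $a_M$ is defined directly as a finite supremum of the actual norms (as the paper does with its $M_m$).
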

\begin{proof}
Set
\[ M_m = \max \left\{ \left\| \tfrac{\partial^\alpha}{\partial x^\alpha} X_{i_1}^{m_1} \cdots X_{i_n}^{m_n} \varphi \right\| :  m_1+\ldots+m_n +|\alpha| \leq m \text{ and } n,i_1,\ldots,i_n   \leq m \right\}, \]
where $\|\cdot\|$ denotes the uniform norm. By the preceding lemma, there exists a sequence $(c_m)_{m \geq 0}$ of positive real numbers such that
\[ \sum_{m_1,\ldots,m_n \geq 0 } c_{m_1}\cdots c_{m_n} M_{m_1+\ldots+m_n+r} < \infty. \]
for all integers $n,r$ with $n \geq 1$ and $r \geq 0$. Then, provided $|a_m|\leq c_m$, one has that
\[ \sum_{m_1,\ldots,m_n \geq 0} |a_{m_1}| \ldots |a_{m_n}| \left\| \frac{\partial^\alpha}{\partial x^\alpha} X_{i_1}^{m_1} \cdots X_{i_n}^{m_n} \varphi \right\| < \infty \]
for all $n, i_1,\ldots,i_n \geq 1$ and  $\alpha \in \mathbb{N}^d$. Indeed, for all but finitely many terms of the above sum, we have $n,i_1,\ldots,i_n   \leq m_1+\ldots+m_n+ |\alpha|$, whence $\left\| \frac{\partial^\alpha}{\partial x^\alpha} X_{i_1}^{m_1} \cdots X_{i_n}^{m_n} \varphi \right\| \leq M_{m_1+\ldots+m_n+|\alpha|}$, by definition. It follows that $\varphi \in \dom(P(X_1) \cdots P(X_n))$. Indeed, 
\[ P(X_{i_1})\cdots P(X_{i_n})\varphi  = \sum_{m_1,\ldots,m_n \geq 0} a_{m_1}\cdots a_{m_n} X^{m_1}_{i_1} \cdots X^{m_n}_{i_n} \varphi, \]
where the above series is uniformly and absolutely convergent to a function $C_c^\infty(\R^d)$.
\end{proof}

As a direct corollary, we have:

\begin{lemma}\label{fast enough decay}
Suppose $M$ is a possibly non-Hausdorff smooth manifold and  $X_1,X_2,X_3,\ldots$ are smooth, non-branching vector fields on $M$. Fix a finite subset $\mathscr{P} \subseteq C_c^\infty(M)$. Then, there exists a sequence $(c_m)_{ m \geq 0}$ of positive real numbers  such that, for any formal series $P(z) = \sum_{m \geq 0} a_m z^m \in \C[[z]]$ satisfying $|a_m|\leq c_m$ for all $m\geq0$, one has $\varphi  \in \mathrm{dom}( P(X_{i_1}) \cdots P(X_{i_n}) )$ for all $\varphi \in \mathscr{P}$ and all $n, i_1,\ldots,i_n \geq 1$. \qed
\end{lemma}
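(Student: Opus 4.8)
The plan is to reduce at once to the Euclidean statement of the preceding lemma, the only additional ingredients being the structure of $C_c^\infty(M)$ from Definition~\ref{nonHausbump} and the fact that a non-branching vector field acts one chart at a time.

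First I would use Definition~\ref{nonHausbump} to write each $\varphi \in \mathscr{P}$ as a finite sum of functions of the form $(f\circ\chi)_0$, where $\chi : U \to \R^d$ is a Euclidean chart, $f\in C_c^\infty(\R^d)$, and the subscript $0$ denotes extension by zero. Since $\mathscr{P}$ is finite, only finitely many pairs $(\chi^{(1)},f^{(1)}),\dots,(\chi^{(L)},f^{(L)})$ occur among all these summands, and since each $\dom\big(P(X_{i_1})\cdots P(X_{i_n})\big)$ is a linear subspace of $C_c^\infty(M)$, it is enough to produce a single positive sequence $(c_m)$ that places every $(f^{(\ell)}\circ\chi^{(\ell)})_0$ into all of these iterated domains simultaneously.

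Next I would unwind the definition of $\dom$ on a manifold: setting $Y_i^{(\ell)}\coloneqq \chi^{(\ell)}_*(X_i|_{U^{(\ell)}})$, which is a genuine smooth vector field on $\R^d$ because $\chi^{(\ell)}$ is a diffeomorphism onto $\R^d$, one has $(f^{(\ell)}\circ\chi^{(\ell)})_0 \in \dom\big(P(X_{i_1})\cdots P(X_{i_n})\big)$ as soon as $f^{(\ell)} \in \dom\big(P(Y_{i_1}^{(\ell)})\cdots P(Y_{i_n}^{(\ell)})\big)$ in $C_c^\infty(\R^d)$. The point is that, because $X_i$ is non-branching, the operator $P(X_i)$ acts chart-by-chart and does not enlarge supports (Proposition~\ref{nosuppinc}), so that applying the composition to $(f^{(\ell)}\circ\chi^{(\ell)})_0$ reduces to applying the corresponding Euclidean composition to $f^{(\ell)}$ and extending by zero. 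Then I would apply the preceding lemma once for each $\ell=1,\dots,L$, obtaining positive sequences $(c_m^{(\ell)})_{m\geq0}$ such that $|a_m|\leq c_m^{(\ell)}$ for all $m$ forces $f^{(\ell)}\in\dom\big(P(Y_{i_1}^{(\ell)})\cdots P(Y_{i_n}^{(\ell)})\big)$ for all $n,i_1,\dots,i_n\geq1$, and set $c_m\coloneqq\min_{1\leq\ell\leq L}c_m^{(\ell)}$.

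The argument is essentially bookkeeping, and the only step demanding genuine care is the chart-locality claim of the previous paragraph: one must check that the iterated operator $P(X_{i_1})\cdots P(X_{i_n})$ never escapes the chart carrying $(f^{(\ell)}\circ\chi^{(\ell)})_0$, so that membership in the manifold-level domain is faithfully detected by the Euclidean chart representative. This is precisely the sort of support bookkeeping in the non-Hausdorff setting that Sections~2 and~3 were set up to handle (compare Example~\ref{supbad}), and it is where the non-branching hypothesis is used.
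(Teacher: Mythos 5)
Your proposal is correct and is exactly the reduction the paper has in mind: the lemma is stated as a direct corollary of the preceding Euclidean lemma, obtained by decomposing each element of $\mathscr{P}$ into chart representatives, applying that lemma to the pushed-forward vector fields in each chart, and taking the coordinatewise minimum of the resulting sequences (which suffices since shrinking the coefficients only enlarges the domains). The chart-locality bookkeeping you flag is handled by the manifold-level definition of $\dom(P(X))$ together with Proposition~\ref{nosuppinc}, just as you describe.
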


\section{Lie groupoids and Lie groupoid actions}

In this somewhat lengthy section, we lay out the notations and conventions that will be used for Lie groupoids and their actions. For the most part, our conventions coincide with those of \cite{Mackenzie} and \cite{Renault[BOOK]}. We identify sections of the Lie algebroid with right-invariant vector fields, as is done in \cite{Mackenzie}. On the other hand, it will be convenient to have our vector fields and measures defined along the same fibers (namely the source fibers), so we will use right Haar systems instead of the left Haar systems used in \cite{Renault[BOOK]}. Although the material to follow is rather standard, we do sometimes include proofs, mainly in order to ensure that all arguments used make sense in the non-Hausdorff setting as well.

\subsection{Basic notions}

We will denote a typical Lie groupoid by $G$ and its unit space by $B$. The source and target maps are denoted $s,t : G \to B$. We always assume they are submersions. Multiplication is a smooth map from $G^{(2)} \coloneqq G\times_{s,t}G$ to $G$  that is performed from right to left and denoted by juxtaposition; given $\gamma_1,\gamma_2 \in G$, the product $\gamma_1\gamma_2$ is defined if and only if $s(\gamma_1)=t(\gamma_2)$. The inverse of $\gamma \in G$ is denoted by $\gamma^{-1}$. For convenience, it is always assumed that $B$ is embedded in $G$ as a closed submanifold. We write $k \coloneqq \dim(G)-\dim(B)$. In other words, $k$ is the dimension of the source and target fibers, for which we shall use the standard notations  $G_x \coloneqq s^{-1}(x)$ and $G^x \coloneqq t^{-1}(x)$, $x \in B$.

We allow the arrow space $G$, but not the unit space $B$, of a Lie groupoid to be non-Hausdorff. This is needed to accommodate certain examples of interest, such as groupoids arising from foliations. 
Dealing with the non-Hausdorff case  requires   modifications to the definitions one would use in the Hausdorff case. These kinds of issues are  well understood, see  \cite{Connes[1978]}, \cite{Connes[BOOK]}, \cite{Khoshkam-Skandalis[2004]}. We recall Hausdorffness of the unit space automatically implies that of the source and target fibers.

\begin{lemma}\label{hausfib}
Let  $G \rightrightarrows B$ be a possibly non-Hausdorff Lie groupoid. Then, for any $x \in B$, the source and target fibers $G_x$ and $G^x$ are Hausdorff.
\end{lemma}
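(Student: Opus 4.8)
The plan is to exploit the fact that a source fiber $G_x = s^{-1}(x)$, while sitting inside the possibly non-Hausdorff manifold $G$, is the preimage of a point under the submersion $s : G \to B$, and that $B$ is assumed Hausdorff. The intuition is that the only way non-Hausdorffness can occur in $G$ is through ``branching'' over the unit space, and restricting to a single source fiber kills this branching because any two points that fail to be separated in $G$ must have the same image under $s$, but then one can separate them by pulling back a separating pair of opens downstairs — except this is exactly what fails, so the real content is local. So first I would recall that $s$ being a submersion means that near any point $\gamma \in G$ there is a chart $U$ of $G$ in which $s$ looks like a coordinate projection $\R^k \times \R^{\dim B} \to \R^{\dim B}$; consequently $U \cap G_x$ is identified with an open subset of $\R^k$, so the source fibers are locally Euclidean and in particular locally Hausdorff. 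This is automatic and not the issue.

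The crux is separating two distinct points $\gamma_1 \ne \gamma_2$ in the \emph{same} fiber $G_x$ by disjoint open subsets of $G_x$. First I would dispose of the case $s(\gamma_1) = s(\gamma_2) = x$ but where $\gamma_1, \gamma_2$ are separated already in $G$: then intersecting the separating opens with $G_x$ finishes it. The remaining, genuine case is when $\gamma_1, \gamma_2$ are \emph{not} separated in $G$. Here I would use the submersion local form of $s$ around $\gamma_1$: there is an open $U \ni \gamma_1$ in $G$ together with a chart $\chi : U \xrightarrow{\sim} \R^k \times V$, where $V \subseteq B$ is open (and Hausdorff), such that $s|_U$ corresponds to $\mathrm{proj}_2$. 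Now consider $\gamma_2$: either $\gamma_2 \in U$, in which case $\gamma_1$ and $\gamma_2$ are separated inside the Hausdorff open set $U$, contradiction — or $\gamma_2 \notin U$, in which case I claim $U$ and $G \setminus \overline{\{\gamma_1\}}$... no — the cleaner move: take a smaller chart neighborhood. Around $\gamma_1$ pick a chart $U_1$ of the submersion form with $s(U_1) = V_1$, and around $\gamma_2$ pick $U_2$ of the submersion form with $s(U_2) = V_2$; set $W = V_1 \cap V_2$, a Hausdorff open neighborhood of $x$ in $B$. Then $U_1 \cap s^{-1}(W)$ and $U_2 \cap s^{-1}(W)$ are open in $G$, each meeting $G_x$ in a neighborhood of $\gamma_1$, resp.\ $\gamma_2$, inside $G_x$. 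If these two opens in $G_x$ are disjoint we are done; if not, their intersection is a nonempty open subset $O$ of $G_x$ on which \emph{both} submersion charts are defined.

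The key step — and the one I expect to be the main obstacle to write carefully — is showing that on such an overlap $O$ the two charts are compatible in a way forcing $\gamma_1$ and $\gamma_2$ to actually coincide, giving the contradiction. Concretely, the transition map $\chi_2 \circ \chi_1^{-1}$ is a diffeomorphism between open subsets of $\R^k \times V_1$ and $\R^k \times V_2$ intertwining the two projections to $B$; restricting to the fiber over $x$ it is a diffeomorphism between open subsets of $\R^k = \chi_1(G_x \cap U_1)$ and $\R^k = \chi_2(G_x \cap U_2)$. The subtlety is purely about whether $\gamma_1$ and $\gamma_2$ can have overlapping chart images in $G_x$ yet be distinct — but two points of a locally Euclidean space that lie in the closure of each other's every neighborhood, and that share a Euclidean chart domain, must be equal, since Euclidean space is Hausdorff. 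So the argument reduces to: \emph{any} two points of $G_x$ that fail to be separated in $G_x$ lie in a common Euclidean chart of $G_x$ (built by pulling back a submersion chart via a common base neighborhood $W$), hence are equal. I would phrase the write-up as: (1) $G_x$ is locally Euclidean via submersion charts; (2) given non-separated $\gamma_1, \gamma_2 \in G_x$, shrink the two submersion charts over a common Hausdorff base neighborhood $W \ni x$ to get opens of $G$ whose traces on $G_x$ are Euclidean neighborhoods $O_1 \ni \gamma_1$, $O_2 \ni \gamma_2$; (3) non-separation forces $O_1 \cap O_2 \ne \emptyset$, and picking $\gamma \in O_1 \cap O_2$ one checks $\gamma_1, \gamma$ lie in the Hausdorff set $O_1$ and $\gamma_2, \gamma$ lie in the Hausdorff set $O_2$ — hmm, that still does not directly give $\gamma_1 = \gamma_2$. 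The honest fix is step (3'): use that $s^{-1}(W)$, for $W$ \emph{itself} Hausdorff and small, together with the two submersion charts, exhibits a neighborhood of $\gamma_i$ in $G$ that maps homeomorphically to $\R^k \times W$; then $\gamma_1$ and $\gamma_2$ are non-separated points of $G$ each having a neighborhood homeomorphic to a Hausdorff space with matching image $\{0\}\times$(same point of $W$)-type data — and one shows the two local homeomorphisms glue, contradicting non-separation unless $\gamma_1 = \gamma_2$. I expect the delicate bookkeeping to be exactly this gluing/compatibility of the two submersion charts over $W$, and I would lean on the standard fact that a submersion admits, near each point, a chart in which it is a projection, applied simultaneously at $\gamma_1$ and $\gamma_2$ over the common base chart.
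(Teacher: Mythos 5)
There is a genuine gap, and it is structural: your argument uses only that $s$ is a submersion onto a Hausdorff base and that $G$ is locally Euclidean, but the statement ``fibers of a submersion from a locally Hausdorff manifold to a Hausdorff manifold are Hausdorff'' is \emph{false}. Take $M$ to be the plane with a doubled origin and $\pi\colon M\to\R$ the projection onto the first coordinate: $\pi$ is a submersion onto a Hausdorff base, yet $\pi^{-1}(0)$ is the line with two origins. So no amount of care with the two submersion charts over a common base neighbourhood $W$ can close the argument. You in fact notice the problem yourself (``that still does not directly give $\gamma_1=\gamma_2$''), but the proposed fix --- that the two local trivializations ``glue, contradicting non-separation unless $\gamma_1=\gamma_2$'' --- is exactly what fails: in the doubled-plane example the transition map on the overlap is the identity off the doubled fiber and extends to neither origin, and there is no contradiction. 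Non-separated points each sitting in a Euclidean chart, with overlapping chart traces, can perfectly well be distinct; that is precisely what non-Hausdorff manifolds look like.

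The missing ingredient is the groupoid structure, which your proof never invokes. The paper itself only cites Tu (Proposition~2.8), but the standard argument runs as follows: if $\gamma_1,\gamma_2\in G_x$ are non-separated, first countability of $G$ gives a sequence $\gamma_n$ converging to both (choose $\gamma_n$ in the intersection of shrinking basic neighbourhoods). The division map $(\alpha,\beta)\mapsto\alpha^{-1}\beta$ is continuous on $G\times_{s,s}G$, and $(\gamma_n,\gamma_n)\to(\gamma_1,\gamma_2)$ there, so $s(\gamma_n)=\gamma_n^{-1}\gamma_n\to\gamma_1^{-1}\gamma_2$; on the other hand $s(\gamma_n)\to x$ inside the unit space, which is a \emph{closed, Hausdorff} subspace of $G$, so limits of sequences of units are unique and $\gamma_1^{-1}\gamma_2$ must equal the unit at $x$, forcing $\gamma_1=\gamma_2$. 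The case of $G^x$ follows by applying the inversion homeomorphism. Any correct proof has to use multiplication and inversion in some such way; the purely submersion-theoretic route cannot succeed.
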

\begin{proof}
See \cite{Tu}, Proposition~2.8.
\end{proof}

We also recall that the space of units always admits a Hausdorff open neighbourhood.

\begin{propn}\label{hdrffnbhd}
Let $G \rightrightarrows B$ be a Lie groupoid. Then, there exists a Hausdorff open set $W \subseteq G$ with $B \subseteq W$.
\end{propn}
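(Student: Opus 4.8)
The plan is to build $W$ by covering $B$ with Hausdorff coordinate charts of $G$ and then \emph{shrinking} them so that overlapping charts no longer witness non-Hausdorffness. First I would fix, for each unit $x \in B$, a Hausdorff open chart domain $U_x \subseteq G$ with $x \in U_x$ (these exist since $G$ is locally Euclidean). Because $B$ is embedded as a closed submanifold and is itself Hausdorff and second-countable, I can pass to a countable subfamily $(U_i)_{i \in \N}$ that still covers $B$, and moreover arrange a locally finite refinement: using a partition of unity on the Hausdorff manifold $B$, choose open sets $V_i \subseteq U_i$ with $\overline{V_i \cap B} \subseteq U_i \cap B$ (closures taken in $B$) such that $(V_i \cap B)_i$ still covers $B$ and is locally finite in $B$. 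The candidate is $W = \bigcup_i V_i$, or rather a suitable open subset thereof chosen to kill the bad pairs of points.

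The key technical point is the following separation fact: if $p, q \in W$ cannot be separated by disjoint opens in $G$, then $p$ and $q$ lie in different charts $V_i, V_j$, and the ``gluing locus'' — the set of points of $U_i \cap U_j$ approached from both sides — is responsible. I would argue that for each unit $x \in B$ there is a Hausdorff open neighbourhood $W_x \subseteq G$ of $x$: indeed, take the finitely many indices $i$ with $x \in \overline{V_i \cap B}$, intersect the corresponding charts $U_i$ — but intersections of charts need not be Hausdorff in $G$... This is the crux, so instead I would use that $B$ is Hausdorff to separate the \emph{base points}: given $x \neq y$ in $B$, they have disjoint open neighbourhoods in $G$ (since $B$ is embedded closed and Hausdorff, and $s$ or $t$ pulls back a separation); so the only obstruction to Hausdorffness of a neighbourhood of $B$ comes from pairs of points $p \neq q$ with a \emph{common} image in $B$ under, say, $s$. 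But $G_{s(p)}$ is Hausdorff by Lemma~\ref{hausfib}, so $p, q$ in the same source fibre are separated in $G_{s(p)}$, hence in $G$ after intersecting with an $s$-saturated neighbourhood. The remaining case is $p, q$ with $s(p) = s(q) =: x$ but the separation in $G_x$ not extending to $G$; here I would invoke that $s$ is a submersion to get a local product structure $U_i \cong (U_i \cap B) \times \R^k$ near each unit, making $s$ look like a projection, so that a separation in the fibre direction combined with the Hausdorffness of $B$ in the base direction gives a genuine separation in $G$.

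Concretely, then, the cleanest route: for each $i$ choose a chart diffeomorphism $\chi_i : U_i \xrightarrow{\sim} O_i \subseteq \R^{\dim G}$ compatible with the submersion $s$, i.e. $s|_{U_i}$ corresponds to a linear projection; shrink to $V_i$ with $\overline{V_i} \cap B$ compact (in $B$) and contained in $U_i$, locally finite. Set $W = \bigcup_i V_i$. To check $W$ is Hausdorff, take $p \neq q$ in $W$. If $s(p) \neq s(q)$, separate them in $B$ and pull back — done. If $s(p) = s(q) = x$, both $p$ and $q$ lie in some $V_i, V_j$ with $x \in \overline{V_i \cap B} \cap \overline{V_j \cap B}$; by local finiteness only finitely many such indices occur, and since each $\chi_i$ respects $s$, in the fibre $G_x$ (Hausdorff) the points $p, q$ are separated by opens that, via the product chart structure and the Hausdorffness of $B$ near $x$, extend to disjoint opens of $G$ meeting $W$. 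The main obstacle, and where I would spend the most care, is exactly this last extension step — promoting a fibrewise separation to an ambient one while staying inside $W$ — and handling the fact that two different charts $V_i, V_j$ through the \emph{same} base point might a priori glue badly; the device that resolves it is choosing the $V_i$ small enough (using local finiteness and the embeddedness of $B$) that $V_i \cap V_j$ is itself a Hausdorff chart whenever $\overline{V_i \cap B}$ meets $\overline{V_j \cap B}$.

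\begin{rmk}
In fact the existence of such a $W$ is classical; one may alternatively cite \cite{Khoshkam-Skandalis[2004]} or \cite{Connes[BOOK]}, but we include the argument above since it is short and needed for later non-Hausdorff bookkeeping.
\end{rmk}
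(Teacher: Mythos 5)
The paper does not actually prove this proposition; it cites \cite{Crainic-Fernandes}, Lemma~4.18 (and points to \cite{Loizides-Sadegh-Sanchez}). Your attempt at a self-contained argument is therefore a different route, but it has a genuine gap at exactly the point you flag. The easy reductions are fine: if $p\neq q$ are non-separated then $s(p)=s(q)$ because $B$ is Hausdorff, and two points lying in a \emph{common} Hausdorff chart are separated. But the entire content of the proposition is the remaining case, $p\in V_i$, $q\in V_j$ with $i\neq j$ and $s(p)=s(q)$, and neither of the two devices you offer for it works. Intersecting with an ``$s$-saturated neighbourhood'' does nothing: a separation of $p,q$ inside the Hausdorff fibre $G_{s(p)}$ is by sets of the form $U\cap G_{s(p)}$, $U'\cap G_{s(p)}$ where $U,U'$ need not be disjoint in $G$, and saturating only enlarges them. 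Your second device --- shrinking so that $V_i\cap V_j$ ``is itself a Hausdorff chart'' --- is vacuous, because any intersection of Hausdorff open sets is automatically Hausdorff (a subspace of a Hausdorff space is Hausdorff); the failure mode for non-Hausdorffness is the \emph{union}, not the intersection, and in any case the offending pair $p,q$ need not lie in $V_i\cap V_j$, so Hausdorffness of the overlap separates nothing. (Relatedly, your parenthetical ``intersections of charts need not be Hausdorff in $G$'' is false.)

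The missing ingredient is a genuinely groupoid-theoretic one; no amount of shrinking a chart cover by itself suffices, since the same difficulty (a sequence $g_n$ converging simultaneously to $p$ in one chart and to $q$ in another) recurs at every scale. The standard argument exploits the division map $d(g,h)=gh^{-1}$: if $p\neq q$ are non-separated with $s(p)=s(q)$, then $(p,q)$ lies in the closure of the diagonal of $G\times_{s,s}G$, so $pq^{-1}$ is a non-unit that is non-separated from the unit $1_{t(p)}$; one then uses product charts adapted to $s$ containing the units over their $s$-image, together with the local closedness of the embedded submanifold $B$, to rule such elements out of a suitably chosen neighbourhood. Since none of this appears in your sketch, the safe course is the one in your closing remark: cite \cite{Crainic-Fernandes}, Lemma~4.18, as the paper does, rather than include an incomplete argument.
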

\begin{proof}
This follows from \cite{Crainic-Fernandes}, Lemma~4.18. See also the discussion in Section~7 of \cite{Loizides-Sadegh-Sanchez}.
\end{proof}

\subsection{The Lie algebroid of a Lie groupoid}

\begin{defn}[Definition 3.1, \cite{Mackenzie}]
The  \textbf{Lie algebroid} of a Lie groupoid $G \rightrightarrows B$ is the vector bundle $AG \to B$ obtained by restricting the source fiber tangent bundle $\ker(ds) \subseteq TG$ to $B$.
\end{defn}

There is a canonical bundle map $AG \to TB$ called the \textbf{anchor map} given by restricting the differential of the target map $dt: TG \to TB$ to $AG$. Significantly, there is also a natural bracket operation on the sections of $AG$, but we will not need this.

\begin{defn}
A vector field $X$ on a Lie groupoid $G$ is said to be \textbf{right-invariant} if it is tangent to the source fibers of $G$ and, for all $\gamma \in G$, one has 
\begin{align*} 
(R_\gamma)_* X = X && \gamma \in G,
\end{align*}
where $R_\gamma$ denotes right-multiplication by $\gamma$. Note that $R_\gamma$ is a diffeomorphism  $G_{t(\gamma)} \to G_{s(\gamma)}$ and the above equation is implicitly  understood to refer to the restriction of $X$ to the relevant source fibers.
\end{defn}

It is easy to see that a right-invariant vector field  on $G$ is completely determined by its restriction to $B$, which is naturally a section of $AG$. Conversely, every section of $AG$ extends uniquely to a right-invariant vector field on $G$. We will freely denote a section of the Lie algebroid and its extension to  a right-invariant vector field by the same symbol. We note that the bracket of two right-invariant vector fields is easily seen to be right-invariant, leading to the bracket operation on sections of $AG$ (that we will not be needing). 

Because right-invariant vector fields are tangent to source fibers and source fibers are Hausdorff (Lemma~\ref{hausfib}), we have the following.

\begin{lemma}\label{compactimpliescomplete}
Every right-invariant vector field on a possibly non-Hausdorff Lie groupoid is non-branching, i.e. has a well-defined flow . \qed
\end{lemma}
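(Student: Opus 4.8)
The plan is to reduce everything to the source fibres, which are Hausdorff by Lemma~\ref{hausfib}, so that classical ODE theory applies there without modification; the only real work is to reassemble the fibrewise flows into a single smooth object, and the key leverage for that is uniqueness of integral curves.

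First I would record the preliminaries. Since $X$ is right-invariant it is tangent to the source fibres, hence restricts to a smooth vector field $X_x$ on $G_x = s^{-1}(x)$ for each $x \in B$. If $c : I \to G$ is an integral curve of $X$ on an interval $I \ni 0$, then $\tfrac{d}{dt}(s \circ c) = ds\bigl(X(c(t))\bigr) = 0$, so $s \circ c$ is locally constant, hence (as $I$ is connected) constant. Thus $c$ takes values in the embedded submanifold $G_{s(c(0))}$, where it is an integral curve of $X_{s(c(0))}$; and since $G_{s(c(0))}$ is Hausdorff, such integral curves are unique. Therefore integral curves of $X$ through a given point of $G$ are unique as well, and one obtains a canonical candidate flow $\phi : W \to G$ by setting $W = \bigcup_{x \in B} W_x$ and $\phi|_{W_x} = \phi^x$, where $\phi^x : W_x \to G_x$ is the maximal flow of $X_x$ on the Hausdorff manifold $G_x$; in particular each $W_x \subseteq \R \times G_x$ is open and meets every line $\R \times \{m\}$ in an interval, and $\{0\}\times G \subseteq W$ with $\phi_0 = \id$.

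The substance of the argument is to check that $W$ is open in $\R \times G$ and that $\phi$ is smooth; this is not automatic from openness of each $W_x$ in $\R \times G_x$, since $\R \times G_x$ is not open in $\R \times G$. For this I would work in submersion charts: around any point of $G$ there is a chart $U$ in which $s$ is a coordinate projection onto an open subset of $B$, and there $X|_U$ is a ``vertical'' smooth vector field, so the classical existence--uniqueness--smooth-dependence theorem for ODEs with parameters yields a smooth local flow on an open neighbourhood of $\{0\}\times U$. By the uniqueness established above, every such local flow agrees with $\phi$ wherever both are defined, so $\phi$ is smooth near each point of its domain. Finally, given $(t_0,\gamma) \in W$, the set $\{\phi_t(\gamma) : t \text{ between } 0 \text{ and } t_0\}$ is a compact subset of $G$, which can be covered by finitely many such charts; chaining the corresponding local flows along this compact curve --- consistently, again by uniqueness --- produces, in the standard way, a neighbourhood of $(t_0,\gamma)$ contained in $W$ on which $\phi$ coincides with a smooth local flow. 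Hence $W$ is open and $\phi$ is smooth; since the slices $W \cap (\R \times \{m\})$ are connected by construction and $\phi_0 = \id$, $\tfrac{d}{dt}\phi_t(m) = X(m)$ hold fibrewise, $\phi$ is a flow for $X$, so $X$ is non-branching.

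I expect the main obstacle to be precisely this patching step. One cannot invoke a global existence theorem in the non-Hausdorff category, and must instead combine chartwise local ODE theory with uniqueness of integral curves --- which is available here only because the source fibres are Hausdorff (Lemma~\ref{hausfib}) --- to see that the locally defined smooth flows fit together into a single smooth map with open domain. Everything else is routine.
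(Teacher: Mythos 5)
Your proposal is correct and follows exactly the route the paper intends: the paper states this lemma with no proof beyond the preceding remark that right-invariant vector fields are tangent to the (Hausdorff, by Lemma~\ref{hausfib}) source fibers, which is precisely the reduction you carry out. Your careful patching of the fibrewise flows via submersion charts and uniqueness of integral curves is a correct filling-in of the details the paper leaves to the reader.
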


It is common practice to denote the anchor map of a Lie algebroid by $\#$, but we will not do this here because it conflicts with the standard notation for fundamental vector field in the context of smooth actions, which we are also using. Instead, given a right-invariant vector field $X$ on $G \rightrightarrows B$, we write $X^B$ for the corresponding vector field on $B$ arising from the anchor map. To be more specific, the vector fields $X$ and $X^B$ are $t$-related.

In general, the flow of a right-invariant vector field on a Lie groupoid  need not be complete. A sufficient condition for the flow of a right-invariant vector field to be complete is that the  associated smooth section of the Lie algebroid is compactly-supported. See, for instance,  Proposition~3.6 in \cite{Francis[DM]}. If $X$ is a complete, right-invariant vector field on $G \rightrightarrows M$, then the induced vector field $X^B$ on the base is complete as well. Indeed, its flow is determined by:
\begin{align*}
e^{rX^B} b = t( e^{rX} b) && r \in \R, b \in B.
\end{align*}

\subsection{Smooth Haar systems}

We now turn to Haar systems. Their general theory appears in \cite{Renault[BOOK]} and their routine adaptation to the smooth case is discussed  in various sources such as   \cite{Brylinski-Nistor}.

\begin{defn}
Let $\pi : M \to N$ be a submersion of possibly non-Hausdorff manifolds. Assume that the fibers of $\pi$ are Hausdorff and $\lambda_y$ is a smooth measure on $\pi^{-1}(y)$ for each $y \in N$. We say that $\lambda=(\lambda_y)_{y \in N}$ is a \textbf{smooth system of measures} for $\pi$ if the following condition holds: if  $x \in M$,  $U \subseteq M$ is a neighbourhood of $x$, $V \subseteq N$ is a neighbourhood of $\pi(x)$, $\pi(U)=V$ and there are diffeomorphisms $\chi_U : U \to  \R^d \times \R^k$, $\chi_V : V \to \R^d$  (so  $\dim(M)=d+k$, $\dim(N)=d$) making the following diagram commutative
\[\begin{tikzcd}
U \ar[r,"{\chi_U}"] \ar[d,"\pi"] & \R^d \times \R^k \ar[d,"{\mathrm{proj}_2}"]  \\
V \ar[r,"{\chi_V}"] & \R^d, 
\end{tikzcd} \]
then the family of measures pushed forward through $\chi_U$ is the standard volume measure of $\R^k$ copied on vertical fibers, multiplied by a smooth, positive-valued function on $\R^d \times \R^k$.
\end{defn}

Working locally, one sees that  the above definition allows for integration along fibers of bump functions, even in the non-Hausdorff case where Definition~\ref{nonHausbump} is in effect. 

\begin{lemma}
Let $\pi : M \to N$ be a submersion of possibly non-Hausdorff manifolds with Hausdorff fibers that is  equipped with a  smooth system of measures $\lambda = (\lambda_y)_{y \in N}$. Then, integration  along fibers with respect to $\lambda$ defines a linear map $\pi_!: C_c^\infty(M) \to C_c^\infty(N)$.
\end{lemma}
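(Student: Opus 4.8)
The plan is to reduce, by linearity together with Lemma~\ref{cover}, to the case of a single bump function supported in a chart adapted to $\pi$, where the assertion becomes the classical fact that fibrewise integration of a compactly-supported smooth function against a smooth family of measures produces a compactly-supported smooth function.

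First I would note that, by the local normal form for submersions, $M$ admits an open cover by Hausdorff sets $U$ each fitting into a commutative square of the kind appearing in the definition of a smooth system of measures: there is an open $V \subseteq N$ with $\pi(U) = V$, together with diffeomorphisms $\chi_U : U \to \R^d \times \R^k$ and $\chi_V : V \to \R^d$ intertwining $\pi|_U$ with the projection onto the $\R^d$ factor. Applying Lemma~\ref{cover} to this cover, an arbitrary $\varphi \in C_c^\infty(M)$ is a finite sum of functions of the form $(f \circ \chi_U)_0$ with $f \in C_c^\infty(\R^d \times \R^k)$ and $U, \chi_U, V, \chi_V$ as above; since $\pi_!$ is manifestly linear in $\varphi$, it suffices to treat such a $\varphi$.

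Next I would verify that $\pi_!(\varphi)$ is well-defined and identify it in these coordinates. For $y \in N$ the fibre $\pi^{-1}(y)$ is Hausdorff by hypothesis; the restriction $\varphi|_{\pi^{-1}(y)}$ vanishes off $U$, and on $\pi^{-1}(y)\cap U = \chi_U^{-1}(\{y\}\times\R^k)$ it equals the pullback under $\chi_U$ of a compactly-supported smooth function. Hence $\varphi|_{\pi^{-1}(y)}$ is smooth on the open set $\pi^{-1}(y)\cap U$, vanishes identically on the open complement of a compact subset of that set, and the two open sets cover $\pi^{-1}(y)$, so $\varphi|_{\pi^{-1}(y)}$ is a compactly-supported smooth function on the Hausdorff manifold $\pi^{-1}(y)$, in particular $\lambda_y$-integrable. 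By the defining property of a smooth system of measures, $\chi_U$ pushes $\lambda_y|_{\pi^{-1}(y)\cap U}$ forward to $\rho(u,\xi)\,d\xi$ on $\{u\}\times\R^k$ for a smooth positive $\rho$, so
\[ \pi_!(\varphi)\big(\chi_V^{-1}(u)\big) = \int_{\R^k} f(u,\xi)\,\rho(u,\xi)\,d\xi \qquad (u \in \R^d), \]
while $\pi_!(\varphi)(y) = 0$ for $y \notin V$ since then $\pi^{-1}(y)\cap U = \emptyset$. The function $u \mapsto \int_{\R^k} f(u,\xi)\rho(u,\xi)\,d\xi$ lies in $C_c^\infty(\R^d)$ by the standard differentiation-under-the-integral argument, its support being contained in the (compact) image of $\supp(f)$ under the first projection. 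Thus $\pi_!(\varphi)$ is the extension by zero of an element of $C_c^\infty(\R^d)$ pulled back along $\chi_V$, i.e.\ $\pi_!(\varphi) \in C_c^\infty(N)$; linearity of $\pi_!$ is immediate.

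I expect the third step to be the only genuinely delicate point: although $\varphi$ may fail to be continuous on $M$ and may have non-compact support there (Examples~\ref{nonHausex} and~\ref{supbad}), one must observe that its restriction to each individual fibre is nonetheless a well-behaved compactly-supported smooth function precisely because the fibres are Hausdorff — this is what makes the fibrewise integral, and hence the map $\pi_!$, unambiguously defined. Once this is in place, the remainder is the usual Hausdorff theory applied within one adapted chart.
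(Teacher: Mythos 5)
Your proposal is correct and follows exactly the route the paper takes: its entire proof is the one-line instruction to use Lemma~\ref{cover} to reduce to charts in which $\pi$ is a projection, and your write-up simply supplies the details (the adapted-chart computation, smoothness via differentiation under the integral, and the observation that fibrewise restrictions are honest compactly-supported smooth functions because the fibres are Hausdorff) that the paper leaves to the reader.
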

\begin{proof}
Use Lemma~\ref{cover} to reduce to charts on which $\pi$ is a projection.
\end{proof}

\begin{defn}
Let $G \rightrightarrows B$ be a Lie groupoid. A \textbf{(smooth, right) Haar system} for $G$ is a smooth system of measures $\lambda = (\lambda_b)_{b \in B}$ for the source submersion $s : G \to B$ which is furthermore right-invariant in the sense that, for every $\gamma \in G$, the right multiplication map $R_\gamma$ is a measure preserving diffeomorphism $G_{t(\gamma)} \to G_{s(\gamma)}$.
\end{defn}

Fixing a Haar system $\lambda$ for a Lie groupoid $G$ turns $C_c^\infty(G)$ into a (generally noncommutative) algebra $C_c^\infty(G,\lambda)$ with respect to the convolution product $*$ defined by either of the following equivalent integrals:
\begin{align}\label{convdef}
(f*g)(\gamma_0) = \int_{G_{t(\gamma_0)}} f( \gamma^{-1}) g (\gamma \gamma_0) \ d\lambda_{t(\gamma_0)} = \int_{G_{s(\gamma_0)}} f( \gamma_0\gamma^{-1}) g (\gamma) \ d\lambda_{s(\gamma_0)}.
\end{align}
The isomorphism class of this algebra does not depend on the choice of Haar system. Indeed, if $\lambda$ and $\lambda'$ are two Haar systems for $G\rightrightarrows B$, then there is a unique smooth, positive-valued function $\rho$ on $B$ such that $\lambda' = (\rho \circ t) \lambda$. Moreover $\rho$  can be used to define a canonical algebra isomorphism $C_c^\infty(G,\lambda) \to C_c^\infty(G,\lambda')$. Specifically: 
\[ f \mapsto  \frac{1}{(\rho \circ s)^{1/2} (\rho \circ t)^{1/2}}f : C_c^\infty(G,\lambda) \to C_c^\infty(G,\lambda'). \]
Since the convolution algebra of $G$ is independent of $\lambda$, we will tend to write $C_c^\infty(G)$ instead of $C_c^\infty(G,\lambda)$. Note it is also possible to skirt the discussion of Haar systems completely in defining the convolution algebra of $G$ if one works with appropriate densities instead of functions, see \cite{Connes[BOOK]},~Section~2.5.

\subsection{Smooth groupoid actions}

We now turn our attention to groupoid actions.

\begin{defn}\label{actdef}
Let $G\rightrightarrows M$ be a Lie groupoid. Let $M$ be a possibly non-Hausdorff manifold together with a smooth map $\tau : M \to B$. Note the fiber product 
\[ G\ltimes M \coloneqq G\times_{s,\tau}M\]
is a closed submanifold of $G \times M$ because it is  the preimage of the diagonal under the submersion $s \times \tau : G \times M \to B \times B$. A \textbf{left action} of $G$ on $M$  is a smooth product $G\ltimes  M \ni (\gamma, x)  \mapsto \gamma \cdot x \in  M$, such that $\tau(\gamma \cdot x) = t(\gamma)$ for all $(\gamma, x) \in G \ltimes M$, $\tau(x) \cdot x = x$ for all $x \in M$ and $(\gamma_1\gamma_2)\cdot x = \gamma_1 \cdot (\gamma_2 \cdot x)$ for all $\gamma_1,\gamma_2 \in G$, $x \in M$ with $s(\gamma_1) = t(\gamma_2)$, $s(\gamma_2) = \mu(x)$. 

Similarly, if $M$ is equipped with a smooth map $\sigma : M \to G$, a \textbf{right action} of $G$ on $M$ is a smooth product $M \rtimes G  \ni (x,\gamma) \mapsto x \cdot \gamma \in M$, where $M\rtimes G\coloneqq M \times_{t,\sigma} G$,  such that $\sigma( x \cdot \gamma)=s(\gamma)$ for all $(x,\gamma) \in M \rtimes G$, $x \cdot \sigma(x) =x$ for all $x \in M$ and $x \cdot (\gamma_1\gamma_2) = (x\cdot \gamma_1)\cdot \gamma_2$ for all $\gamma_1,\gamma_2 \in G$, $x \in M$ with $\sigma(x) = t(\gamma_1)$, $s(\gamma_1)=t(\gamma_2)$.

Supposing $G_1\rightrightarrows B_1$ acts on $M$ from the left with respect to a map $\tau : M \to B_1$ and $G_2\rightrightarrows B_2$ acts on $M$ from the right with respect to the map $\sigma : M \to B_2$, we say that the left and right action \textbf{commute} with one another if $\gamma_1 \cdot (x \cdot \gamma_2) = (\gamma_1 \cdot x) \cdot \gamma_2$ for all $\gamma_1 \in G_1$, $\gamma_2 \in G_2$, $x \in M$ satisfying $s_1(\gamma_1)=\tau(x)$, $\sigma(x)=t_2(\gamma_2)$. In particular, $\sigma(\gamma_1 \cdot x) = \sigma(x)$, $\tau(x \cdot \gamma_2)=\tau(x)$.
\end{defn}

\begin{rmk}
Of course, we use the notation $G\ltimes M$ for $G\times_{s,\tau}M$ because it is the \emph{transformation groupoid}. There is a small snag here because we have insisted that our groupoids have Hausdorff unit spaces and $M$, which is the unit space of $G\ltimes M$, may be non-Hausdorff. In any event, we do not need to view $G\rtimes M$ as a groupoid in its own right here.
\end{rmk}

The following diagram summarizes some of the  data in Definition~\ref{actdef}
\[ \begin{tikzcd}
G_1 \ar[d,"{s_1}"'] \ar[r,bend left=50,yshift=-2ex] & X \ar[ld,"{\tau}"] \ar[rd,"{\sigma }"']& G_2 \ar[l,bend right=50, yshift=-2ex] \ar[d,"{t_2}"]\\
B_1 & & B_2 
\end{tikzcd} \]

One obvious example of commuting left and right actions  are the left and right multiplication action of a Lie groupoid on itself. Importantly, commuting left/right actions also appear in one formulation of the notion of Morita equivalence for Lie groupoids, see \cite{Ping}.

If $G \rightrightarrows B$ acts from the left on $\tau:M \to B$, then a right-invariant vector field $X$ on $G$ determines a corresponding vector field $X^M$ on $M$ (called the \textbf{fundamental vector field}) determined by 
\begin{align*}
X^M(m) = \tfrac{d}{dr}  (e^{rX} \tau(m)) \cdot m \Big|_{r=0} && m \in M.
\end{align*}
If $X$ is complete, then $X^M$ is complete and, indeed, the flow of $X^M$ is given by:
\begin{align*}
e^{rX^M}m = (e^{rX} \tau(m)) \cdot m && r \in \R, m \in M.
\end{align*}

If a Lie groupoid $G\rightrightarrows B$ with given right Haar system acts on $\tau : M\to B$ from the left, then $C_c^\infty(M)$ becomes a left $C_c^\infty(G)$-module with respect to the product 
\[C_c^\infty(G) \times C_c^\infty(M) \ni (f,\psi) \mapsto f * \psi \in C_c^\infty(M)\] defined by 
\begin{align}\label{leftmoddef}
(f*\psi)(x) = \int_{G_{\tau(x)}} f( \gamma^{-1}) g (\gamma \cdot x) \ d\lambda_{\tau(x)}.
\end{align}
To see this formula does indeed define a product (especially in the non-Hausdorff case where things may be  less clear), it is helpful to cast the formula \eqref{leftmoddef} in slightly more abstract form, as outlined below:

\begin{enumerate}
\item Let $G\rightrightarrows B$ be a Lie groupoid acting smoothly from the left on  $\tau:M\to B$. Here, $G$ and $M$ are possibly non-Hausdorff. Let $\pi ,\alpha : G \ltimes M \to M$ be the (restrictions of)   the second factor projection and the action map, respectively. Let $\jmath : G \ltimes M \to G \ltimes M$ be defined by $\jmath(\gamma,m)=(\gamma^{-1},\gamma \cdot m)$. Then, $\jmath$ is an order-2 diffeomorphism satisfying $\alpha=\pi\circ\jmath$. Indeed, although we do not make explicit use of it because its unit space $M$ may be non-Hausdorff, $\alpha, \pi$ and $\jmath$ are the target, source and inversion map for the transformation groupoid  $G \ltimes M$. The full set of structure maps are as follows:
\begin{align*}
\text{source: } &(\gamma,m) \mapsto m  \\
\text{target: } &(\gamma,m) \mapsto \gamma\cdot m \\
\text{product: } &(\gamma',\gamma\cdot m) (\gamma,m) = (\gamma'\gamma,m)\\
\text{inversion: } &(\gamma,m)   \mapsto (\gamma^{-1},\gamma\cdot m)
\end{align*}

\item Let $\lambda$ be the Haar system for $G$ and let $\widetilde \lambda$ be the obvious smooth system of measures for $\pi$ determined by copying $\lambda$ on fibers (one has  $\pi^{-1}(m)=G_{\tau(m)} \times \{m\}$ for $m \in M$). In other words, $\widetilde \lambda$ arises from the pullback square:
\[ \begin{tikzcd}
G \ltimes M \ar[r,"{\mathrm{pr}_1}"] \ar[d,"{\pi,\widetilde \lambda}"] & G \ar[d,"{s, \lambda}"] \\
M \ar[r,"\tau"] & B
\end{tikzcd} \]

\item Using the map $\jmath$, we also endow $\alpha : G\ltimes M\to M$ with a smooth system of measures (indeed, this is the left Haar system for the transformation groupoid) so that there is a fiberwise integration map $\alpha_! : C_c^\infty(G\ltimes M) \to C_c\infty(M)$ (Proposition~\ref{convsetup}~(3)). By construction, the following diagram is commutative:
\[ \begin{tikzcd}
C_c^\infty(G \ltimes M)   \ar[r,"{\jmath_*}"] \ar[d,"{\alpha_!}"] & C_c^\infty(G \ltimes M) \ar[d,"\pi_!"] \\
C_c^\infty(M) \ar[r,equals] & C_c^\infty(M) 
\end{tikzcd} \]

\item Given $f \in C_c^\infty(G)$ and $\varphi \in C_c^\infty(M)$, define $f\ltimes \varphi$ to be the restriction of $f \otimes \varphi$ to $G\ltimes M \subseteq G\times M$. 
Using  Proposition~\ref{convsetup} (1) and (2), one has that $f \ltimes \varphi \in C_c^\infty(G\ltimes M)$.

\item In terms of the above notations, the left $C_c^\infty(G)$-module structure \eqref{leftmoddef} of $C_c^\infty(M)$ may be expressed as follows:
\begin{align}\label{abstractconvsetup}
f * \psi \coloneqq \alpha_!( f \ltimes \psi ) && f \in C_c^\infty(G), \psi \in C_c^\infty(M).
\end{align}
\end{enumerate}

Similarly, if $G\rightrightarrows B$ acts on $\sigma:M \to B$ from the right, then $C_c^\infty(M)$ becomes a right $C_c^\infty(G)$-module with respect to the product $C_c^\infty(M) \times C_c^\infty(G) \ni (\psi,f) \mapsto \psi * f \in C_c^\infty(M)$ defined by
\begin{align}\label{rightmoddef}
(\psi * f)(x) = \int_{G_{\sigma(x)}} \psi( x \cdot \gamma^{-1}) g (\gamma) \ d\lambda_{\sigma(x)}.
\end{align}
Symmetrically  to the case of left actions, there is a canonical way to equip the right action map $\beta : M \rtimes G \to M$ with a smooth system of measures in terms of which the right $C_c^\infty(G)$-module structure may be alternatively expressed as follows:
\begin{align}\label{abstractrightmoddef}
\psi * f \coloneqq \beta_!( \psi \rtimes f ) && f \in C_c^\infty(G), \psi \in C_c^\infty(M). 
\end{align}
Again, although we do not make explicit use of it, we remark that the structure maps of the right transformation groupoid $M \rtimes G$  are given by:
\begin{align*}
\text{source: } &(m,\gamma) \mapsto m \cdot \gamma \\
\text{target: } &(m,\gamma) \mapsto m \\
\text{product: } &(m,\gamma) (m \cdot \gamma,\gamma') = (m,\gamma\gamma')\\
\text{inversion: } &(m,\gamma)   \mapsto (m\cdot\gamma,\gamma^{-1})
\end{align*}

If $M$ carries commuting actions of $G_1$ from the left and $G_2$ from the right, then $(f_1 * \psi)*f_2 = f_1 * (\psi * f_2)$ is satisfied for all $f_1 \in C_c^\infty(G_1)$, $f_2\in C_c^\infty(G_2)$, $\psi \in C_c^\infty(M)$ so that $C_c^\infty(M)$ has the structure of a $C_c^\infty(G_1)$-$C_c^\infty(G_2)$-bimodule. In the special case of a groupoid acting on itself from the left and right, this recovers the usual convolution product on $C_c^\infty(G)$.

In order to check the associativity of the bimodule structure, it is useful to pass through the two-sided transformation groupoid:
\[G_1 \ltimes M \rtimes G_2  
\coloneqq
\{ (\gamma_1,m,\gamma_2) \in G_1 \times M \times G_2 : s_1(\gamma_1)=\tau(m),\sigma(m)=t_2(\gamma_2) \} \]
whose structure maps are listed below:
\begin{align*}
\text{source: } &&(\gamma_1,m,\gamma_2) &\mapsto m \cdot \gamma_2   &\\
\text{target: } &&(\gamma_1,m,\gamma_2) &\mapsto \gamma_1 \cdot m &\\
\text{inversion: } &&(m,\gamma)   &\mapsto (m\cdot\gamma,\gamma^{-1}) &\\
\text{product: } &&(\gamma_1,m,\gamma_2) (\gamma_1',m',\gamma_2') 
&= (\gamma_1\gamma_1',(\gamma_1')^{-1}\cdot m,\gamma_2\gamma_2') & \text{(where $m \cdot \gamma_2 = \gamma_1'=m'$)}  \\
&&&= (\gamma_1\gamma_1',m'\cdot(\gamma_2)^{-1},\gamma_2\gamma_2').  &
\end{align*}
Because the actions commute, we have the following commuting diagram:
\[ \begin{tikzcd}
G_1 \ltimes M \rtimes G_2 \ar[rd,"{\mu}"] \ar[r,"{\alpha \times \id}"] \ar[d,"{\id \times \beta}"] & M \rtimes G_2 \ar[d,"{\beta}"] \\
G_1 \rtimes M \ar[r,"\alpha"] & M
\end{tikzcd} \]
where 
\[ \mu(\gamma_1,m,\gamma_2) \coloneqq \gamma_1 \cdot m \cdot \gamma_2. \]
Introducing fiberwise measures in the natural way, commutativity  holds as well at the level of bump functions and integration maps:
\[ \begin{tikzcd}
C_c^\infty(G_1 \ltimes M \rtimes G_2) \ar[rd,"{\mu_!}"] \ar[r,"{(\alpha \times \id)_!}"] \ar[d,"{(\id \times \beta)_!}"] & C_c^\infty(M \rtimes G_2) \ar[d,"{\beta_!}"] \\
C_c^\infty(G_1 \rtimes M) \ar[r,"\alpha_!"] & C_c^\infty(M)
\end{tikzcd} \]
leading to the desired associativity property:
\begin{align*}
f*(\psi*g) = (f*\psi)*g = \mu_!(f \ltimes \psi \rtimes g) && f\in C_c^\infty(G_1),\psi \in C_c^\infty(M), g \in C_c^\infty(G_2).
\end{align*}

\subsection{Dixmier-Malliavin for \texorpdfstring{$\R$}{R}-actions}

Suppose the additive group $\R$ acts smoothly on a possibly non-Hausdorff smooth manifold $M$. Let $X$ be the (non-branching, complete)  vector field on $M$ generating the action. The \emph{integrated form} of the action is the representation $\pi$ of $C_c^\infty(\R)$  on $C_c^\infty(M)$ defined by:
\begin{align}\label{Rint}
(\pi(f)\psi )(m) = \int_\R f(-t) \psi (e^{tX}m) \ dt. 
\end{align}
Of course, this is a very special case of the integrated forms of  groupoid actions just discussed. As in the more general case, especially when $M$ is non-Hausdorff, in order to see $\pi(f)$ does indeed define a mapping $C_c^\infty(M) \to C_c^\infty(M)$, it helps to express this action in the more abstract form:
\begin{align*} \pi(f) \varphi = \alpha_!( f \otimes \varphi) && f \in C_c^\infty(\R), \varphi \in C_c^\infty(M) \end{align*}
Here, $\alpha : \R \times M \to M$ is the action map, made into a measured submersion so as to render the following diagram commutative:
\[ \begin{tikzcd}
C_c^\infty(\R \times  M)   \ar[r,"{\jmath_*}"] \ar[d,"{\alpha_!}"] & C_c^\infty(\R \times  M) \ar[d,"(\mathrm{pr}_2)_!"] \\
C_c^\infty(M) \ar[r,equals] & C_c^\infty(M) 
\end{tikzcd},\]
where $\jmath(t,m)=(-t,e^{tX}m)$ and $\mathrm{pr}_2$ is  a measured submersion in the obvious way (copying Lebesgue measure).

As one expects, integration by parts enables  one to move differentiation  across the convolution map $C_c^\infty(\R) \times C_c^\infty(M) \to C_c^\infty(M)$. That is, $\pi(f')\varphi = \pi(f) X\varphi$ holds, where $X$ is the vector field generating the $\R$-action. We sketch a proof of this (quite routine) fact mainly to  indicate how it works in the formalism $\pi(f) \varphi = \alpha_!( f \otimes \varphi)$ which is convenient for rigorous treatment of the non-Hausdorff case.

\begin{lemma}
Let $\R$ act smoothly on a possibly non-Hausdorff smooth manifold $M$ via the complete vector field $X$. Let $\pi$ be the associated representation of $C_c^\infty(\R)$ on $C_c^\infty(M)$.  Then, $\pi(f')\varphi = \pi(f)X\varphi$ for all $f \in C_c^\infty(\R)$, $\varphi \in C_c^\infty(\R)$.
\end{lemma}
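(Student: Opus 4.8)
The plan is to reduce the identity $\pi(f')\varphi = \pi(f)X\varphi$ to the one-variable integration-by-parts formula on $\R$, working through the abstract description $\pi(f)\varphi = \alpha_!(f \otimes \varphi)$ so that the argument is valid even when $M$ is non-Hausdorff. First I would observe that, since the vector field $X$ generates the $\R$-action, the action map $\alpha : \R \times M \to M$ satisfies $\alpha(t,m) = e^{tX}m$ and hence $\frac{\partial}{\partial t}(\varphi \circ \alpha)(t,m) = (X\varphi)(e^{tX}m) = (X\varphi \circ \alpha)(t,m)$. Equivalently, if $T$ denotes the vector field $\frac{\partial}{\partial t}$ on $\R \times M$ (pulled back from the first factor), then $T(\varphi \circ \alpha) = (X\varphi)\circ\alpha$; this is just the defining ODE for the flow, read off in the formalism. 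The key point to check is that this holds as an identity in $C_c^\infty(\R \times M)$, which is immediate since one may verify it in charts adapted to $\alpha$ (a submersion), using Lemma~\ref{cover} to reduce to the Hausdorff/Euclidean situation.

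Next I would compute $\pi(f')\varphi = \alpha_!(f' \otimes \varphi)$. The essential manipulation is to rewrite the integrand: on $\R \times M$ we have, abbreviating $F := f \otimes \varphi$, that $(f' \otimes \varphi) = T(F) - (f \otimes X\varphi)$ is \emph{not} quite what I want; rather the clean statement is $\frac{\partial}{\partial t}\big(f(-t)\,\varphi(e^{tX}m)\big) = -f'(-t)\varphi(e^{tX}m) + f(-t)(X\varphi)(e^{tX}m)$, and integrating in $t$ over $\R$ the left side vanishes (the function has compact support in $t$ once restricted to the support of $\varphi$, by Proposition~\ref{nosuppinc} and properness considerations along the $\R$-fibers), giving $\int_\R f'(-t)\varphi(e^{tX}m)\,dt = \int_\R f(-t)(X\varphi)(e^{tX}m)\,dt$, which is exactly $\pi(f')\varphi = \pi(f)X\varphi$ evaluated at $m$. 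To phrase this without pointwise integrals: the map $\jmath(t,m) = (-t, e^{tX}m)$ intertwines $\alpha_!$ with $(\mathrm{pr}_2)_!$, and under the substitution $t \mapsto -t$ the $t$-derivative picks up a sign, so the claim becomes the statement that $(\mathrm{pr}_2)_!\big(T(f \otimes \varphi)\big) = 0$ together with the product rule $T(f\otimes\varphi) = f' \otimes \varphi + \text{(lower term matching } X\varphi)$ after transporting through $\jmath$.

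So the core of the proof is two ingredients: (i) fiberwise integration against $(\mathrm{pr}_2)_!$ annihilates anything of the form $\frac{\partial}{\partial t}(\text{bump function on }\R \times M)$ — this is the fundamental theorem of calculus applied fiberwise, legitimate because $(\mathrm{pr}_2)_!$ is computed in charts where $\mathrm{pr}_2$ is a coordinate projection (Lemma~\ref{cover}), and each fiber integral $\int_\R \frac{\partial}{\partial t}h(t,m)\,dt = 0$ for $h \in C_c^\infty$ on that chart; and (ii) the flow identity $T(\varphi \circ e^{tX}) = (X\varphi)\circ e^{tX}$, i.e. the fundamental relation between $X$ as an operator on $C_c^\infty(M)$ and its flow, which is exactly what Definition~\ref{nonbranchingdef} and the surrounding discussion provide. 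The main obstacle I anticipate is purely bookkeeping in the non-Hausdorff case: one must make sure that $f \otimes \varphi \in C_c^\infty(\R \times M)$ (Proposition~\ref{convsetup}(1)), that $T$ acts on it as a genuine operator (it does, since $T$ is the pullback of the complete, non-branching field $\frac{d}{dx}$ on $\R$, and $\frac{d}{dt}$ direction involves no branching), and that the fiberwise FTC is applied only after decomposing via Lemma~\ref{cover} into pieces supported in single charts — handling the support subtleties flagged in Example~\ref{supbad} correctly. None of this is deep, but it is where care is needed; the analytic content is the trivial one-dimensional integration by parts.
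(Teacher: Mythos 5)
Your proposal is correct and follows essentially the same route as the paper: the paper's proof likewise observes that $(\mathrm{pr}_2)_!$ annihilates the image of the vector field $\langle \tfrac{d}{dt},0\rangle$ on $\R\times M$ (fiberwise fundamental theorem of calculus) and then conjugates by $\jmath(t,m)=(-t,e^{tX}m)$ to conclude that $\alpha_!$ annihilates the image of $\langle -\tfrac{d}{dt},X\rangle$, which applied to $f\otimes\varphi$ gives the identity. The only cosmetic difference is that the compact support in $t$ needed for the fiberwise integration comes simply from $f\in C_c^\infty(\R)$, not from Proposition~\ref{nosuppinc}.
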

\begin{proof}
It is clear that $(\mathrm{pr}_2)_!:C_c^\infty(\R \times M) \to C_c^\infty(M)$ maps the image of the (non-branching) vector field $\langle \frac{d}{dt} , 0 \rangle$ on $\R \times M$ to zero. Conjugating by $\jmath(t,m)=(-t,e^{tX}m)$, this gives that integration along the action map $\alpha_!  :C_c^\infty(\R \times M) \to C_c^\infty(M)$ maps the image of $\langle -\frac{d}{dt},X \rangle$ to zero, giving the desired result.
\end{proof}
The above lemma leads to the following following preliminary factorization  result. This was  also the essential ingredient in \cite{Francis[DM]}.
\begin{thm}\label{prelimfac}
Let $\R$ act  smoothly on a smooth manifold $M$ via a complete vector field $X$ and let $\pi$ be the representation of $C_c^\infty(\R)$ on $C_c^\infty(M)$ defined by \eqref{Rint}. Suppose $f_0,f_1 \in C_c^\infty(\R)$ and $P(z) = \sum_{m \geq 0} a_m z^m \in \C[[z]]$ are such that $\delta = f_0+\sum_{m \geq 0}a_m f_1^{(m)}$, as in Lemma~\ref{DMlemma}.
Then, for any $\varphi \in \dom(P(X)) \subseteq C_c^\infty(M)$, one has:
\[
\varphi = \pi(f_0)\varphi+ \pi(f_1) P(X) \varphi.
\]
\end{thm}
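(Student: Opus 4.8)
The plan is to exploit the Dirac-mass identity $\delta = f_0 + \sum_{m\geq 0} a_m f_1^{(m)}$ from Lemma~\ref{DMlemma} and push it through the integrated representation $\pi$, using the fact that $\pi(\delta)$ acts as the identity together with the compatibility between differentiation in the $\R$-variable and the vector field $X$. Concretely, for $\varphi \in \dom(P(X))$ I first want to record the (formal, then justified) identity
\[
\varphi = \pi(\delta)\varphi = \pi(f_0)\varphi + \sum_{m\geq 0} a_m\, \pi\bigl(f_1^{(m)}\bigr)\varphi,
\]
and then apply the preceding lemma $m$ times to each term, turning $\pi(f_1^{(m)})\varphi$ into $\pi(f_1)X^m\varphi$. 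Summing, the right-hand side becomes $\pi(f_0)\varphi + \pi(f_1)\sum_{m\geq 0} a_m X^m\varphi = \pi(f_0)\varphi + \pi(f_1)P(X)\varphi$, which is the claim, provided the interchange of $\pi(f_1)$ with the infinite sum defining $P(X)\varphi$ is legitimate.

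The two points that actually require care are: (1) making sense of $\pi(\delta)\varphi = \varphi$, since $\delta$ is a distribution and not an element of $C_c^\infty(\R)$, and the series $\sum_m a_m f_1^{(m)}$ converges only in the sense of compactly-supported distributions; and (2) justifying the termwise manipulations and the pull-out of $\pi(f_1)$ past the series. For (1), rather than interpreting $\pi$ on distributions, I would avoid $\delta$ altogether: since $\varphi \in \dom(P(X))$, the series $P(X)\varphi = \sum_m a_m X^m\varphi$ converges absolutely and uniformly in $C_c^\infty(M)$ (with all derivatives, and with supports contained in $\supp\varphi$, by the properties of $\dom(P(X))$ established in Section~3). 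Hence I can legitimately write $\pi(f_1)P(X)\varphi = \sum_m a_m\, \pi(f_1)X^m\varphi$ because $\pi(f_1)$ is a continuous operator on $C_c^\infty(M)$ in the relevant (uniform-on-derivatives) sense — this is where the abstract description $\pi(f_1)\psi = \alpha_!(f_1\otimes\psi)$ and the continuity of $\alpha_!$ on bump functions with controlled supports is the right tool, and it handles the non-Hausdorff case cleanly. Then apply the lemma: $\pi(f_1)X^m\varphi = \pi(f_1^{(m)})\varphi$ for each $m$, so
\[
\pi(f_1)P(X)\varphi = \sum_{m\geq 0} a_m\, \pi\bigl(f_1^{(m)}\bigr)\varphi.
\]

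It remains to identify $\pi(f_0)\varphi + \sum_m a_m \pi(f_1^{(m)})\varphi$ with $\varphi$. Here I would use that the series $f_0 + \sum_{m\geq 0} a_m f_1^{(m)}$ converges to $\delta$ in $\mathcal{E}'(\R)$, pair it against the smooth map $t\mapsto \varphi(e^{tX}m)$ (for fixed $m\in M$), which is a legitimate test function, and observe that the value of $\pi$ applied to a compactly-supported distribution $u$ is $(\pi(u)\varphi)(m) = \langle u(-t), \varphi(e^{tX}m)\rangle$; evaluating at $u=\delta$ gives $\varphi(m)$. The one subtlety is continuity of the distributional pairing versus the convergence mode asserted in Lemma~\ref{DMlemma}, but convergence in $\mathcal{E}'(\R)$ is by definition convergence when paired against any fixed smooth function, so this is immediate once one checks $t\mapsto\varphi(e^{tX}m)$ is smooth (it is, $X$ being complete). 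The main obstacle is thus not any single deep step but the bookkeeping of combining a distributional identity on $\R$ with an operator-valued series on $C_c^\infty(M)$; organizing the argument so that $\delta$ is only ever paired against genuine test functions, and the infinite series are only ever manipulated inside $C_c^\infty(M)$ where absolute convergence (with derivative control) is available from Section~3, is what makes it go through, including in the non-Hausdorff setting.
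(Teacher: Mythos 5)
Your argument is correct and is essentially the paper's own: the paper proves this by citing Theorem~8.1 of \cite{Francis[DM]}, whose content is exactly the computation you give (and which is sketched in the introduction right after Lemma~\ref{DMlemma}) — pair the distributional identity $\delta_0=f_0+\sum_m a_mf_1^{(m)}$ against the smooth orbit function $t\mapsto\varphi(e^{tX}m)$, convert $\pi(f_1^{(m)})\varphi$ to $\pi(f_1)X^m\varphi$ by the integration-by-parts lemma, and use membership in $\dom(P(X))$ to resum. The only point I would tighten is your aside that $t\mapsto\varphi(e^{tX}m)$ is smooth ``$X$ being complete'': in the non-Hausdorff case this follows not from completeness but from restricting $f\otimes\varphi\in C_c^\infty(\R\times M)$ to the (Hausdorff, closed) fiber $\alpha^{-1}(m)$, i.e.\ from the same $\alpha_!$ formalism you already invoke.
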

\begin{proof}
See Theorem~8.1  in \cite{Francis[DM]}.
\end{proof}

It will be important for us to know that, if $M$ carries a (left) $\R$-action generated by a vector field $X$ that furthermore commutes with a given right action of a groupoid $G'$, then the operators $P(X) : \dom(P(X))\to C_c^\infty(M)$ for $P(z) \in \C[[z]]$ are right-linear for the $C_c^\infty(G')$-module structure of $C_c^\infty(M)$.  The case of interest for us is where $M$ carries commuting actions of groupoids $G$ and $G'$ from the left and the right, respectively, and the vector field on $M$ is the fundamental vector field $X^M$  associated to some section $X$ of the Lie algebroid $AG$. The desired statement is Corollary~\ref{rightlinearity} below, which follows directly from the three lemmas preceding it. The proofs of the these lemmas are straightforward, and we omit them.

\begin{lemma}\label{11}
Let $\pi : M \to N$ be a measured submersion of possibly non-Hausdorff smooth manifolds. Let $X$ be a non-branching smooth vector field  on $M$ and $Y$  a non-branching smooth vector field on $N$ and suppose $X$ and $Y$ are $\pi$-related. Let $P(z) \in \C[[z]]$. Then, $\pi_!$ maps $\dom(P(X))$ into $\dom(P(Y))$ and the following diagram is commutative:
\[ \begin{tikzcd}
\dom(P(X)) \ar[r,"{P(X)}"] \ar[d,"{\pi_!}"] \ar{d} & C_c^\infty(M)  \ar[d,"{\pi_!}"]\\
\dom(P(Y)) \ar[r,"{P(Y)}"] & C_c^\infty(N) 
\end{tikzcd} \]
\qed
\end{lemma}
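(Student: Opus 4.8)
The plan is to reduce everything to a local statement in charts, where $\pi$ becomes the second-factor projection $\mathrm{proj}_2 : \R^d \times \R^k \to \R^d$ (up to the diffeomorphisms appearing in the definition of a measured submersion), and then to verify convergence and the commuting square by direct manipulation of the defining series. First I would fix $\varphi \in \dom(P(X))$ and use Lemma~\ref{cover} to write $\varphi$ as a finite sum of functions supported in charts adapted to $\pi$; by linearity of all operators involved, it suffices to treat a single such summand. On such a chart $U \cong \R^d \times \R^k$, the hypothesis that $X$ and $Y$ are $\pi$-related means $X = \sum_j b_j(x,y) \partial_{y_j}$ has no $\partial_{x_i}$ components acting ``across'' fibers in a way that interferes with $\pi_!$; more precisely, $X$ is tangent to the fibers of $\mathrm{proj}_2$... no --- $Y$ lives on the base, so the correct statement is that $\pi_*(X_p) = Y_{\pi(p)}$, and the key consequence is the intertwining identity $\pi_!(X\psi) = Y(\pi_!\psi)$ at the level of a single application, which follows from differentiating under the integral sign (the fiberwise measures being smooth, this is the content of the ``measured submersion'' hypothesis) together with the fact that $\pi$-relatedness lets one commute $X$ past integration over the fiber. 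Iterating gives $\pi_!(X^m\psi) = Y^m(\pi_!\psi)$ for all $m \geq 0$.

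Next I would address convergence, which is what upgrades the single-application identity to the statement about $\dom(P(X))$ and $\dom(P(Y))$. For this I would use Proposition part (1) of the earlier proposition on $P(X)$: the series $\sum_m a_m \frac{\partial^\alpha}{\partial x^\alpha} X^m\varphi$ converges absolutely and uniformly for every multi-index $\alpha$. Since $\pi_!$ on a chart is integration of a bump function over a compact-support variable against a smooth positive density, it is continuous for uniform convergence of a function together with all its derivatives on a fixed compact set (differentiation under the integral sign again); hence $\pi_!$ may be applied term by term, yielding
\[
\pi_!(P(X)\varphi) = \pi_!\Bigl(\sum_{m\geq0} a_m X^m\varphi\Bigr) = \sum_{m\geq0} a_m \pi_!(X^m\varphi) = \sum_{m\geq0} a_m Y^m(\pi_!\varphi),
\]
and the same computation applied after a derivative $\frac{\partial^\alpha}{\partial x^\alpha}$ on the base shows $\sum_m |a_m| \|\frac{\partial^\alpha}{\partial x^\alpha} Y^m (\pi_!\varphi)\| < \infty$, i.e. $\pi_!\varphi \in \dom(P(Y))$ and $P(Y)(\pi_!\varphi) = \pi_!(P(X)\varphi)$. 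This is exactly the commutativity of the diagram.

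The main obstacle I anticipate is bookkeeping around the non-Hausdorff case and the passage to charts: one must check that the chart decomposition from Lemma~\ref{cover} can be taken compatibly with $\pi$ (i.e. that the covering Hausdorff opens can be chosen to map to chart domains in $N$ on which $\pi$ is a projection in suitable coordinates), and that ``$\pi$-related, non-branching'' on the global manifolds really does localize to the ordinary $\mathrm{proj}_2$-related statement in each such chart — here one leans on the fact (from Section~2) that a non-branching vector field induces a genuine operator on $C_c^\infty$ whose local expressions patch, so the chartwise identities assemble correctly. Once that reduction is in place the analysis is routine: differentiation under the integral sign plus the absolute-uniform convergence already recorded for $P(X)$. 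I would state the chart reduction carefully and then refer to the standard Hausdorff computation for the analytic core, noting that the series estimates are uniform in the compact support so no issue of interchanging limits remains.
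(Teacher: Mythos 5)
The analytic superstructure of your argument is sound: localizing with Lemma~\ref{cover}, applying $\pi_!$ term by term to the absolutely and uniformly convergent series $\sum_m a_m \frac{\partial^\alpha}{\partial x^\alpha}X^m\varphi$, and deducing membership of $\pi_!\varphi$ in $\dom(P(Y))$ are all fine once the single-application identity is available. The gap is precisely at that identity. You assert that $\pi_!(X\psi)=Y(\pi_!\psi)$ ``follows from differentiating under the integral sign together with the fact that $\pi$-relatedness lets one commute $X$ past integration over the fiber.'' It does not. In a chart where $\pi$ is the projection $\R^d\times\R^k\to\R^d$ and the fiber measures are $\mu(x,y)\,dy$ with $\mu$ smooth and positive, write $X=\sum_i A_i(x)\,\partial_{x_i}+\sum_j b_j(x,y)\,\partial_{y_j}$ and $Y=\sum_i A_i(x)\,\partial_{x_i}$ (this is what $\pi$-relatedness gives). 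Differentiating under the integral and integrating the vertical term by parts yields
\[ Y(\pi_!\psi)-\pi_!(X\psi)\;=\;\int \psi\,\Bigl(\textstyle\sum_i A_i\,\partial_{x_i}\mu+\sum_j\partial_{y_j}(b_j\mu)\Bigr)\,dy , \]
which need not vanish. Already for $M=\R^2\to N=\R$, $X=Y=\partial_x$ and $\mu(x,y)=e^x$ the discrepancy is $\int\psi\,e^x\,dy\neq0$.

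What is actually needed, beyond $\pi$-relatedness, is that the local flow of $X$ carry the fiber measure $\lambda_q$ to $\lambda_{e^{tY}q}$ (equivalently, the vanishing of the divergence-type integrand above); this is an invariance condition relating $X$ to the measured structure, not a consequence of it. The condition does hold in the situation where the lemma is applied (Corollary~\ref{rightlinearity}): there the submersion is the action map $\beta:M\rtimes G'\to M$ with measures induced from a right-invariant Haar system, and since $X$ commutes with the $G'$-action, the flow of $Z=\langle X,0\rangle|_{M\rtimes G'}$ identifies $\beta$-fibers measure-preservingly. A complete proof must isolate this hypothesis and verify it there; once $\pi_!(X\psi)=Y(\pi_!\psi)$ is secured, your iteration and term-by-term summation finish the argument as written.
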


Regarding the above, we note that, if  $X$ is non-branching, it is not automatic that $Y$ is non-branching,  as may be seen in the case where $\pi$ is the quotient map from two disjoint copies of $\R$ to the line with two origins. Of course, if  $Y$ is non-branching, it is also not automatic that $X$ is non-branching (one could take $N$ to be a single point).

\begin{lemma}\label{22}
Let $M$ be a  possibly non-Hausdorff smooth manifold and let $N$ be a closed submanifold of $M$. Let $X$ be a non-branching smooth vector field on $M$ that is tangent to $N$ and denote the restriction of $X$ to $N$ by $Y$.  Let $P(z)   \in \C[[z]]$. Then, the restriction map $C_c^\infty(M) \to C_c^\infty(N)$ maps  $\dom(P(X))$ into $\dom(P(Y))$ and the following diagram is commutative:
\[ \begin{tikzcd}
\dom(P(X)) \ar[r,"{P(X)}"] \ar[d,"\mathrm{restr}"] \ar{d} & C_c^\infty(M)  \ar[d,"\mathrm{restr}"]\\
\dom(P(Y)) \ar[r,"{P(Y)}"] & C_c^\infty(N) 
\end{tikzcd} \]
\qed
\end{lemma}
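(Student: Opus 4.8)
\textbf{Proof plan for Lemma~\ref{22}.} The statement is a restriction-map analog of Lemma~\ref{11}, so I expect the proof to be a short verification that reduces to the local, Euclidean case and then invokes the coordinate-independence built into the definitions. The plan is to first reduce to the situation where $M$ is an open subset of $\R^n$, $N$ is a linear coordinate subspace $\R^{n'} \times \{0\}$, and the restriction map is just $f \mapsto f|_{N}$; this reduction is legitimate because $\dom(P(X))$ on a possibly non-Hausdorff manifold is defined chart-by-chart (as the span of extensions by zero of functions pulled back from Euclidean charts lying in $\dom(P(Y))$ for the pushed-forward vector field), and because $N$ being a closed submanifold means it is covered by slice charts; Lemma~\ref{cover} lets us write any $\psi \in C_c^\infty(M)$ as a finite sum of bump functions supported in such slice charts.

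In the local model, the key point is that since $X$ is tangent to $N$, writing $X = \sum_i X^i(x) \partial_{x^i}$ in slice coordinates $(x^1,\dots,x^{n'},x^{n'+1},\dots,x^n)$ with $N = \{x^{n'+1} = \dots = x^n = 0\}$, the tangency condition forces $X^j(x)|_{N} = 0$ for $j > n'$, so that for any smooth $f$ one has $(X^m f)|_{N} = Y^m(f|_N)$ where $Y = \sum_{i \le n'} X^i(x^1,\dots,x^{n'},0,\dots,0)\,\partial_{x^i}$ is the restricted vector field — that is, restriction to $N$ intertwines $X$ and $Y$ at every finite power. I would check this by a small induction on $m$: applying $X$ to something and then restricting agrees with first restricting (which kills the normal derivatives, since the coefficients $X^j$ vanish on $N$) and then applying $Y$. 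Then the membership claim $\psi \in \dom(P(X)) \Rightarrow \psi|_N \in \dom(P(Y))$ follows because the defining sums $\sum_m |a_m| \, \| \tfrac{\partial^\alpha}{\partial x^\alpha} X^m \psi \|$ dominate the corresponding sums $\sum_m |a_m| \, \| \tfrac{\partial^\alpha}{\partial x^\alpha} Y^m (\psi|_N) \|$ for multi-indices $\alpha$ in the $N$-directions (restriction does not increase the uniform norm of a derivative taken along $N$), and commutativity of the diagram is then immediate by termwise restriction of the absolutely-uniformly-convergent series $P(X)\psi = \sum_m a_m X^m \psi$, using the just-established identity $(X^m\psi)|_N = Y^m(\psi|_N)$.

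The only genuinely nontrivial bookkeeping item — and the step I expect to be the main obstacle — is the non-Hausdorff patching: one must make sure that the chart-by-chart definition of $\dom(P(X))$ on $M$ really does restrict to the chart-by-chart definition of $\dom(P(Y))$ on $N$, i.e.\ that when $\psi$ is written as a finite sum $\sum_k (f_k \circ \chi_k)_0$ with each $f_k \in \dom(P(Y_k))$ in a Euclidean chart $\chi_k : U_k \to \R^d$, the restrictions $(f_k \circ \chi_k)_0|_N$ are again of the admissible form for $N$. This is handled by the standard device of choosing the $U_k$ to be slice charts adapted to $N$ (possible since $N$ is a closed, hence embedded, submanifold, and guaranteed enough such charts by Lemma~\ref{cover}); for a slice chart, $\chi_k$ restricts to a Euclidean chart on $U_k \cap N$, the pushforward of $X|_{U_k}$ is tangent to the slice by hypothesis, and the argument of the previous paragraph applies verbatim in that chart. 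Charts $U_k$ disjoint from $N$ contribute zero after restriction and may be discarded. With that reduction in place, everything else is the routine termwise estimate and termwise restriction already indicated, and I would simply remark that the proof is straightforward and parallel to Lemma~\ref{11}, as the paper does.
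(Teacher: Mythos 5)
The paper does not actually prove Lemma~\ref{22}: it is stated with a \qed and the sentence ``the proofs of these lemmas are straightforward, and we omit them,'' so there is no argument to compare against. Your local computation is certainly the intended core, and it is correct: tangency gives $(X^m\psi)|_N = Y^m(\psi|_N)$ by induction on $m$ (this is even coordinate-free, since $X_p\in T_pN$ for $p\in N$ means $X_pf$ depends only on $f|_N$), restriction to a slice does not increase the sup norm of a derivative taken in the $N$-directions, and the diagram then commutes by termwise restriction of the absolutely and uniformly convergent series.

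The gap is in the step you yourself flag as the main obstacle, and the tools you cite do not close it. Lemma~\ref{cover} decomposes elements of $C_c^\infty(M)$, not of $\dom(P(X))$; to rewrite a single admissible chart function $(f\circ\chi)_0$, with $f\in\dom(P(\chi_*X))$, as a sum of pieces supported in slice charts, you must multiply $f$ by partition-of-unity cutoffs, and $\dom(P(X))$ is \emph{not} stable under multiplication by a fixed $\rho\in C_c^\infty$. Indeed $X^m(\rho f)=\sum_{i}\binom{m}{i}(X^i\rho)(X^{m-i}f)$, and the hypothesis $\sum_m|a_m|\|\partial^\alpha X^mf\|<\infty$ gives no control over the cross terms weighted by $\binom{m}{i}\|X^i\rho\|$; using Borel's theorem one can arrange $\rho^{(m)}(x_0)=1/a_m$ and $f$ with all derivatives positive at $x_0$, so that $\sum_m |a_m|\,\|X^m(\rho f)\|=\infty$ even though $f\in\dom(P(X))$. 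Nor can the cutoffs be avoided: the given chart $\chi$ need not be a slice chart, and $\chi(N\cap U)$ may be a closed submanifold of $\R^d$ (a circle, say) that no single Euclidean chart of $N$ covers, so some re-decomposition is forced. As written, then, your argument establishes the lemma only for elements of $\dom(P(X))$ presented in charts already adapted to $N$ (which is in fact the situation arising in Corollary~\ref{rightlinearity}); in full generality the membership claim $\psi|_N\in\dom(P(Y))$ needs either a presentation-independent reformulation of $\dom$ or an explicit argument that a domain-compatible slice-chart decomposition exists.
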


\begin{lemma}\label{33}
Let $M$ and $N$ be  possibly non-Hausdorff smooth manifolds. Let $X$ be a non-branching smooth vector field on $M$ and let $Y = \langle X,0\rangle$, a non-branching  smooth vector field on $M \times N$. Let $P(z)  \in \C[[z]]$.  Then, if  $f \in \dom(P(X)) \subseteq  C_c^\infty(M)$ and $g \in C_c^\infty(N)$, we have $f \otimes g \in \dom(P(Y))$ and $Y(f \otimes g) = (Xf) \otimes g$.
\qed
\end{lemma}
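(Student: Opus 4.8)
The plan is to reduce to the Euclidean situation and then do a short direct computation; I do not expect a genuine obstacle here. First I would record that $Y = \langle X,0\rangle$ is non-branching — its flow is $(t,(m,n)) \mapsto (\phi_t(m),n)$, where $\phi$ is the flow of $X$ — so that $\dom(P(Y))$ and $P(Y)$ are defined, and that $f \otimes g \in C_c^\infty(M \times N)$ by Proposition~\ref{convsetup}~(1). Since $\dom(P(X))$ is by definition the span of extensions by zero of functions $f_i \circ \chi_i$ with $\chi_i : U_i \to \R^d$ a Euclidean chart and $f_i \in \dom(P((\chi_i)_*(X|_{U_i})))$, I would fix such a finite decomposition $f = \sum_i (f_i \circ \chi_i)_0$, and, using Lemma~\ref{cover}, a finite decomposition $g = \sum_j (g_j \circ \psi_j)_0$ with $\psi_j : V_j \to \R^e$ charts of $N$ and $g_j \in C_c^\infty(\R^e)$. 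Then $f \otimes g = \sum_{i,j} \left( (f_i \otimes g_j) \circ (\chi_i \times \psi_j) \right)_0$, and since $(\chi_i \times \psi_j)_*(Y|_{U_i \times V_j}) = \langle (\chi_i)_*(X|_{U_i}),0\rangle$, the statement reduces to the case $M = \R^d$, $N = \R^e$, $Y = \langle X,0\rangle$ on $\R^d \times \R^e$.

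In the Euclidean case $Y$ differentiates in the first group of variables only, with the second group a passive parameter, so $Y(f \otimes g) = (Xf) \otimes g$ at once, hence $Y^m(f \otimes g) = (X^m f) \otimes g$ for all $m \geq 0$ by an obvious induction. To see $f \otimes g \in \dom(P(Y))$, I would split a multi-index in $\N^{d+e}$ as $(\alpha,\beta)$ with $\alpha \in \N^d$, $\beta \in \N^e$, and use $\tfrac{\partial^\alpha}{\partial x^\alpha} \tfrac{\partial^\beta}{\partial y^\beta} Y^m(f \otimes g) = \left( \tfrac{\partial^\alpha}{\partial x^\alpha} X^m f \right) \otimes \left( \tfrac{\partial^\beta}{\partial y^\beta} g \right)$, whose uniform norm is the product $\| \tfrac{\partial^\alpha}{\partial x^\alpha} X^m f \| \cdot \| \tfrac{\partial^\beta}{\partial y^\beta} g \|$. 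Summing against $|a_m|$ and pulling out the $m$-independent factor $\| \tfrac{\partial^\beta}{\partial y^\beta} g \|$ reduces finiteness to $\sum_m |a_m| \, \| \tfrac{\partial^\alpha}{\partial x^\alpha} X^m f \| < \infty$, which holds because $f \in \dom(P(X))$.

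Finally, with $f \otimes g \in \dom(P(Y))$ in hand, $P(Y)(f \otimes g) = \sum_m a_m Y^m(f \otimes g) = \sum_m a_m (X^m f) \otimes g$; since the partial sums $\sum_{m \leq N} a_m X^m f$ converge uniformly, together with all derivatives, to $P(X) f$ and $u \mapsto u \otimes g$ is uniformly continuous (indeed $\|u \otimes g\| = \|u\| \, \|g\|$), this equals $(P(X)f) \otimes g$. The displayed identity $Y(f \otimes g) = (Xf) \otimes g$ is then just the $P(z) = z$ instance, and is the key computational input; transporting back through the charts $\chi_i \times \psi_j$ recovers the non-Hausdorff statement.

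The only point requiring any care is the bookkeeping in the reduction to charts: one must check that the product chart $\chi_i \times \psi_j$ carries $Y = \langle X,0\rangle$ to $\langle (\chi_i)_*(X|_{U_i}),0\rangle$, so that each local piece $f_i \otimes g_j$ genuinely lies in the domain of the corresponding local operator, and that only finitely many indices occur. Everything else is the observation that the uniform norm is multiplicative on tensor products and that tensoring with a fixed bump function commutes both with differentiation in the remaining variables and with uniformly convergent limits.
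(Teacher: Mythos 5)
Your proof is correct; the paper omits the argument entirely (``The proofs of these lemmas are straightforward, and we omit them''), and your write-up supplies exactly the intended routine verification: reduce to product charts via the definitions of $C_c^\infty$ and $\dom(P(\cdot))$, then use $\partial^{(\alpha,\beta)}Y^m(f\otimes g)=(\partial^\alpha X^m f)\otimes(\partial^\beta g)$ and multiplicativity of the uniform norm on tensor products. Your added observation that $P(Y)(f\otimes g)=(P(X)f)\otimes g$ follows by uniform convergence is also the form of the lemma actually used in Corollary~\ref{rightlinearity}, so it is worth having made explicit.
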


\begin{cor}\label{rightlinearity}
Let $G'\rightrightarrows B$ be a possibly non-Hausdorff Lie groupoid acting smoothly from the right on a possibly non-Hausdorff smooth manifold $M$ with respect to a smooth map $\sigma : M \to B$. Let $X$ be a smooth vector field on $M$ which commutes with the right action of $G'$ on $M$. Let $P(z)  \in \C[[z]]$. Then, $\dom(P(X))$ is right $C_c^\infty(G')$-submodule of $C_c^\infty(M)$, and $P(X)(\varphi * f )= (P(X)\varphi)  *f$ for all $\varphi \in \dom(P(X))$ and $f \in C_c^\infty(M)$.
\end{cor}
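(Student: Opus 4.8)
\textbf{Proof plan for Corollary~\ref{rightlinearity}.}

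The plan is to deduce the corollary from Lemmas~\ref{11}, \ref{22} and \ref{33} by unwinding the abstract description \eqref{abstractrightmoddef} of the right module structure, $\psi * f = \beta_!(\psi \rtimes f)$, where $\beta : M \rtimes G' \to M$ is the right action map equipped with its canonical smooth system of measures. The key observation is that the hypothesis that $X$ commutes with the right $G'$-action is exactly what is needed to produce, on the fiber product $M \rtimes G' = M \times_{\sigma,t} G'$, a non-branching vector field that is simultaneously (i) the restriction of a ``product'' vector field of the type appearing in Lemma~\ref{33}, and (ii) $\beta$-related to $X$ on $M$. Once that vector field is identified, the three lemmas chain together to give both the domain-stability statement and the intertwining identity.

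First I would set $\widetilde{X} = \langle X, 0\rangle$, the non-branching vector field on $M \times G'$ obtained by lifting $X$ along the first-factor projection; its flow is $e^{r\widetilde{X}}(m,\gamma) = (e^{rX}m,\gamma)$, so it is non-branching since $X$ is. The crucial point is that $\widetilde{X}$ is tangent to the closed submanifold $M \rtimes G' \subseteq M \times G'$: a point $(m,\gamma)$ lies in $M \rtimes G'$ iff $\sigma(m) = t(\gamma)$, and because $X$ commutes with the right action we have $\sigma(e^{rX}m) = \sigma(m)$ (this is recorded in Definition~\ref{actdef}, or follows by differentiating the commutation relation), so the flow of $\widetilde{X}$ preserves $M \rtimes G'$. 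Let $X^\rtimes$ denote the restriction of $\widetilde{X}$ to $M \rtimes G'$; by Lemma~\ref{compactimpliescomplete}-style reasoning, or simply because it is the restriction of a non-branching field to an invariant closed submanifold, $X^\rtimes$ is non-branching. Again using the commutation relation, $\beta(e^{rX}m,\gamma) = (e^{rX}m)\cdot\gamma = e^{rX}(m\cdot\gamma)$, which says precisely that $X^\rtimes$ and $X$ are $\beta$-related.

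Now the argument assembles as follows. Let $\varphi \in \dom(P(X))$ and $f \in C_c^\infty(G')$. By Lemma~\ref{33} applied on $M \times G'$, we have $\varphi \otimes f \in \dom(P(\widetilde{X}))$ with $\widetilde{X}(\varphi \otimes f) = (X\varphi)\otimes f$, hence $P(\widetilde{X})(\varphi\otimes f) = (P(X)\varphi)\otimes f$. Restricting to the closed submanifold $M \rtimes G'$, Lemma~\ref{22} gives $\varphi \rtimes f \in \dom(P(X^\rtimes))$ and $P(X^\rtimes)(\varphi \rtimes f) = (P(X)\varphi)\rtimes f$. Finally, applying Lemma~\ref{11} to the measured submersion $\beta : M \rtimes G' \to M$ with the $\beta$-related pair $(X^\rtimes, X)$, we conclude $\beta_!(\varphi \rtimes f) \in \dom(P(X))$ — that is, $\varphi * f \in \dom(P(X))$, so $\dom(P(X))$ is a right $C_c^\infty(G')$-submodule — and the commuting square yields
\[ P(X)(\varphi * f) = P(X)\bigl(\beta_!(\varphi\rtimes f)\bigr) = \beta_!\bigl(P(X^\rtimes)(\varphi\rtimes f)\bigr) = \beta_!\bigl((P(X)\varphi)\rtimes f\bigr) = (P(X)\varphi) * f. \]

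I expect the main obstacle to be purely bookkeeping rather than conceptual: one must be careful that the canonical smooth system of measures on $\beta$ used in \eqref{abstractrightmoddef} is genuinely the pushforward (via the inversion diffeomorphism of the transformation groupoid) of the one on the source-type projection $M \rtimes G' \to M$, so that the measured-submersion hypothesis of Lemma~\ref{11} is met and $X^\rtimes$ is $\beta$-related to $X$ with respect to the correct structure; and, in the non-Hausdorff setting, that $\varphi \rtimes f$ really lies in $C_c^\infty(M \rtimes G')$, which is Proposition~\ref{convsetup}~(1)–(2) combined with the closedness of $M \rtimes G'$ in $M \times G'$. None of this requires new ideas beyond what has already been set up, so the proof is short once the vector field $X^\rtimes$ is in hand.
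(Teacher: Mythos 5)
Your proposal is correct and follows essentially the same route as the paper: the paper likewise sets $Y=\langle X,0\rangle$ on $M\times G'$, restricts it to the closed submanifold $M\rtimes G'$, and chains Lemmas~\ref{33}, \ref{22} and \ref{11} through the identity $\varphi * f=\beta_!(\varphi\rtimes f)$. Your write-up actually spells out more carefully than the paper why $\langle X,0\rangle$ is tangent to $M\rtimes G'$ and why its restriction is $\beta$-related to $X$ (both via the commutation hypothesis), which is a welcome addition rather than a deviation.
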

\begin{proof}
Let $\beta:M \rtimes G' \to M$  be the action map. Put  $Y \coloneqq \langle X,0\rangle$. Observe that $Y$ is a vector field on $M \times G'$  which is tangent to $M \rtimes G'$. Put $Z=Y|_{M \rtimes G'}$. Then we have
\begin{align*}
(P(X) \varphi) * f &=  \beta_!( (P(X) \varphi )\rtimes f) &&\\
&= \beta_!( (P(Y) (\varphi \otimes f))|_{M \rtimes G'}) && \text{by Lemma~\ref{22}} \\
&= \beta_!( P(Z) (\varphi \rtimes f)) && \text{by Lemma~\ref{11}} \\
&= P(X) \beta_!( \varphi \rtimes f)  && \text{by Lemma~\ref{33}} \\
&= P(X) ( \varphi * f), &&
\end{align*}
as required.
\end{proof}

\subsection{Ideals associated to invariant subsets}\label{idealsubection}

Next, we turn our attention to ideals in smooth groupoid algebras. A subset $Z$ of the unit space of a groupoid is called \textbf{invariant} if every arrow with source in $Z$ also has target in $Z$. Given a Lie groupoid $G\rightrightarrows B$ with Haar system, a closed, invariant set $Z \subseteq B$ and $p \in \N\cup\{\infty\}$,  one may consider the  $p$th order vanishing ideal $J_Z^p \subseteq C_c^\infty(G)$ consisting of the functions vanishing to (at least) $p$th order on $G_Z \coloneqq s^{-1}(Z)=t^{-1}(Z)$. We discuss these ideals in greater detail below.

\begin{defn}\label{vanishingorddef}
Let $M$ be a possibly non-Hausdorff smooth manifold, let $Z \subseteq M$ be a closed set, and let $p \in \mathbb{N}\cup\{\infty\}$.  We say that $\varphi \in C_c^\infty(M)$ \textbf{vanishes to $\textbf{p}$th order} on $Z$ if it can be decomposed as $\varphi = \sum_{i=1}^N (f_i \circ \chi_i)_0$  where  $U_i \subseteq M$ are chart neighbourhoods, $\chi_i : U_i \to \R^d$ are diffeomorphisms, and $f_i \in C_c^\infty(\R^d)$ are such that  $f_i$ and all its partial derivatives of order $<p$ vanish on $\chi_i(U_i \cap Z)$. The subscript $0$ above indicates the operation of extension by zero.  
\end{defn}

\begin{propn}
Let $G\rightrightarrows B$ be a Lie groupoid with given Haar system acting from the left on a smooth manifold $M$ with respect to a smooth map $\tau:M\to B$  so that $C_c^\infty(M)$ is a left $C_c^\infty(G)$-module. Let $Z \subseteq B$ be an invariant, closed subset and let $p,q \in \mathbb{N}\cup\{\infty\}$. If $f\in C_c^\infty(G)$ vanishes to $p$th order on $G_Z \coloneqq s^{-1}(Z) =t^{-1}(Z)$ and $\varphi \in C_c^\infty(M)$ vanishes to $q$th order on $M^Z \coloneqq \tau^{-1}(Z)$, then  $f * \varphi$ vanishes to $(p+q)$th order on $M^Z$.
\end{propn}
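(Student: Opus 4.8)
The plan is to reduce the statement to a purely local claim about functions on Euclidean space and then invoke the abstract description of convolution as fiberwise integration. First I would set up the local picture. Since vanishing to order $p$ is defined chart-by-chart (Definition~\ref{vanishingorddef}) and $f*\varphi$ is bilinear in $(f,\varphi)$, it suffices to treat the case where $f = (g\circ\chi)_0$ for a single chart $\chi:U\to\R^n$ on $G$ with $g$ vanishing to order $p$ on $\chi(U\cap G_Z)$, and $\varphi=(h\circ\psi)_0$ for a single chart $\psi:V\to\R^m$ on $M$ with $h$ vanishing to order $q$ on $\psi(V\cap M^Z)$. Using Lemma~\ref{cover} and shrinking charts, I can further arrange that $U$ and $V$ are small enough that the action map and the source/target submersions have product form, i.e.\ work in coordinates where $s$ is a projection and the action is a projection in suitable adapted coordinates, as in the definition of a smooth system of measures. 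The integrand in \eqref{leftmoddef} then becomes a genuine smooth compactly-supported function on a Euclidean fiber, multiplied by a smooth positive density, and $f*\varphi$ is its fiber integral.

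The key analytic step is then a Leibniz/Taylor estimate. Working in the adapted coordinates near a point $x_0$ with $\tau(x_0)\in Z$, write a point of $M$ as $(z,w)$ where $z$ are ``transverse to $M^Z$'' coordinates (so $M^Z$ is locally $\{z=0\}$, using that $Z$ is closed and $\tau$ a submersion, one can arrange $\tau^{-1}(Z)$ to be locally cut out by part of the coordinates, or more carefully, handle the general closed set $Z$ by the Whitney-type characterization: $h$ vanishes to order $q$ on $\psi(V\cap M^Z)$ means all derivatives of order $<q$ vanish there). The crucial geometric input is invariance: if $s(\gamma)=\tau(x)$ and $\tau(x)\in Z$ then $t(\gamma)=\tau(\gamma\cdot x)\in Z$, so the map $\gamma\mapsto \gamma\cdot x$ sends the source fiber over a point of $Z$ into $M^Z$; equivalently $G_Z$ acts on $M^Z$. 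This means that along $M^Z$ the integrand $\gamma\mapsto g(\gamma^{-1})h(\gamma\cdot x)$ involves $g$ evaluated at points of $\chi(U\cap G_Z)$, where $g$ vanishes to order $p$, \emph{and} $h$ evaluated at points of $\psi(V\cap M^Z)$, where $h$ vanishes to order $q$. Differentiating $f*\varphi$ under the integral sign in the $x$-variable, every derivative of total order $<p+q$ produces, by the Leibniz rule and the chain rule applied to the smooth maps $\gamma\mapsto\gamma^{-1}$, $(\gamma,x)\mapsto\gamma\cdot x$, a finite sum of terms each of which contains a derivative of $g$ of some order $a$ composed with a smooth function, times a derivative of $h$ of some order $b$ composed with a smooth function, with $a+b<p+q$; hence either $a<p$ or $b<q$, so each such term vanishes when $x\in M^Z$. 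Therefore all derivatives of $f*\varphi$ of order $<p+q$ vanish on $M^Z$, which is exactly the assertion (re-expressed in a chart via Definition~\ref{vanishingorddef}).

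To make the differentiation-under-the-integral rigorous in the possibly non-Hausdorff setting, I would phrase it as in the excerpt's abstract formalism: $f*\varphi = \alpha_!(f\ltimes\varphi)$, where $\alpha_!$ is fiberwise integration along the measured submersion $\alpha:G\ltimes M\to M$. One checks that $f\ltimes\varphi\in C_c^\infty(G\ltimes M)$ vanishes to order $p+q$ on the closed submanifold $\alpha^{-1}(M^Z)\cap(\text{stuff})$—more precisely, on the preimage in $G\ltimes M$ of $M^Z$ under $\alpha$, using that $\jmath$ is a diffeomorphism and that $f\ltimes\varphi$ vanishes to high order on the relevant coordinate subspaces because $f$ vanishes to order $p$ on $G_Z$ and $\varphi$ to order $q$ on $M^Z$ (here one combines vanishing orders of a tensor product: a function vanishing to order $p$ in one group of variables times one vanishing to order $q$ in a complementary group vanishes to order $p+q$ on the union of the two coordinate subspaces, an elementary Leibniz computation). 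Then I invoke the general fact that a measured submersion's fiber-integration map $\pi_!$ sends functions vanishing to order $r$ on $\pi^{-1}(S)$ to functions vanishing to order $r$ on $S$ (immediate in local coordinates where $\pi$ is a projection, by differentiating under the integral and using dominated convergence with the uniformly bounded supports).

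The main obstacle I anticipate is bookkeeping of the vanishing orders through the composition of maps—specifically, verifying cleanly that $f\ltimes\varphi$ vanishes to order $p+q$ (not just $\min(p,q)$, nor merely $p+q$ ``generically'') on the appropriate submanifold of $G\ltimes M$, and that this submanifold is exactly $\alpha^{-1}(M^Z)$ rather than something larger. The subtlety is that $f\ltimes\varphi$, a priori, only knows that $f$ vanishes on $G_Z$ (the \emph{source}-fiber preimage of $Z$) and $\varphi$ vanishes on $M^Z$; it is precisely invariance of $Z$ that forces $\{(\gamma,m): \gamma^{-1}\in\text{support directions}, \gamma\cdot m\in M^Z\}$ to coincide, up to the diffeomorphism $\jmath$, with a coordinate subspace on which both factors vanish to their full orders. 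Getting the dimension count and the coordinates right here—particularly when $Z$ is an arbitrary closed set rather than a submanifold, so that ``vanishing to order $p$ on $Z$'' must be handled via the Whitney derivative-vanishing formulation rather than via transverse coordinates—is where the real care is needed; everything else is a routine Leibniz-rule-under-the-integral argument.
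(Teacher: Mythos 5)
Your proposal is correct and follows essentially the same route as the paper: write $f*\varphi=\alpha_!(f\ltimes\varphi)$, observe that $f\otimes\varphi$ vanishes to order $p+q$ where both factors' vanishing conditions hold, use invariance of $Z$ to identify that locus inside $G\ltimes M$ with $\alpha^{-1}(M^Z)$, and note that fiberwise integration preserves vanishing order relative to preimages. One small correction: the tensor product vanishes to order $p+q$ on the \emph{product} $G_Z\times M^Z$ (the intersection of the two conditions), not on the union of the two coordinate subspaces as you wrote --- a harmless slip here, since invariance of $Z$ makes the two conditions equivalent on the fiber product $G\ltimes M$.
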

\begin{proof}
Let $\alpha : G \ltimes M \to M$ be the action map. It is simple to check in charts that $f \otimes \varphi \in C_c^\infty(G \times M)$ vanishes to order $p+q$ on $G_Z \times M^Z$. Restricting to $G \ltimes M  \subseteq G \times M$, one has that $f \ltimes \varphi$ vanishes to order $p+q$ on $(G_Z \times M^Z) \cap (G \ltimes M) =\alpha^{-1}(M^Z)$ (the latter equality uses the invariance of $Z$). The conclusion follows by expressing the convolution of $f$ and $\varphi$ in the form  $f * \varphi = \alpha_!(f \ltimes \varphi)$, as explained above.
\end{proof}

As a corollary of the above proposition, one has that the $p$th order vanishing ideals 
\[ J^p_Z \coloneqq \{ f \in C_c^\infty(G) :  \text{ $f$ vanishes to $p$th order on $G_Z$}\} \]
are indeed ideals for $Z\subseteq B$ closed and invariant and $p \in \mathbb{N} \cup \{\infty\}$. Moreover, $J^p_Z * J^q_Z \subseteq J^{p+q}_Z$. In \cite{Francis[DM]}, it was shown that this containment is an equality in the case where $Z \subseteq B$ is a closed, invariant submanifold. In particular, $J_Z^\infty*J_Z^\infty = J_Z^\infty$.  One of our goals in the present article is to show, more generally, that $J_Z^\infty$ is H-unital for $Z\subseteq B$ any closed, invariant subset.

We give one final definition/notation.  As a point of clarification, what follows are straightforward groupoid analogs of the fact that matrices of the form $\mat{* & * \\0 & 0}$, respectively $\mat{* & 0 \\ * & 0}$, constitute a right ideal, respectively left ideal, in the the algebra of $2$-by-$2$ matrices. Assume as above that $M$ carries commuting actions of $G_1$ from the left and $G_2$ from the right. Then, given a closed set $K \subseteq B_1$, we define
\begin{align*}
M^{K} &\coloneqq \tau^{-1}(K) \\
C_c^\infty(M)^{K} &\coloneqq   \{ \psi \in C_c^\infty(M) : \mathrm{supp}(\psi) \subseteq M^{K} \}. 
\end{align*}
It is straightforward to check that $C_c^\infty(M)^{K}$ is a right $C_c^\infty(G_2)$-submodule of $C_c^\infty(M)$. Symmetrically, if $K \subseteq B_2$ is a closed set, one may define
\begin{align*}
M_{K} &\coloneqq \sigma^{-1}(K) \\
C_c^\infty(M)_{K} &\coloneqq   \{ \psi \in C_c^\infty(M) : \mathrm{supp}(\psi) \subseteq M_{K} \}
\end{align*}
and  check that $C_c^\infty(M)_{K}$ is a left $C_c^\infty(G_1)$-submodule of $C_c^\infty(M)$.

\section{H-unitality of the convolution  algebra of a Lie groupoid}

In this section, we prove the first of our main results, the H-unitality of $C_c^\infty(G)$ for any Lie groupoid $G$ (Theorem~\ref{mainthm1} from the introduction). Indeed we prove a slightly stronger statement involving smooth actions of groupoids. The latter results are applicable, for instance, in  context of  Morita equivalences of Lie groupoids.

Throughout this section, $G\rightrightarrows B$ and $G'\rightrightarrows B'$ are Lie groupoids with given Haar systems and $M$ is a smooth manifold carrying commuting actions of $G$ from left and $G'$ from the right with respect to smooth maps $\tau:M \to B$ and $\sigma:M\to B'$. Thus, $C_c^\infty(M)$ has the structure of a $C_c^\infty(G)$-$C_c^\infty(G')$-bimodule. Recall as well the notations:
\begin{align*}
M^{K} &\coloneqq \tau^{-1}(K) \\
C_c^\infty(M)^{K} &\coloneqq   \{ \psi \in C_c^\infty(M) : \mathrm{supp}(\psi) \subseteq M^{K} \}. 
\end{align*}
 and the fact that $C_c^\infty(M)^{K}$ is a right $C_c^\infty(G')$-submodule of $C_c^\infty(M)$.

 \begin{lemma}
 Let $G\rightrightarrows B$ be a Lie groupoid with given Haar system. Let $X_1,\ldots,X_k \in C_c^\infty(B,AG)$, viewed as complete, right-invariant vector fields on $G$. Define $u:\R^k \times B \to G$ by 
\[ u(t_1,\ldots,t_k,b) = e^{t_1X_1}\cdots e^{t_kX_k}b \]
Suppose that $\widetilde W \subseteq \R^k \times B$ is an open set which is mapped diffeomorphically by $u$ onto an open set $W \subseteq G$. Then there is a linear bijection $\theta$ from $C_c^\infty(\widetilde W) \subseteq C_c^\infty(\R^k\times B)$ to $C_c^\infty(W) \subseteq C_c^\infty(G)$ given as pushforward by $u$, followed by multiplication by a suitable Jacobian factor with the property described below.

Let  $G$ act smoothly from the left on $\tau:M \to B$ and define 
\[ \widetilde\pi(f)\psi(m) = f(-t_k,\ldots,-t_1,\tau(e^{t_1X_1^M}\cdots e^{t_k X_k^M}m))\psi(e^{t_1X_1^M}\cdots e^{t_k X_k^M}m) .\]
Then, one has:
\[ \widetilde\pi(f) \psi = \theta(f) * \psi \]
for all $f \in C_c^\infty(\widetilde W)$ and all $\psi \in C_c^\infty(M)$.
 \end{lemma}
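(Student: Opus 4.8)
The strategy is to reduce the multi-parameter statement to the one-parameter case by a change of variables argument, then invoke the formalism of Section~4.3 and the behaviour of integrated forms under reparametrizing the flow. First I would make precise what $\theta$ is: given a chart $u : \widetilde W \to W$, pushforward $u_*$ carries $C_c^\infty(\widetilde W)$ bijectively onto $C_c^\infty(W)$, and we correct this by the Jacobian of $u$ relative to the reference measures so that the resulting map intertwines integration along the appropriate submersions. Concretely, the relevant submersion on the source side is $\mathrm{pr}_2 : \R^k \times B \to B$ with Lebesgue$\,\times$(the measure giving the Haar system back), and on the target side the source map $s : G \to B$ with its Haar system; the map $u$ is fibrewise a diffeomorphism $\R^k \times \{b\} \to G_b$ (this is exactly the statement that the $X_i$ are complete right-invariant, hence their flows restrict to diffeomorphisms of source fibres, by Lemma~\ref{hausfib} and Lemma~\ref{compactimpliescomplete}), so there is a unique smooth positive Jacobian function $j$ on $\widetilde W$ with $u_!(jg) = (u_*g)$ compatible with $s_!$; set $\theta(f) = u_*(jf)$, extended by zero.

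The heart of the matter is the identity $\widetilde\pi(f)\psi = \theta(f)*\psi$. I would prove it by writing both sides in the abstract ``restrict-then-integrate'' form of equation \eqref{abstractconvsetup}: $\theta(f)*\psi = \alpha_!\bigl(\theta(f)\ltimes\psi\bigr)$ where $\alpha : G\ltimes M \to M$ is the action map equipped with its canonical system of measures. The key geometric observation is that $u$ and the fundamental-vector-field flows $e^{t_iX_i^M}$ fit into a commuting diagram: the map $\widetilde u : \widetilde W \times_B M \to G\ltimes M$, $(t,b,m)\mapsto (u(t,b),m)$ (defined on the fibre product over $\tau$, using $\tau(m)=b$) is a diffeomorphism onto an open subset, and under $\jmath(\gamma,m) = (\gamma^{-1},\gamma\cdot m)$ it corresponds to the substitution implementing the iterated flow $m \mapsto e^{t_1X_1^M}\cdots e^{t_kX_k^M}m$ on $M$ with the base point moving accordingly. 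Because each $X_i^M$ is the fundamental vector field of $X_i$, the flow $e^{tX_i^M}m = (e^{tX_i}\tau(m))\cdot m$ is exactly the action composed with the groupoid flow, so tracking the point through $\alpha\circ\widetilde u\circ(\text{iterated }\jmath\text{'s})$ reproduces precisely the argument $e^{t_1X_1^M}\cdots e^{t_kX_k^M}m$ appearing in the definition of $\widetilde\pi(f)$, and the sign/order reversal $(-t_k,\ldots,-t_1)$ is exactly what the inversion maps $\jmath$ contribute. The Jacobian factor $j$ built into $\theta$ is precisely what is needed so that $\alpha_!$ (defined via $\jmath$ from $\pi_!$, which copies Lebesgue measure) turns the pushed-forward integrand back into the plain iterated Lebesgue integral in the $t_i$ that defines $\widetilde\pi(f)\psi$; this is the content of the commuting square relating $\jmath_*$, $\alpha_!$ and $(\mathrm{pr}_2)_!$ displayed in Section~4.3, applied $k$ times (once for each one-parameter factor) or, more efficiently, once for the composite flow $u$.

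I expect the main obstacle to be the bookkeeping in the non-Hausdorff setting: one must check that all the objects involved — $\theta(f)$, $\theta(f)\ltimes\psi$, the restriction to $G\ltimes M$, and $\alpha_!$ of it — genuinely lie in the relevant $C_c^\infty$ spaces in the sense of Definition~\ref{nonHausbump}, and that $\widetilde u$ restricts to a diffeomorphism of fibre products of open sets onto an open subset of $G\ltimes M$. Here I would lean on Lemma~\ref{cover} to reduce $\psi$ to a single chart bump function supported in a Hausdorff open set, on Proposition~\ref{convsetup}~(1),(2) to see $f\ltimes\psi \in C_c^\infty(G\ltimes M)$, and on Proposition~\ref{convsetup}~(3) for the diffeomorphism invariance that legitimizes $\theta$ and the use of $\jmath_*$. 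Once the bump functions are localized to charts, the identity becomes a genuinely Hausdorff (indeed Euclidean) computation: a change of variables in an iterated integral, which I would carry out either directly or, cleanly, by induction on $k$ using the $k=1$ case — which is essentially the content of equation \eqref{Rint} and the discussion of integrated forms of $\R$-actions in Section~4.5 applied to each one-parameter subgroup $t\mapsto e^{tX_i}$ acting on the appropriate intermediate manifold. The linear bijectivity of $\theta$ is then immediate from bijectivity of $u_*$ on the chart level and positivity of $j$.
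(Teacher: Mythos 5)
Your overall architecture is sound, and you should know that the paper itself offers no proof of this lemma: it is imported as Lemma~4.2 of \cite{Francis[DM]}, so the comparison is against what a correct argument must contain rather than against a proof in the text. The reduction to charts via Lemma~\ref{cover}, the use of the factorization $\alpha_!=\pi_!\circ\jmath_*$ from Section~4.3, and the identity $e^{tX^M}m=(e^{tX}\tau(m))\cdot m$ (whose iteration gives $e^{t_1X_1^M}\cdots e^{t_kX_k^M}m=u(t,\tau(m))\cdot m$, matching the argument of $\psi$) are all the right ingredients, and deriving the Jacobian from the change of variables rather than prescribing it in advance is the right instinct.

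The gap is that the two points where the content of the lemma actually lives are the two points you assert rather than compute. (1) The inversion formula: writing $\gamma=e^{t_1X_1}\cdots e^{t_kX_k}b$ as a product of arrows $\eta_1\cdots\eta_k$ with $\eta_i=e^{t_iX_i}(\cdot)$ and using right-invariance gives $\gamma^{-1}=e^{-t_kX_k}\cdots e^{-t_1X_1}t(\gamma)$. This reverses the order of the \emph{vector fields}, not just the parameters, so $\gamma^{-1}$ is \emph{not} $u(-t_k,\ldots,-t_1,t(\gamma))$ unless the flows commute; the set $W^{-1}$ over which the convolution integral is really supported is parametrized by $(s,c)\mapsto e^{s_1X_k}\cdots e^{s_kX_1}c$, and reconciling this with the displayed formula for $\widetilde\pi$ (which, incidentally, is also missing its integral over $t\in\R^k$) is precisely the bookkeeping the lemma exists to record. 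Saying ``the sign/order reversal is exactly what $\jmath$ contributes'' names the phenomenon without verifying it. (2) Your explicit Jacobian --- the $j=1/J$ making $s_!\circ\theta=(\mathrm{pr}_2)_!$, where $J(t,b)\,dt$ is the pullback of $\lambda_b$ under $t\mapsto u(t,b)$ --- is not what the substitution produces: one needs $j$ evaluated at the parameter point of $\gamma^{-1}$ times $J$ evaluated at the parameter point of $\gamma$ to equal $1$, i.e.\ $j(s,c)=1/J$ at $((-s_k,\ldots,-s_1),\,t(u(s,c)))$, which differs from $1/J(s,c)$ already for a non-unimodular Lie group ($B$ a point). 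Since the statement only asks for \emph{some} suitable factor, the cure is to let the computation define $j$ and then check it is well defined, smooth and positive (whence bijectivity of $\theta$); but as written your two characterizations of $j$ conflict and the explicit one is wrong. A smaller slip: $u$ is not a diffeomorphism of $\R^k\times\{b\}$ onto $G_b$; it is only assumed to restrict to a diffeomorphism of $\widetilde W$ onto $W$.
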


We will   generalize the following result from \cite{Francis[DM]}.

\begin{thm}[Theorem~5.1, \cite{Francis[DM]}]
Suppose $G \rightrightarrows B$ is a Lie groupoid with a given   Haar system and $M$ is a smooth manifold equipped with a left action of $G$. Thus, $C_c^\infty(M)$ has the structure of a $C_c^\infty(G)$-module. Then, for every $\varphi \in C_c^\infty(M)$, there exist $f_1,\ldots,f_N \in C_c^\infty(G)$ and $\psi_1, \ldots, \psi_N \in C_c^\infty(M)$ such that 
\[ \varphi = f_1 * \psi_1 + \ldots +  f_N * \psi_N. \] 
Moreover, this factorization can be taken such that, for all $i$, $\mathrm{supp}(\psi_i) \subseteq \mathrm{supp}(\varphi)$ and   $\mathrm{supp}(f_i) \subseteq W$, where $W$ is a prescribed open subset of $G$ containing $\tau( \mathrm{supp}(\varphi))$.
\end{thm}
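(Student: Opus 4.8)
The plan is to reduce, by a localization argument, to a model in which $\varphi$ is supported near a single point of $M$, to parametrize the action there by the flows of finitely many right-invariant vector fields, and then to factor one flow direction at a time using the one-parameter Dixmier--Malliavin result (Theorem~\ref{prelimfac}). First I would localize: using Lemma~\ref{cover}, together with ordinary partitions of unity inside Hausdorff charts, write $\varphi$ as a finite sum of pieces, each supported in a small open $V\subseteq M$ chosen so that $\tau(V)$ lies in a coordinate chart of $B$ over which $AG$ is trivial; with the appropriate care in the non-Hausdorff case (this is the point where one must be careful) one arranges these pieces to have support contained in $\mathrm{supp}(\varphi)$. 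By linearity it then suffices to treat such a localized $\varphi$, supported near a point $m_0\in\mathrm{supp}(\varphi)$, noting that $\tau(\mathrm{supp}(\varphi))$ is then a small compact (Proposition~\ref{pointset}) neighbourhood of $\tau(m_0)$, which we may assume to lie inside $W$.

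Next I would set up the local parametrization. Choose sections $X_1,\dots,X_k$ of $AG$ forming a frame near $\tau(m_0)$ and cut them off to be compactly supported, so that the associated right-invariant vector fields are complete and non-branching (Lemma~\ref{compactimpliescomplete}); let $Y_j\coloneqq X_j^M$ be the induced fundamental vector fields on $M$, again complete and non-branching. The map $u(t,b)=e^{t_1X_1}\cdots e^{t_kX_k}b$ has invertible differential at $(0,\tau(m_0))$ --- its image being spanned by $X_1(\tau(m_0)),\dots,X_k(\tau(m_0))\in A_{\tau(m_0)}G$ together with $T_{\tau(m_0)}B$ --- hence restricts to a diffeomorphism of a neighbourhood $\widetilde W\subseteq\R^k\times B$ of $\{0\}\times\tau(\mathrm{supp}(\varphi))$ onto an open $W_0\subseteq W$, and, shrinking, we may assume $\widetilde W\supseteq(-\varepsilon,\varepsilon)^k\times V_0$ for some small $\varepsilon>0$ and some open $V_0\supseteq\tau(\mathrm{supp}(\varphi))$. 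The preceding Lemma then supplies a map $\theta\colon C_c^\infty(\widetilde W)\to C_c^\infty(W_0)$ with $\theta(f)*\psi=\widetilde\pi(f)\psi$, where $\widetilde\pi$ is the model integrated action built from the flows of the $Y_j$.

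Then I would run the iterated factorization. Apply Lemma~\ref{fast enough decay} to the singleton $\{\varphi\}$ and the fields $Y_1,\dots,Y_k$ to get a decay sequence $(c_m)$, then apply Lemma~\ref{DMlemma} to write $\delta_0=g_0+\sum_m a_m g_1^{(m)}$ with $|a_m|\le c_m$ and $g_0,g_1$ supported in $(-\varepsilon,\varepsilon)$; put $P(z)=\sum_m a_m z^m$, so that $\varphi$ lies in the domain of every composite of the operators $P(Y_j)$. Applying Theorem~\ref{prelimfac} to the $\R$-action generated by $Y_j$, one direction at a time --- replacing each running summand $\pi_j(\cdot)\,\eta$ by $\pi_j(g_0)\eta+\pi_j(g_1)P(Y_j)\eta$, where $\pi_j$ is the one-parameter integrated action of $Y_j$ --- yields a finite expansion $\varphi=\sum_{\epsilon\in\{0,1\}^k}\pi_1(g_{\epsilon_1})\cdots\pi_k(g_{\epsilon_k})\,\psi_\epsilon$, where $\psi_\epsilon\in C_c^\infty(M)$ is obtained from $\varphi$ by applying $P(Y_j)$ for each $j$ with $\epsilon_j=1$, and $\mathrm{supp}(\psi_\epsilon)\subseteq\mathrm{supp}(\varphi)$ because the $P(Y_j)$ do not enlarge supports (Proposition~\ref{nosuppinc}). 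Choosing $\rho\in C_c^\infty(B)$ supported in the $B$-chart with $\rho\equiv1$ near $\tau(\mathrm{supp}(\varphi))$, one identifies (after the appropriate reindexing of the $X_j$) the composite $\pi_1(g_{\epsilon_1})\cdots\pi_k(g_{\epsilon_k})\psi_\epsilon$ with $\widetilde\pi(g_{\epsilon_1}\otimes\cdots\otimes g_{\epsilon_k}\otimes\rho)\psi_\epsilon=\theta(g_{\epsilon_1}\otimes\cdots\otimes g_{\epsilon_k}\otimes\rho)*\psi_\epsilon$, the factor $\rho\circ\tau$ being $1$ wherever $\psi_\epsilon\ne0$. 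Setting $f_\epsilon\coloneqq\theta(g_{\epsilon_1}\otimes\cdots\otimes g_{\epsilon_k}\otimes\rho)$ gives $\varphi=\sum_\epsilon f_\epsilon*\psi_\epsilon$ with $\mathrm{supp}(f_\epsilon)\subseteq W_0\subseteq W$ and $\mathrm{supp}(\psi_\epsilon)\subseteq\mathrm{supp}(\varphi)$, as required.

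The analytic heart --- the one-parameter case and the control of operator domains --- is already packaged in Theorem~\ref{prelimfac} and the estimates of Section~3, so the real difficulty is organizational. I expect the main obstacle to be checking that the $k$-fold iteration of one-parameter factorizations genuinely assembles into convolution by a \emph{single} function on $G$ supported in the prescribed open set $W$; this is precisely the role of the preceding Lemma, and the delicate points are (i) aligning the order in which the non-commuting flows $e^{tY_j}$ are composed with the order of the variables of the model operator $\widetilde\pi$, together with the bookkeeping of the extra base factor $\rho\circ\tau$, and (ii) performing the localization of Step~1 so that all the $\psi_\epsilon$ retain support inside $\mathrm{supp}(\varphi)$ --- the step where the non-Hausdorff setting demands care, but which is ultimately forced by the fact that every operation used in the argument is support-non-increasing.
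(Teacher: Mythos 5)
Your Steps 2 and 3 — trivializing $AG$ locally, parametrizing a neighbourhood of the units by $u(t,b)=e^{t_1X_1}\cdots e^{t_kX_k}b$, converting the composed one-parameter integrated actions into convolution by a single $\theta(g_{\epsilon_1}\otimes\cdots\otimes g_{\epsilon_k}\otimes\rho)\in C_c^\infty(W)$, and iterating Theorem~\ref{prelimfac} with domains controlled by Lemma~\ref{fast enough decay} — are exactly the paper's argument, including the reversal of variables in $\widetilde\pi$ and the auxiliary cutoff $\rho$ on $B$.

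The gap is in your Step 1, the localization. You propose to cut $\varphi$ into pieces supported in small opens of $M$ using Lemma~\ref{cover} and ``partitions of unity inside Hausdorff charts,'' and you assert that ``with the appropriate care'' the pieces can be arranged to have support inside $\mathrm{supp}(\varphi)$. When $M$ is non-Hausdorff (which is the relevant case, since the application is $M=G$), this cannot be done in general: $C_c^\infty(M)$ is not closed under pointwise multiplication by bump functions on $M$, and Example~\ref{supbad} exhibits a $\varphi\in C_c^\infty(M)$ with $\mathrm{supp}(\varphi)\subseteq U$ for an open chart $U$ such that $\varphi\notin C_c^\infty(U)$ — so \emph{every} decomposition of that $\varphi$ into chart pieces involves summands whose supports escape $\mathrm{supp}(\varphi)$, destroying the ``Moreover'' clause. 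The paper avoids this by never localizing on $M$ at all: it localizes on the Hausdorff base $B$, choosing a partition of unity $\rho_1,\dots,\rho_N$ on $B$ subordinate to a cover over which $AG$ is trivial and replacing $\varphi$ by the pieces $\rho_i\cdot\varphi=(\rho_i\circ\tau)\varphi$. This is a legitimate operation on $C_c^\infty(M)$ by Proposition~\ref{convsetup}(4), it manifestly does not enlarge supports, and it reduces to the locally trivial case without ever invoking partitions of unity on $M$. Replacing your Step 1 by this base localization repairs the argument; the rest of your proof then goes through as written.
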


As in the result above, we shall consider the more  general situation of a smooth groupoid action. This is done with an eye to certain applications to smooth Morita equivalences which may be explored elsewhere. The case of interest here in this article is that of a groupoid acting on itself.

\begin{thm}\label{actionunitality}
Let $\mathscr{P}$ be any finite subset of $C_c^\infty(M)$. Define  $K = \bigcup_{\varphi \in \mathscr{P}} \tau( \operatorname{supp}(\varphi))$ and let $W$ be any open subset of $G$ containing $K$. Then, there exist:
\begin{enumerate}
\item $f_1,\ldots,f_N \in C_c^\infty(W) \subseteq C_c^\infty(G)$.
\item a right $C_c^\infty(G')$-submodule $A_0$ with $\mathscr{P} \subseteq A_0 \subseteq C_c^\infty(M)^K$,
\item right $C_c^\infty(G')$-linear maps $\Psi_1,\ldots,\Psi_N : A_0 \to C_c^\infty(M)$ that do not increase supports
\end{enumerate}
such that, for all $\varphi \in A_0$, we have $\varphi = f_1 * \Psi_1(\varphi) + \ldots + f_N * \Psi_N(\varphi)$.
\end{thm}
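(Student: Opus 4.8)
The plan is to reduce to the one-parameter (Dixmier--Malliavin) case handled by Theorem~\ref{prelimfac}, iterating over a local product decomposition of $G$ into flows of right-invariant vector fields, and to track right $C_c^\infty(G')$-linearity throughout by means of Corollary~\ref{rightlinearity}. First I would localize: using a partition-of-unity argument on the Hausdorff base $B$ (valid since $B$ is Hausdorff), and the fact that the finite set $K=\bigcup_{\varphi\in\mathscr P}\tau(\supp\varphi)$ is compact by Proposition~\ref{pointset}, cover $K$ by finitely many open sets $W_j\subseteq W$ each of which is the diffeomorphic image under a map $u_j(t_1,\dots,t_k,b)=e^{t_1X_1^{(j)}}\cdots e^{t_kX_k^{(j)}}b$ of an open $\widetilde W_j\subseteq \R^k\times B$, for suitable compactly-supported sections $X_1^{(j)},\dots,X_k^{(j)}$ of $AG$ (extended to complete right-invariant vector fields). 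Decomposing each $\varphi\in\mathscr P$ accordingly via Lemma~\ref{cover}, it suffices to treat a single such chart, i.e. to factor functions supported in one $M^K\cap(\text{appropriate locus})$ through $C_c^\infty(W_j)$, since finite sums of factorizations recombine into the desired form and the support conditions are preserved.

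Next, within a single chart $u=u_j$ I would peel off the flows $e^{tX_i^M}$ one variable at a time. The key identity is the one recorded in the (unproved) Lemma just before Theorem~5.1: the twisted integrated action $\widetilde\pi(f)\psi$ built from $u$ equals $\theta(f)*\psi$ for a Jacobian-corrected pushforward $\theta:C_c^\infty(\widetilde W)\to C_c^\infty(W)$. So factoring with respect to $*$ on $G$ reduces to factoring $\widetilde\pi$, which is literally a composition of $k$ commuting one-parameter integrated actions in the variables $t_1,\dots,t_k$. For each such $\R$-action, generated by the fundamental vector field $X_i^M$ on $M$ (which is complete and non-branching since $X_i^{(j)}$ is compactly supported, and which commutes with the right $G'$-action because it comes from a left-action vector field), I apply Lemma~\ref{DMlemma} to get $f_0^{(i)},f_1^{(i)}\in C_c^\infty(\R)$ supported near $0$ and coefficients $a_m^{(i)}$ with $\delta_0=f_0^{(i)}+\sum_m a_m^{(i)}(f_1^{(i)})^{(m)}$, then Theorem~\ref{prelimfac} gives $\psi=\pi_i(f_0^{(i)})\psi+\pi_i(f_1^{(i)})P_i(X_i^M)\psi$ on $\dom(P_i(X_i^M))$. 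Crucially, by Corollary~\ref{rightlinearity} the operator $P_i(X_i^M)$ is right $C_c^\infty(G')$-linear and support-non-increasing, and $\pi_i(f)$ is left convolution, hence also right $C_c^\infty(G')$-linear.

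The domain issue is what forces the "for every finite $\mathscr P$" formulation and the introduction of $A_0$: composing the $k$ one-parameter factorizations produces operators that are iterated compositions $P_1(X_1^M)\cdots$ etc., and more importantly, after the first peeling the "remainder" functions $P_i(X_i^M)\psi$ must themselves lie in the domains needed for the subsequent steps. Here Lemma~\ref{fast enough decay} is exactly the tool: given the finite set $\mathscr P$ (pulled back into the chart), choose the coefficient sequences $(a_m^{(i)})$ to decay fast enough that all the relevant iterated domains contain every $\varphi\in\mathscr P$ simultaneously; since Lemma~\ref{DMlemma} permits the $a_m$ to be arbitrarily small, this is compatible with the Dixmier--Malliavin representations. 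One then sets $A_0$ to be the intersection over all the finitely many charts and all the relevant operator compositions of the corresponding domains (each a right $C_c^\infty(G')$-submodule by Corollary~\ref{rightlinearity}), intersected with $C_c^\infty(M)^K$; this is a right $C_c^\infty(G')$-submodule containing $\mathscr P$. The maps $\Psi_i$ are the resulting compositions of the $P(X^M)$'s (times the $\theta$-pushforwards of the $f_1$'s at intermediate stages) — right $C_c^\infty(G')$-linear and support-non-increasing by the cited results — and the $f_i\in C_c^\infty(W)$ are the $\theta_j$-pushforwards of products $f_{\epsilon_1}^{(1)}\otimes\cdots\otimes f_{\epsilon_k}^{(k)}\otimes(\text{bump on }B)$, finitely many of them.

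I expect the main obstacle to be bookkeeping rather than conceptual: correctly organizing the iterated peeling so that at each stage the intermediate remainder still has support inside the chart (and inside $M^K$) and still lies in the domain required for the next flow — in other words, verifying that the finitely many domain conditions arising across all charts and all $k$ steps can be met by a single choice of coefficient sequences, which is precisely what Lemma~\ref{fast enough decay} delivers once one enumerates the (finitely many) vector fields and the (finitely many) compositions involved. A secondary nuisance is the non-Hausdorff total space of $G$ and possibly-non-Hausdorff $M$: this is handled by systematically using Lemma~\ref{cover}, Proposition~\ref{convsetup}, and the abstract "$\alpha_!$" formalism for the module structures, so that every manipulation takes place either in a Euclidean chart or via the fiberwise-integration maps, never relying on global smoothness of bump functions.
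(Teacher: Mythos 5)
Your proposal follows essentially the same route as the paper: localize over the Hausdorff base $B$ so that $AG$ is trivialized near the compact set $K$, use the exponential chart $u(t_1,\dots,t_k,b)=e^{t_1X_1}\cdots e^{t_kX_k}b$ and the identity $\widetilde\pi(f)\psi=\theta_W(f)*\psi$ to reduce convolution factorization to an iterated one-parameter Dixmier--Malliavin factorization (Lemma~\ref{DMlemma} combined with Theorem~\ref{prelimfac}), control all the iterated domains simultaneously via Lemma~\ref{fast enough decay}, and obtain right $C_c^\infty(G')$-linearity and support control of the $\Psi$'s from Corollary~\ref{rightlinearity} and Proposition~\ref{nosuppinc}. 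One caveat: the localization must be implemented by the linear operators $\varphi\mapsto\rho_i\cdot\varphi$ coming from the $C^\infty(B)$-module structure on $C_c^\infty(M)$ (which commute with the right $C_c^\infty(G')$-action), as the paper does, rather than by the non-canonical decomposition of Lemma~\ref{cover} that you invoke, since the latter is a mere existence statement and would not produce well-defined right $C_c^\infty(G')$-linear maps $\Psi_i$ on a common submodule $A_0$.
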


We remark that, in view of Example~\ref{supbad}, (1) can in general be a stronger assertion than:  $f_i \in C_c^\infty(G)$ and $\supp f_i \subseteq W$. That being said, since the unit space of $G$ admits a Hausdorff neighbourhood (Proposition~\ref{hdrffnbhd}), this distinction is not important here as $W$ may without loss of generality be assumed to be Hausdorff.

\begin{proof}
Use the map $\tau : M \to B$ to endow $C_c^\infty(M)$ with the structure of a left $C^\infty(B)$-module as follows:
\begin{align*}
(\rho \cdot \varphi)(x) = \rho(\tau(x))\varphi(x) && \rho \in C^\infty(B), \varphi \in C_c^\infty(M), x \in M.
\end{align*}
It is simple to confirm that the left $C^\infty(B)$-module structure  on $C_c^\infty(M)$ commutes with the right $C_c^\infty(G')$-module structure.

Note that $K \subseteq B$ is compact by Proposition~\ref{pointset}. First we argue that it suffices to prove this theorem under the additional hypothesis that the Lie algebroid $AG$ is trivial (as a vector bundle) over some open neighbourhood of $K$. Indeed, assume this special case is already proven, and choose smooth, compactly-supported  functions $\rho_1,\ldots, \rho_N : B \to [0,1]$ such that $\sum_{i=1}^N \rho_i = 1$ holds on $K$ and  $AG$ is trivial on a neighbourhood of $\operatorname{supp}(\rho_i)$ for each $i$.  Define $\mathscr{P}^{(i)} = \{ \rho_i \cdot \varphi : \varphi \in \mathscr{P} \}$ for $i=1,\ldots,N$. By hypothesis, for $1 \leq i \leq N$, there exist
\begin{enumerate}
\item $f_1^{(i)},\ldots,f^{(i)}_{N_i} \in C_c^\infty(G)$ with supports contained in $W$, 
\item A right $C_c^\infty(G')$-submodule $A_0^{(i)} \subseteq C_c^\infty(M)$ with $\mathscr{P}^{(i)} \subseteq A_0^{(i)} \subseteq C_c^\infty(M)^K$,
\item Right $C_c^\infty(G')$-linear maps $\Psi_1^{(i)},\ldots,\Psi_{N_i}^{(i)} : A_0^{(i)} \to C_c^\infty(M)$ that do not increase supports
\end{enumerate}
such that, for all $\varphi \in A_0^{(i)}$, 
\[ \varphi = \sum_{j=1}^{N_i} f_j^{(i)} * \Psi_j(\varphi)  \]
Define:
\begin{align*}
A_0 &= \{ \varphi \in C_c^\infty(M)^K  : \rho_i \cdot \varphi  \in A_0^{(i)} \text{ for } i=1,\ldots,N \}.
\end{align*}
By definition, $\mathscr{P} \subseteq A_0 \subseteq C_c^\infty(M)^K$, and it is easy to check that $A_0$ is a right  $C_c^\infty(G')$-submodule. 

For $1 \leq i \leq N, 1 \leq j \leq N_i $, define  maps $\Psi_{ij} :A_0 \to C_c^\infty(M)$ by 
\[  \Psi_{ij}(\varphi) = \Psi_j^{(i)}(\rho_i \cdot \varphi). \]
It is easy to check these are linear maps which do not increase supports and are right $C_c^\infty(G')$-linear. Moreover, if $\varphi \in A_0$ 
\[ \varphi  = \sum_{i=1}^N \rho_i \cdot \varphi 
= \sum_{i=1}^N  \sum_{j=1}^{N_i} f^{(i)}_j * \Psi_j(\rho_i \cdot \varphi) 
= \sum_{i=1}^N \sum_{j=1}^{N_i} f_j^{(i)} * \Psi_{ij}(\varphi).
\]

It remains to check the case where $AG$ is trivial over a neighbourhood of $K$.  Then, by inverse function theorem,  there exists an open set $U \subseteq B$ with $K \subseteq U$ and  (complete by Lemma~\ref{compactimpliescomplete}) right-invariant vector fields $X_1,\ldots,X_k \in C_c^\infty(B,AG)$  such that 
\begin{align*}
u : \R^k \times B \to G && u(t_1,\ldots,t_k,b) = e^{t_1X_1} \cdots e^{t_k X_k} b 
\end{align*}
maps $\widetilde W \coloneqq (-1,1)^k \times U$ diffeomorphically onto an open subset of $G$ contained in $W$. Indeed by shrinking $W$ we can, and do, assume that $W=u(\widetilde W)$.

Let $X_1^M,\ldots, X_k^M$ denote the corresponding complete vector fields on $M$ and let $\pi_1,\ldots, \pi_k$ denote the corresponding representations of $C_c^\infty(\R)$ on $C_c^\infty(M)$ given by \eqref{Rint}. Because the action of $G$ on $M$ commutes with the action of $G'$ on $M$, the $\R$ actions determined by the fundamental vector fields $X_1^M,\ldots,X_k^M$ commute with the action of $G'$ as well.

By Lemma~4.2 in \cite{Francis[DM]}, there is a linear bijection $\theta_W$ from $C_c^\infty(\widetilde W) \subseteq C_c^\infty(\R^k\times B)$ to $C_c^\infty(W) \subseteq C_c^\infty(G)$ given as pushforward by $u$, followed by multiplication by a suitable smooth Jacobian factor such that
\[ \widetilde\pi(f) \psi = \theta_W(f) * \psi \]
for all $f \in C_c^\infty(\widetilde W)$ and all $\psi \in C_c^\infty(M)$, where
\[ \widetilde\pi(f)\psi(m) \coloneqq f(-t_k,\ldots,-t_1,\tau(e^{t_1X_1^M}\cdots e^{t_k X_k^M}m))\psi(e^{t_1X_1^M}\cdots e^{t_k X_k^M}m) .\]
In particular, fixing $\rho \in C_c^\infty(B)$ such that $\rho=1$ on $K$ (so that $\rho\cdot \varphi = \varphi$ for all $\varphi \in C_c^\infty(M)^K$) and $\supp(\rho)\subseteq U$, we get that
\begin{align}\label{chartconv}
 \pi_1(f_1) \cdots \pi_k(f_k) \psi =  \theta_W( f_k \otimes \ldots \otimes f_1 \otimes \rho) * \psi \end{align}
for all $f_1, \ldots,f_k \in C_c^\infty(-1,1)\subseteq C_c^\infty(\R)$ and all $\psi \in C_c^\infty(M)^K$ (see also Lemmas 4.1 and 4.3 in \cite{Francis[DM]}).

By Lemma~\ref{fast enough decay}, there exists a sequence of positive reals $(c_m)_{m \geq 0}$ such that, for any formal series $P(z) = \sum_{m \geq 0 } a_m z^m$ with $|a_m|\leq c_m$, we have
\[ \mathscr{P} \subseteq  \mathrm{dom}( P(X_{i_1}^M) \cdots P(X_{i_n}^M) ) \] 
for all $n, i_1,\ldots,i_n \geq 1$. Using Lemma~\ref{DMlemma}, write $\delta_0 = f_0 +\sum_{m \geq 0} f_1^{(m)}$ where $f_0,f_1 \in C_c^\infty(-1,1) \subseteq C_c^\infty(\R)$   and $|a_m| \leq c_m$. By repeated application of Theorem~\ref{prelimfac}, we obtain
\begin{align*}
 \varphi 
=& \pi_1(f_0) \varphi + \pi_1(f_1) P(X^M_1) \varphi \\
=& \pi_1(f_0) \pi_2(f_0) \varphi  + \pi_1(f_0) \pi_2(f_1) P(X^M_2) \varphi  \\
&+ \pi_1(f_1) \pi_2(f_0) P(X_M^1) \varphi + \pi_1(f_1)\pi_2(f_1)P(X^M_2)P(X^M_1) \varphi \\
\vdots& \\
=& \sum_{i_1, \ldots,i_k \in \{0,1\}} \pi_1(f_{i_1}) \cdots \pi_k(f_{i_k}) P(X^M_k)^{i_k} \cdots P(X^M_1)^{i_1} \varphi 
\end{align*}
for all $\varphi \in \mathscr{P}$, with the understanding that $P(X_i)^0=\id$. Thus, using \eqref{chartconv},
\[ \varphi = \sum_{i_1, \ldots,i_k \in \{0,1\}} f_{i_1,\ldots,i_k} *  \Psi_{i_1,\ldots,i_k} ( \varphi ) \]
where 
\begin{align*}
f_{i_1,\ldots,i_k} &\coloneqq \theta_W( f_1 \otimes \cdots \otimes f_k \otimes \rho) \subseteq C_c^\infty(W) \subseteq C_c^\infty(G) \\ 
A_0 &\coloneqq  \bigcap_{i_1, \ldots,i_k \in \{0,1\}} \dom\left( P(X^M_k)^{i_k} \cdots P(X^M_1)^{i_1} \right) \cap C_c^\infty(M)^K \\
\Psi_{i_1,\ldots,i_k} &\coloneqq P(X^M_k)^{i_k} \cdots P(X^M_1)^{i_1}|_{A_0}
\end{align*}
By Corollary~\ref{rightlinearity}, we have that $A_0$ is a right $C_c^\infty(G')$-submodule of  $C_c^\infty(M)^K$  and that the maps $\Psi_{i_1,\ldots,i_k}$ are right $C_c^\infty(G')$-linear, as required. They do not increase supports by Proposition~\ref{nosuppinc}.
\end{proof}

Specializing to the case where $G$ acts on itself from the right and left, Theorem~\ref{actionunitality} amounts to the following.

\begin{cor}\label{Gfactor}
Let $G \rightrightarrows B$ be a Lie groupoid with a given Haar system. Let $\mathscr{P}$ be a finite subset of $C_c^\infty(G)$ and put $K \coloneqq \bigcup_{\varphi\in\mathscr{P}}t(\supp \varphi)$ (by Proposition~\ref{pointset}, $K$ is a compact subset of $B$) and let $W\subseteq G$ be an open set with $K \subseteq W$ (by Lemma~\ref{hdrffnbhd}, we may assume $W$ is Hausdorff). Then, there exist:
\begin{enumerate}
\item $f_1,\ldots,f_N \in C_c^\infty(W) \subseteq C_c^\infty(G)$,
\item a right ideal $A_0 \subseteq C_c^\infty(G)$ with $\mathscr{P} \subseteq A_0 \subseteq C_c^\infty(G)^K$,
\item right $C_c^\infty(G)$-linear maps $\Psi_1,\ldots,\Psi_N:A_0 \to C_c^\infty(G)$ that do not increase supports
\end{enumerate}
such that, for all $\varphi \in A_0$, we have $\varphi = f_1*\Psi_1(\varphi)+\ldots+f_N*\Psi_N(\varphi)$. \qed
\end{cor}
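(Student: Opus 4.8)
The plan is to derive this statement as the special case of Theorem~\ref{actionunitality} in which $M=G$, the groupoid $G$ acts on itself from the left by left translation, and $G'=G$ acts on $M=G$ from the right by right translation. Concretely, one takes $\tau=t:G\to B$ and $\sigma=s:G\to B$, with $\gamma\cdot\eta\coloneqq\gamma\eta$ (defined precisely when $s(\gamma)=t(\eta)=\tau(\eta)$) and $\eta\cdot\gamma\coloneqq\eta\gamma$ (defined precisely when $s(\eta)=\sigma(\eta)=t(\gamma)$). Checking these against Definition~\ref{actdef} is immediate, and the two actions commute by the associativity of groupoid multiplication, $\gamma_1(\eta\gamma_2)=(\gamma_1\eta)\gamma_2$, so the hypotheses of Theorem~\ref{actionunitality} are met.

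Next I would identify the resulting bimodule structure on $C_c^\infty(M)=C_c^\infty(G)$. Comparing the left-module formula \eqref{leftmoddef} (with $\tau=t$) against the first integral of \eqref{convdef}, and the right-module formula \eqref{rightmoddef} (with $\sigma=s$) against the second integral of \eqref{convdef}, one sees that both module actions are just the convolution product on $C_c^\infty(G)$. Under this identification: a right $C_c^\infty(G')$-submodule of $C_c^\infty(M)$ becomes a right ideal of $C_c^\infty(G)$; a right $C_c^\infty(G')$-linear map becomes a right $C_c^\infty(G)$-linear map; and $C_c^\infty(M)^K=C_c^\infty(G)^K$ with $K=\bigcup_{\varphi\in\mathscr{P}}\tau(\supp\varphi)=\bigcup_{\varphi\in\mathscr{P}}t(\supp\varphi)$, exactly the set named in the statement (compact by Proposition~\ref{pointset}). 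The open set $W\supseteq K$ and the functions $f_i\in C_c^\infty(W)$ transfer verbatim, and by Proposition~\ref{hdrffnbhd} one may take $W$ Hausdorff without loss of generality.

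The only thing meriting a second look — and I do not expect it to be a genuine obstacle — is that the internal machinery of the proof of Theorem~\ref{actionunitality} specializes as expected: the fundamental vector fields $X_i^M$ for the left action, and the commutation of the $\R$-actions they generate with the right $G'$-action. For $M=G$ and a compactly-supported section $X$ of $AG$, the flow formula $e^{rX^M}m=(e^{rX}\tau(m))\cdot m$ reads $e^{rX^G}\eta=(e^{rX}t(\eta))\,\eta$, which is precisely the flow of $X$ viewed as a (complete, by Lemma~\ref{compactimpliescomplete}) right-invariant vector field on $G$; and its commutation with right translation $R_\gamma$ is once more the associativity identity, using $t(\eta\gamma)=t(\eta)$. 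Hence the vector fields appearing inside the proof of Theorem~\ref{actionunitality} are exactly the complete right-invariant vector fields one would invoke to prove the corollary directly. With all conventions matched, the corollary follows by reading off the conclusion of Theorem~\ref{actionunitality}, the conditions $\mathscr{P}\subseteq A_0\subseteq C_c^\infty(G)^K$ and the support-non-increase of the $\Psi_i$ being inherited directly.
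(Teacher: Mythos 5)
Your proposal is correct and is exactly the paper's route: the paper states this corollary as the immediate specialization of Theorem~\ref{actionunitality} to $M=G$ with $G$ acting on itself by left and right translation, under which both module structures become convolution and right $C_c^\infty(G')$-submodules become right ideals. Your additional check that the fundamental vector fields specialize to the complete right-invariant vector fields is a sensible sanity check but not logically required, since the conclusion of Theorem~\ref{actionunitality} transfers verbatim.
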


The desired result (Theorem~\ref{mainthm1} from the introduction) now follows.

\begin{cor}
For any Lie groupoid $G$ with given Haar system, the smooth convolution algebra $C_c^\infty(G)$ is H-unital.
\end{cor}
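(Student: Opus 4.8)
The plan is to deduce H-unitality directly from Corollary~\ref{Gfactor} by verifying the hypotheses of Wodzicki's sufficient criterion, Proposition~\ref{suffcond}. So let $A = C_c^\infty(G)$, and let $\mathscr{P} \subseteq A$ be an arbitrary finite subset. First I would invoke Corollary~\ref{Gfactor} with this $\mathscr{P}$: choosing $K = \bigcup_{\varphi \in \mathscr{P}} t(\supp\varphi)$ (compact by Proposition~\ref{pointset}) and any Hausdorff open neighbourhood $W$ of $K$ (available by Lemma~\ref{hdrffnbhd}), this yields functions $f_1,\dots,f_N \in C_c^\infty(W)$, a right ideal $A_0 \subseteq C_c^\infty(G)$ with $\mathscr{P} \subseteq A_0$, and right $C_c^\infty(G)$-linear maps $\Psi_1,\dots,\Psi_N : A_0 \to C_c^\infty(G)$ satisfying $\varphi = \sum_{i=1}^N f_i * \Psi_i(\varphi)$ for all $\varphi \in A_0$.

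Next I would assemble the map $\phi : A_0 \to A \otimes A$ required by Proposition~\ref{suffcond} by setting
\[ \phi(\varphi) = \sum_{i=1}^N f_i \otimes \Psi_i(\varphi). \]
This is $\C$-linear because each $\Psi_i$ is. It is a map of right $A$-modules: for $g \in C_c^\infty(G)$, right $C_c^\infty(G)$-linearity of $\Psi_i$ gives $\Psi_i(\varphi * g) = \Psi_i(\varphi) * g$, so $\phi(\varphi * g) = \sum_i f_i \otimes (\Psi_i(\varphi) * g) = \phi(\varphi) \cdot g$, using the right module structure $(a \otimes b) \cdot c = a \otimes (bc)$ on $A \otimes A$. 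Finally, composing $\phi$ with the multiplication map $A \otimes A \to A$ yields $\sum_{i=1}^N f_i * \Psi_i(\varphi) = \varphi$, which is exactly the commutativity of the triangle in Proposition~\ref{suffcond} (the left vertical leg being the inclusion $A_0 \hookrightarrow A$). Since $\mathscr{P}$ was arbitrary, Proposition~\ref{suffcond} applies and $C_c^\infty(G)$ is H-unital.

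I do not anticipate a genuine obstacle here: the entire content has been front-loaded into Corollary~\ref{Gfactor}, whose proof in turn rests on Theorem~\ref{actionunitality}. The only things to be careful about are bookkeeping matters — confirming that the right-$C_c^\infty(G)$-module structure used in Corollary~\ref{Gfactor} is precisely the one appearing in Proposition~\ref{suffcond}, and that the finitely many $f_i$, $\Psi_i$ produced for a given $\mathscr{P}$ genuinely depend only on $\mathscr{P}$ (not on an individual $\varphi$), which is already built into the statement of Corollary~\ref{Gfactor}. This final corollary is therefore essentially a one-line translation once Corollary~\ref{Gfactor} is in hand.

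\begin{proof}
Let $\mathscr{P} \subseteq C_c^\infty(G)$ be a finite subset. Apply Corollary~\ref{Gfactor} to obtain $f_1,\ldots,f_N \in C_c^\infty(G)$, a right ideal $A_0 \subseteq C_c^\infty(G)$ with $\mathscr{P} \subseteq A_0$, and right $C_c^\infty(G)$-linear maps $\Psi_1,\ldots,\Psi_N : A_0 \to C_c^\infty(G)$ such that $\varphi = \sum_{i=1}^N f_i * \Psi_i(\varphi)$ for all $\varphi \in A_0$. Define $\phi : A_0 \to C_c^\infty(G) \otimes C_c^\infty(G)$ by
\[ \phi(\varphi) = \sum_{i=1}^N f_i \otimes \Psi_i(\varphi). \]
Then $\phi$ is $\C$-linear, and for $g \in C_c^\infty(G)$ we have $\phi(\varphi * g) = \sum_{i=1}^N f_i \otimes \Psi_i(\varphi * g) = \sum_{i=1}^N f_i \otimes (\Psi_i(\varphi) * g) = \phi(\varphi) \cdot g$, so $\phi$ is a map of right $C_c^\infty(G)$-modules. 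Composing with multiplication gives $\sum_{i=1}^N f_i * \Psi_i(\varphi) = \varphi$, so the diagram in Proposition~\ref{suffcond} commutes. Since $\mathscr{P}$ was arbitrary, $C_c^\infty(G)$ is H-unital by Proposition~\ref{suffcond}.
\end{proof}
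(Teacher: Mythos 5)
Your proposal is correct and matches the paper's own proof exactly: the paper likewise applies Proposition~\ref{suffcond} with $\phi(\varphi) = \sum_{i=1}^N f_i \otimes \Psi_i(\varphi)$ built from the data supplied by Corollary~\ref{Gfactor}. You have merely written out the routine verifications (right $A$-linearity and commutativity of the triangle) that the paper leaves implicit.
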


\begin{proof}
This follows from Corollary~\ref{Gfactor} and Proposition~\ref{suffcond}, taking $\phi$ to be  the map 
\[ \varphi \mapsto f_1 \otimes \Psi_1(\varphi) + \ldots + f_N \otimes \Psi_N(\varphi). \]
\end{proof}

\section{Factorization of flat functions relative to a submersion}

In this section, we extend the following known factorization result for functions that are ``flat'' on a given closed set so as to allow factorization to be performed relative to a submersion.

\begin{thm}
Suppose $M$ is a Hausdorff smooth manifold and $Z \subseteq M$ is closed. Let $I_Z^\infty$ denote the ideal in $C^\infty_c(M)$ consisting of functions that vanish to infinite order (i.e. vanish with all derivatives)  on $Z$. Then, given $\varphi_1,\ldots,\varphi_N \in I_Z^\infty$, there exists $\rho, \psi_1,\ldots,\psi_N \in I_Z^\infty$ such that $\varphi_i=\rho \psi_i$ for $i=1,\ldots,N$. 
\end{thm}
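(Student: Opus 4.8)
The statement is a simultaneous (common-divisor) version of the classical Borel-type fact that a single function flat on $Z$ factors as a product of two flat functions. The plan is to first reduce to the single-function case by an orthogonality trick, then invoke the known single-function result. Concretely, set $\psi = \sum_{i=1}^N |\varphi_i|^2 = \sum_i \varphi_i \overline{\varphi_i}$. Since each $\varphi_i \in I_Z^\infty$ and $I_Z^\infty$ is closed under complex conjugation and under products with arbitrary smooth functions, $\psi \in I_Z^\infty$; moreover $\psi \geq 0$, $\psi$ is compactly supported, and $\psi$ vanishes (to infinite order) exactly on a closed set containing $Z$. The key point is that on the open set $\{\psi > 0\}$ each quotient $\varphi_i / \psi$ is smooth and bounded, and one wants these quotients to extend smoothly by zero across $Z$ — which is not automatic, so a little more care is needed.

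The cleanest route is therefore: apply the known single-function theorem (the displayed theorem with $N=1$, or rather its proof, which produces a flat square root up to flat error) to $\psi$ to obtain a factorization $\psi = \rho^2 \cdot \eta$ or, more usefully, to obtain $\rho \in I_Z^\infty$ with $\rho \geq 0$ such that $\psi/\rho$ extends to a function in $I_Z^\infty$ — i.e. $\rho$ is a ``flat divisor'' of $\psi$ that divides slowly enough. Here I would use the standard Dixmier–Malliavin-style construction on $\mathbb{R}$ (Lemma~\ref{DMlemma}) composed with a suitable proper function measuring distance to $Z$: choose a smooth $h \colon M \to [0,\infty)$ with $h^{-1}(0) = Z$ (such $h$ exists by Whitney's theorem since $M$ is Hausdorff and second-countable), and arrange $\rho = g \circ h$ where $g \in C^\infty([0,\infty))$ is flat at $0$ and tends to $0$ slowly enough that $\varphi_i/\rho$ remains smooth and flat on $Z$ for every $i$ simultaneously — this is possible because there are only finitely many $\varphi_i$, so one can take $g$ to dominate all of them at once. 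Then set $\psi_i \coloneqq \varphi_i/\rho$, extended by zero on $Z$; one checks $\psi_i \in I_Z^\infty$ and $\varphi_i = \rho\,\psi_i$, with $\rho \in I_Z^\infty$ as well.

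The main obstacle is the step of showing $\varphi_i/\rho$ is genuinely smooth up to and flat on $Z$ — i.e. controlling all derivatives of the quotient near $Z$. This is where one needs quantitative flatness estimates: writing things in a chart, $\varphi_i$ and all its derivatives are $O(h^k)$ for every $k$ near $Z$ (by flatness and Taylor estimates relative to $h$), while $\rho = g\circ h$ with $g$ flat at $0$; the quotient rule then expresses $\partial^\alpha(\varphi_i/\rho)$ as a sum of terms involving $\partial^\beta \varphi_i$ divided by powers of $\rho$ times derivatives of $\rho$, and one must choose $g$ decaying slowly enough that no negative power of $g\circ h$ overwhelms the flatness of $\varphi_i$. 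The standard device — as in the proof of Lemma~\ref{DMlemma} — is to pick the coefficients defining $g$ recursively/diagonally so that the relevant finitely many ratios all stay bounded; the finiteness of the family $\{\varphi_1,\dots,\varphi_N\}$ is exactly what makes this diagonal choice work. The remaining verifications (compact support of $\rho$ and $\psi_i$, and that all belong to $I_Z^\infty$) are routine once the quotient estimate is in hand.
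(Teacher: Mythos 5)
Your overall strategy -- factor out a common divisor of the form $\rho=g\circ h$ where $h$ is a nonnegative defining function for $Z$ and $g$ vanishes at $0$ slowly enough to dominate the finitely many $\varphi_i$ -- is the right one, and it is essentially how the cited result is proved (and how the present paper proves its parametrized generalization, Theorem~\ref{localfac}). However, there is a genuine gap at the step where you take $h$ to be \emph{any} smooth function with $h^{-1}(0)=Z$ supplied by Whitney's theorem. The two demands you place on $g$ -- that $g$ be flat at $0$ (so that $\rho=g\circ h$ lies in $I_Z^\infty$) and that $g(t)$ dominate $\sup\{|\partial^\alpha\varphi_i(x)|^{1/m}:h(x)\le t\}$ -- are compatible only if the latter suprema decay faster than every power of $t$. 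Flatness of $\varphi_i$ gives $|\partial^\alpha\varphi_i(x)|=O(\delta(x)^k)$ for every $k$, where $\delta$ is the distance to $Z$, so what is actually needed is that $h(x)\le t$ force $\delta(x)\lesssim t^{1/s}$ for some fixed $s$, i.e.\ a \L ojasiewicz-type lower bound $h\gtrsim\delta^{s}$. An arbitrary smooth defining function need not satisfy this: already for $Z=\{0\}\subseteq\R$ and $h(x)=e^{-1/x^2}$, the condition $h(x)\le t$ only yields $|x|\le(\log(1/t))^{-1/2}$, so for $\varphi(x)=e^{-1/|x|}$ (times a cutoff) the relevant supremum decays slower than every power of $t$ and \emph{no} $g$ flat at $0$ can dominate it. The diagonal/recursive choice of $g$ does not rescue this; the obstruction is the quality of $h$, not the number of functions $\varphi_i$.

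The missing ingredient is the \emph{regularized distance function} (Theorem~\ref{regdist}, due to Stein): a smooth $\Delta$ on $M\setminus Z$ with $c_1\delta\le\Delta\le c_2\delta$ and $|\partial^\alpha\Delta|\le B_\alpha\delta^{1-|\alpha|}$. With $h$ replaced by $\Delta$, the suprema above are $o(t^p)$ for all $p$, Lemma~\ref{slowdecay} produces the slowly flat $g$, Lemma~\ref{decayatZ} shows $g\circ\Delta$ extends to an element of $I_Z^\infty$ (here the derivative bounds on $\Delta$ are also needed, since $\Delta$ is not smooth across $Z$), and Lemma~\ref{elemquot} handles the quotients $\varphi_i/\rho$. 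Your mention of Lemma~\ref{DMlemma} is a red herring: the Dirac-series representation plays no role here; the relevant one-variable fact is the slow-decay Lemma~\ref{slowdecay}. With the regularized distance substituted for the Whitney defining function, your outline becomes the standard proof.
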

\begin{proof}
This follows from Theorem~3.2 of \cite{Voigt}. See also the proof of Theorem~6.2 in \cite{Wodzicki} (``property [F]'' is defined on pp. 612).
\end{proof}

Our extended factorization result is Theorem~\ref{subfac} below, essentially a ``with parameters'' version of the preceding result. It seems likely that Theorem~\ref{subfac} is known to experts but, as a reference could not be located, we provide a proof.

 Let us first record the following elementary extension principle.

\begin{lemma}\label{elemext}
Let $Z \subseteq \R^d$ be a closed set and let $f \in C^\infty(W)$,  $W \coloneqq \R^d \setminus Z$. If $f$ and all its partial derivatives vanish at the boundary of $W$, then $f$ extends by zero to a smooth function on $\R^d$ vanishing to infinite order  on $Z$. 
\end{lemma}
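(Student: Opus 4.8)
\textbf{Proof plan for Lemma~\ref{elemext}.}

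The plan is to define $g : \R^d \to \R$ by $g = f$ on $W$ and $g = 0$ on $Z$, and then verify that $g$ is smooth with all partial derivatives vanishing on $Z$. First I would observe that smoothness of $g$ is automatic at every point of the open set $W$, and also at every interior point of $Z$ (where $g$ vanishes identically on a neighbourhood), so the only points in question are boundary points $z_0 \in \partial W = \partial Z$. Near such a point the issue is purely local, so I would fix $z_0$ and work in a bounded neighbourhood.

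The key step is an induction on the order $|\alpha|$ of the multi-index, proving simultaneously that $g$ is $C^{|\alpha|}$ and that $\partial^\alpha g$ extends $\partial^\alpha f$ by zero and vanishes on $Z$. The inductive engine is the mean value theorem / fundamental theorem of calculus applied along segments: to show a partial derivative $\partial_i(\partial^\beta g)$ exists at a boundary point $z_0$ and equals $0$, one writes the difference quotient $\bigl(\partial^\beta g(z_0 + h e_i) - \partial^\beta g(z_0)\bigr)/h$. If $z_0 + h e_i \in Z$ the numerator is $0$; if $z_0 + h e_i \in W$, one uses that $\partial^\beta g$ vanishes at $z_0$ and, by the hypothesis that all derivatives of $f$ vanish at $\partial W$ together with continuity, that $\partial^{\beta + e_i} f \to 0$ at the boundary, to bound the quotient by $\sup$ of $|\partial^{\beta+e_i} f|$ over a shrinking region near $z_0$; this sup tends to $0$ as $h \to 0$. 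One must be slightly careful that the segment from $z_0$ to $z_0 + h e_i$ may cross $\partial W$ several times, but splitting the segment at its last crossing (or simply estimating $\partial^\beta g(z_0 + he_i)$ directly, since $\partial^\beta g(z_0)=0$ and $\partial^\beta g$ is continuous and vanishes on the closed set $Z$) handles this. Continuity of $\partial^\alpha g$ on all of $\R^d$ then follows because it is continuous on $W$, identically $0$ on the interior of $Z$, and tends to $0$ approaching $\partial W$ from within $W$ by the vanishing hypothesis on $f$.

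The main obstacle — really the only subtlety — is the geometry of an arbitrary closed set $Z$: unlike the case of a submanifold, $\partial W$ can be wild, and a point of $W$ may be arbitrarily close to $Z$ while the straight segment joining them stays in $W$ only partially. The remedy is to avoid segment arguments that require staying in $W$ and instead phrase everything in terms of: (i) $\partial^\alpha g \equiv 0$ on $Z$ by construction/induction, (ii) $\partial^\alpha g$ is continuous, and (iii) the quantitative statement that $\sup\{|\partial^\gamma f(x)| : x \in W,\ |x - z_0| \le \delta\} \to 0$ as $\delta \to 0$ for every $\gamma$, which follows from the hypothesis and continuity of $f$ and its derivatives on $\overline W$ extended by zero on $\partial W$. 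Once one has the uniform smallness (iii), Taylor's theorem with remainder on segments lying in $W$ (which always exist locally, joining a point of $W$ to a nearby point of $W$) gives $C^{|\alpha|}$ regularity at the boundary by a routine estimate, and the induction closes. Finally, since this holds for all $|\alpha|$, $g \in C^\infty(\R^d)$ and vanishes to infinite order on $Z$, as required.
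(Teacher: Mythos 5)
Your proposal is correct and follows essentially the same route as the paper: extend by zero, reduce via the standard $C^1$ criterion and induction to showing that first-order partials exist and vanish on $Z$, and establish this by a one-dimensional mean value theorem / difference-quotient argument along coordinate segments, with the crossing-of-$\partial W$ issue handled exactly as you describe. No gaps.
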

\begin{proof}
Let $g$ be the extension by zero of $f$. 
Using induction and the standard result that a function on $\R^d$ is $C^1$ if and only each of its first order partials exists and is continuous, one only needs to check that the first order partials of $g$ exists and vanish on $Z$. One may therefore reduce to the case $d=1$ where the desired conclusion may be deduced from the mean value theorem. 
\end{proof}

 The preceding lemma leads to directly to a condition under which one may form the quotient of two smooth functions vanishing to infinite order on a given closed set.

\begin{lemma}\label{elemquot}
Let $Z \subseteq \R^d$ be a closed set. Suppose  $f , g \in C^\infty(\R^d)$ vanish to infinite order on $Z$  and $g >0$  on $W\coloneqq \R^n \setminus Z$. If, for every $\alpha \in \N^d$ and $m \in \N$, the function $\frac{\partial^\alpha f}{g^m} \in C^\infty(W)$  vanishes at the boundary of $W$, then the extension by zero of $\frac{f}{g}$ is a smooth function on $\R^d$ vanishing to infinite order on $Z$. 
\end{lemma}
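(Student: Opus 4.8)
\textbf{Proof plan for Lemma~\ref{elemquot}.}
The plan is to reduce the statement to the extension principle of Lemma~\ref{elemext} applied to the function $h \coloneqq f/g \in C^\infty(W)$. By Lemma~\ref{elemext}, it suffices to show that $h$ and each of its partial derivatives $\partial^\alpha h$ extends continuously by zero across $Z$, i.e.\ that $\partial^\alpha h(x) \to 0$ as $x \to z$ for every boundary point $z$ of $W$ (equivalently, as $\mathrm{dist}(x,Z) \to 0$ with $x \in W$). So the whole proof comes down to understanding the form of the derivatives $\partial^\alpha(f/g)$ on $W$.

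First I would establish, by induction on $|\alpha|$, a formula expressing $\partial^\alpha(f/g)$ on $W$ as a finite sum of terms of the shape
\[
\frac{(\partial^{\beta_0} f)\,(\partial^{\beta_1} g) \cdots (\partial^{\beta_j} g)}{g^{m}},
\]
where $j \geq 0$, $m = j+1$, and $\beta_0 + \beta_1 + \cdots + \beta_j \leq \alpha$ (in particular $|\beta_0| \le |\alpha|$). This is the standard quotient-rule induction: differentiating $\partial^\alpha(f/g) = \sum (\cdots)$ with respect to one more variable, each term either gets an extra derivative landing on the numerator (keeping the denominator power fixed, or—via $\partial_i(1/g^m) = -m(\partial_i g)/g^{m+1}$—raising the denominator power by one and appending one more factor $\partial_i g$ to the numerator). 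The bookkeeping on multi-indices and the count "numerator has one more $g$-derivative factor than the denominator power minus one" is routine and I would not grind through it in detail.

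Next I would rewrite each such term as $(\partial^{\beta_1} g)\cdots(\partial^{\beta_j} g) \cdot \dfrac{\partial^{\beta_0} f}{g^{m}}$. The factors $\partial^{\beta_i} g$ are smooth on all of $\R^d$, hence bounded on any bounded neighbourhood of a given boundary point $z$; and by hypothesis $\partial^{\beta_0} f / g^{m} \in C^\infty(W)$ vanishes at the boundary of $W$. Therefore each term, being a product of a locally bounded function and one tending to $0$ at $z$, tends to $0$ at $z$; summing finitely many such terms, $\partial^\alpha(f/g) \to 0$ at every boundary point of $W$. Since this holds for all $\alpha$, Lemma~\ref{elemext} applies to $h = f/g$ and gives that its extension by zero is smooth on $\R^d$ and vanishes to infinite order on $Z$, as claimed.

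The only mildly delicate point — and the one I would state carefully rather than wave at — is the local boundedness of the $\partial^{\beta_i} g$ near a boundary point: this is fine because $g \in C^\infty(\R^d)$ (defined and smooth across $Z$, not merely on $W$), so all its derivatives are continuous on $\R^d$ and hence bounded on compact neighbourhoods. Everything else is the combinatorial induction for the iterated quotient rule, which is entirely mechanical; there is no real analytic obstacle once the hypothesis "$\partial^\alpha f/g^m$ vanishes at $\partial W$ for all $\alpha, m$" is in hand, since that hypothesis is precisely tailored to feed Lemma~\ref{elemext}.
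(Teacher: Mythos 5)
Your proposal is correct and follows essentially the same route as the paper: the paper packages your explicit quotient-rule expansion as the observation that the $C^\infty(\R^d)$-linear span of the functions $\partial^\alpha f/g^m$ is closed under partial differentiation (via the identity $\partial\bigl(h\,\partial^\alpha f/g^m\bigr)=(\partial h)\,\partial^\alpha f/g^m+h\,\partial(\partial^\alpha f)/g^m-m(\partial g)h\,\partial^\alpha f/g^{m+1}$), and then likewise concludes via Lemma~\ref{elemext}. Your explicit remark on the local boundedness of the smooth coefficient factors is the same point the paper uses implicitly.
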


\begin{proof}
Let $\mathscr{F} \subseteq C^\infty(W)$ denote the  $C^\infty(\R^d)$-linear span of the functions $\frac{\partial^\alpha f}{g^m}$ for $\alpha \in \N^d$, $m \in \N$. By assumption, the functions in $\mathscr{F}$ all vanish at the boundary of $W$. Observe that $\mathscr{F}$ is closed under taking partial derivatives. Indeed, if $\alpha \in \N^d$ is a multi-index, $m \in \N$, $h \in C^\infty(\R^d)$ and $\partial$ is one of the first order partials, then 
\[ \partial ( h \frac{\partial^\alpha f}{g^m} )= (\partial h) \frac{\partial^\alpha f}{g^m} + h \frac{(\partial \circ \partial^\alpha) f}{g^m} - m(\partial g)h \frac{\partial^\alpha f}{g^{m+1}}.\]
Thus, thinking of $\frac{f}{g}$ as a smooth function on $W$,  we have by induction that all of its higher order partial derivatives vanish at the boundary of $W$. Thus,  $\frac{f}{g}$ extends to a smooth function on $\R^d$ that vanishes to infinite order on $Z$ by  Lemma~\ref{elemext}.
 \end{proof}

We use  the existence of a regularized distance function for an arbitrary  closed subset of Euclidean space. Such distance functions appear in the proof of the Whitney extension theorem given in \cite{Stein[1970]} and can be explicitly constructed using cubical meshes. Note that Wodzicki \cite{Wodzicki} also uses these functions by way of an  appeal to \cite{Voigt}.

\begin{thm}\label{regdist}
Let $Z \subseteq \R^d$ be closed,  $W \coloneqq \R^d \setminus Z$ and let  $\delta(x)$ denote the distance from $x$ to $Z$. Then, there is a smooth function $\Delta : W \to (0,\infty)$ such that: 
\begin{enumerate}[(i)]
\item there exist constants $c_2>c_1>0$ such that $c_1 \delta(x)  \leq \Delta(x)  \leq c_2 \delta(x)$, $x \in W$, 
\item for each $\alpha \in \N^d$, there is  a constant $B_\alpha \geq 0$ such that $| \partial^\alpha \Delta(x) | \leq B_\alpha \delta(x)^{1-|\alpha|}$, $x \in W$. 
\end{enumerate}
The constants $c_1,c_2, B_\alpha$ are independent of $Z$. 
\end{thm}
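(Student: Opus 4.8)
The statement to prove is Theorem~\ref{regdist}, the existence of a regularized distance function. This is the classical construction of Stein (\cite{Stein[1970]}, Chapter~VI), so the plan is to reproduce that construction and verify the two stated estimates, taking care that the constants come out independent of the closed set $Z$.

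The plan is as follows. First I would dispose of the trivial case $Z = \emptyset$ (take $\Delta \equiv 1$, or any harmless convention) and assume $Z \neq \emptyset$, so that $\delta(x) = \operatorname{dist}(x, Z)$ is finite and positive on $W$. The main idea is to build $\Delta$ by a partition-of-unity smoothing of $\delta$ adapted to a Whitney cube decomposition of the open set $W$. Concretely, decompose $W$ into a family $\{Q_k\}$ of dyadic cubes with pairwise disjoint interiors such that $\operatorname{diam}(Q_k)$ is comparable to $\operatorname{dist}(Q_k, Z)$ — say $\operatorname{diam}(Q_k) \le \operatorname{dist}(Q_k, Z) \le 4\operatorname{diam}(Q_k)$, which is the standard Whitney bound with absolute constants depending only on $d$. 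Fix a single smooth bump $\varphi_0$ supported in a slightly dilated unit cube, equal to $1$ on the unit cube, with $0 \le \varphi_0 \le 1$; for each $k$, let $\varphi_k$ be the corresponding rescaled-and-translated bump adapted to the slightly expanded cube $Q_k^*$ (expansion factor $1 + \varepsilon$ with $\varepsilon$ a fixed dimensional constant, e.g. $\tfrac{1}{4}$). The expanded cubes have bounded overlap (each point lies in at most $N_d$ of them, $N_d$ dimensional), so $\Phi \coloneqq \sum_k \varphi_k$ is a smooth function on $W$ with $1 \le \Phi \le N_d$, and the normalized functions $\psi_k \coloneqq \varphi_k/\Phi$ form a smooth partition of unity on $W$ subordinate to $\{Q_k^*\}$. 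Then define
\[ \Delta(x) \coloneqq \sum_k \operatorname{diam}(Q_k)\,\psi_k(x), \]
which converges locally finitely and is smooth on $W$.

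The estimates then follow by routine bookkeeping. For (i): if $x \in Q_k^*$, then since $Q_k$ is Whitney-comparable to its distance from $Z$ and $Q_k^*$ is only a bounded dilation of $Q_k$, one has $\delta(x)$ comparable to $\operatorname{diam}(Q_k)$ with dimensional constants; summing against the partition of unity $\psi_k$ (which sums to $1$) gives $c_1 \delta(x) \le \Delta(x) \le c_2 \delta(x)$ with $c_1, c_2$ depending only on $d$. For (ii): differentiating, $\partial^\alpha \Delta(x) = \sum_k \operatorname{diam}(Q_k)\,\partial^\alpha \psi_k(x)$; the bound $|\partial^\alpha \varphi_k| \le C_\alpha \operatorname{diam}(Q_k)^{-|\alpha|}$ holds by scaling from the fixed bump $\varphi_0$, and the quotient rule combined with the lower bound $\Phi \ge 1$ and the bounded-overlap property propagates this to $|\partial^\alpha \psi_k| \le C_\alpha' \operatorname{diam}(Q_k)^{-|\alpha|}$ (the constant now depending on $\alpha$, $d$, and the fixed choice of $\varphi_0$, but not on $Z$). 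Since only $\le N_d$ cubes $Q_k^*$ contain a given $x$, and for those $\operatorname{diam}(Q_k) \sim \delta(x)$, we get $|\partial^\alpha \Delta(x)| \le N_d\, C_\alpha'\, \operatorname{diam}(Q_k)^{1-|\alpha|} \le B_\alpha \,\delta(x)^{1-|\alpha|}$. All constants trace back to $d$, the fixed template bump $\varphi_0$, and the Whitney-decomposition constants, none of which involve $Z$, establishing the last sentence of the theorem.

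The main obstacle — really the only nontrivial point — is verifying the bounded-overlap property of the expanded cubes $\{Q_k^*\}$ and the Whitney comparability $\operatorname{diam}(Q_k) \sim \operatorname{dist}(Q_k, Z)$ with constants independent of $Z$; this is the content of the Whitney decomposition lemma (\cite{Stein[1970]}, Chapter~VI, Theorem~1 and its corollaries), and I would simply invoke it rather than reprove it. Everything downstream — the partition of unity, the scaling estimates on the bumps, the quotient rule — is mechanical. I would present the argument at the level of detail above, citing \cite{Stein[1970]} for the cube decomposition and for the model construction, and noting that the uniformity in $Z$ is automatic since $Z$ enters only through the geometry of its complement via the decomposition, whose constants are dimensional.
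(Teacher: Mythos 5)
Your construction is correct and is precisely the argument behind the result the paper invokes: the paper proves Theorem~\ref{regdist} simply by citing Theorem~2 on p.~171 of \cite{Stein[1970]}, and what you have written out is Stein's own proof (Whitney cube decomposition, a scaled partition of unity on the expanded cubes, and $\Delta = \sum_k \operatorname{diam}(Q_k)\psi_k$), including the correct observation that all constants are dimensional and hence independent of $Z$. No gaps; you have just expanded the citation into the standard proof.
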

\begin{proof}
This is Theorem~2 on pp.~171 of \cite{Stein[1970]}.
\end{proof}

Next, we apply Theorem~\ref{regdist} to construct smooth defining functions $\rho \in C^\infty(\R^d)$ for arbitrary closed sets $Z \subseteq \R^d$ with rate of vanishing  governed  by an arbitrary smooth function $g : (0,\infty) \to (0,\infty)$ vanishing together with all  derivatives at $0$.

\begin{lemma}\label{decayatZ}
Let $Z \subseteq \R^d$ be closed and put $W \coloneqq \R^d \setminus Z$. Let $\Delta:W \to (0,\infty)$ be a regularized distance function for $Z$, as in Theorem~\ref{regdist}. Then, given any $g \in C^\infty(0,\infty)$ vanishing together with all derivatives at $0$,  the extension by zero of  $g\circ \Delta$ is a smooth function $\rho$  on $\R^d$ vanishing to infinite order on $Z$. 
\end{lemma}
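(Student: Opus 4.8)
The goal is to show that $\rho$, the extension by zero of $g \circ \Delta$, is smooth on $\R^d$ and vanishes to infinite order on $Z$. By Lemma~\ref{elemext}, it suffices to show that $g \circ \Delta \in C^\infty(W)$ has the property that it and all of its partial derivatives vanish at the boundary of $W$ (i.e. tend to $0$ as $x \to Z$ within $W$). Since $g \circ \Delta$ is manifestly smooth on $W$ (being a composite of smooth functions), the whole content is the boundary-vanishing estimate.

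First I would record the chain-rule structure. By the Fa\`{a} di Bruno formula, for any multi-index $\alpha \in \N^d$ with $|\alpha| \geq 1$, the derivative $\partial^\alpha (g \circ \Delta)$ is a finite sum of terms of the form
\[ (g^{(j)} \circ \Delta) \cdot \prod_{i=1}^{j} \partial^{\beta_i} \Delta, \]
where $1 \leq j \leq |\alpha|$ and $\beta_1 + \ldots + \beta_j = \alpha$ with each $|\beta_i| \geq 1$. Using Theorem~\ref{regdist}~(ii), each factor is bounded by $|\partial^{\beta_i} \Delta(x)| \leq B_{\beta_i} \delta(x)^{1 - |\beta_i|}$, so the product is bounded by a constant times $\delta(x)^{j - |\alpha|}$, and since $j - |\alpha| \geq -|\alpha| + 1 > -|\alpha|$... more precisely $j - |\alpha|$ can be as negative as $1 - |\alpha|$, so the term is bounded by $C_\alpha |g^{(j)}(\Delta(x))| \cdot \delta(x)^{1-|\alpha|}$ for some constant $C_\alpha$.

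The key point is then: each $g^{(j)}$ vanishes to infinite order at $0$ (this follows from the hypothesis that $g$ vanishes together with all derivatives at $0$, since the derivatives of $g^{(j)}$ are derivatives of $g$), so $|g^{(j)}(t)| \leq M_{j,n} t^n$ for every $n$ and some constants $M_{j,n}$, valid for $t$ in a bounded interval near $0$. Since $\delta(x) \to 0$ and $\Delta(x) \to 0$ as $x \to Z$ (by Theorem~\ref{regdist}~(i), $\Delta(x) \leq c_2 \delta(x)$), we may choose $n$ large enough (namely $n > |\alpha| - 1$) so that $|g^{(j)}(\Delta(x))| \cdot \delta(x)^{1-|\alpha|} \leq M_{j,n} (c_2 \delta(x))^n \delta(x)^{1-|\alpha|} = M_{j,n} c_2^n \delta(x)^{n + 1 - |\alpha|} \to 0$ as $x \to Z$. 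Summing over the finitely many Fa\`{a} di Bruno terms shows $\partial^\alpha(g \circ \Delta)(x) \to 0$ as $x \to Z$, for every $\alpha$ (the case $\alpha = 0$ being immediate since $g(\Delta(x)) \to g(0) = 0$). Lemma~\ref{elemext} then gives that the extension by zero $\rho$ is smooth on $\R^d$ and vanishes to infinite order on $Z$.

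I do not expect a serious obstacle here; the only mild care needed is to confine the argument to a neighbourhood of $Z$ (where $\delta$, hence $\Delta$, is small) so that the infinite-order vanishing estimates on $g^{(j)}$ apply with uniform constants — away from $Z$, $\delta$ is bounded below and smoothness of $g \circ \Delta$ is automatic, so there is nothing to check there. A clean way to organize this is: fix $\alpha$, work on the set $\{x \in W : \delta(x) < 1\}$, and note $\Delta$ maps this into a bounded interval $(0, c_2)$ on which the Taylor estimates for $g^{(j)}$ hold.
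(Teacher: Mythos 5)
Your argument is correct and follows essentially the same route as the paper: expand $\partial^\alpha(g\circ\Delta)$ via the Fa\`{a} di Bruno formula, bound the products of derivatives of $\Delta$ by powers of $\delta$ using Theorem~\ref{regdist}, use that each $g^{(j)}$ vanishes to infinite order at $0$ to kill the negative powers of $\delta$, and conclude with Lemma~\ref{elemext}. Your extra care about restricting to a neighbourhood where $\delta<1$ and about why the $g^{(j)}$ inherit infinite-order vanishing is a welcome clarification but not a different method.
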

\begin{proof}
In view of Lemma~\ref{elemext}, we just need to show that all of the partial derivatives of $g\circ \Delta$ vanish at the boundary of $W$. For nonzero $\alpha \in \N^d$, we  have the following formula for the $\alpha$th partial derivative  of a composition:
\[ \partial^\alpha(g \circ \Delta) = \sum_{k=1}^{|\alpha|} ( g^{(k)} \circ \Delta) \sum_{\beta \in J(\alpha,k)} C(\alpha,k,\beta) \prod_{j=1}^k \partial^{\beta(j)} \Delta \]
where $C(\alpha,k,\beta) \in \N$ and  
\[ J(\alpha,k) \coloneqq\{ \beta = (\beta(1),\ldots, \beta(k)) \in (\N^d)^k : \beta(j) \neq 0,  j=1,\ldots,k \text{ and } {\textstyle\sum_{j=1}^k} \beta(j) = \alpha \}. \]
See the proof of Lemma~3.1 in \cite{Voigt} as well as \cite{Fraenkel}. Applying the estimates of Theorem~\ref{regdist} to the above expression for $\partial^\alpha(g\circ\Delta)$, it is straightforward to derive an estimate of the form
\[ |\partial^\alpha (g \circ \Delta)| \leq \sum_{k=1}^{|\alpha|} C(k)  (g^{(k)} \circ \Delta) \Delta^{k-|\alpha|}, \]
where $C(k) \geq 0$. Since $g(t)t^{-p} \to 0$ as $t\to 0^+$ for all $p \in \N$ and $\Delta$ vanishes at the boundary of $W$, the estimate above shows that $\partial^\alpha(g\ \circ \Delta)$ vanishes at the boundary of $W$ as needed.
\end{proof}

  Lemma~\ref{decayatZ}, together with the following result, allows one to construct smooth defining functions for a closed set $Z \subseteq \R^d$ which vanish to infinite order on $Z$ ``as slowly as desired''. 
\begin{lemma}\label{slowdecay}
Let $f_k$ be a  sequence of functions on $(0,\infty)$ such that  $\lim_{t \to 0^+} f_k(t) t^{-p} = 0$ for all $p \in \N$. Then, there exists a smooth, positive-valued function $g$ on $(0,\infty)$  that vanishes together with all its derivatives at $0$ such that  $\lim_{t \to 0^+} \frac{f_k(t)}{g(t)} =0$ for all $k$.
\end{lemma}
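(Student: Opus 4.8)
The plan is to produce $g$ in the explicit form $g(t)=t^{\nu(1/t)/2}=\exp\!\big(\tfrac12\nu(1/t)\log t\big)$, where $\nu:(0,\infty)\to(0,\infty)$ is a smooth, non-decreasing function, tuned to the sequence $(f_k)$, whose graph is forced through a prescribed sequence of points while keeping all derivatives bounded. The square root is a convenience: if we arrange that the auxiliary function $w:=g^2$ dominates $u_k(t):=|f_k(t)|+e^{-1/t}$ near $0$ for every $k$, then $|f_k(t)|/g(t)\le\sqrt{u_k(t)}\to0$ since $u_k(t)\to0$; and flatness of $g$ at $0$ follows from flatness of $w$. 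So it suffices to build a smooth, positive $w$ on $(0,\infty)$ that vanishes to infinite order at $0$ and satisfies $w(t)\ge u_k(t)$ for all sufficiently small $t$, for each $k$.

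First I would record the elementary facts about $u_k$: each $u_k$ is positive, $u_k(t)\to0$, and $u_k(t)t^{-p}\to0$ for all $p$ (from the hypothesis on $f_k$ together with $e^{-1/t}t^{-p}\to0$). Passing to the variable $s=1/t$ and setting $\psi_k(s):=-\log u_k(1/s)$, the last property translates into $\psi_k(s)/\log s\to\infty$ as $s\to\infty$. Dually, writing $w(t)=t^{\nu(1/t)}$, the requirement that $w$ vanish to infinite order at $0$ becomes $\nu(s)\to\infty$, and $w(t)\ge u_k(t)$ for small $t$ becomes $\nu(s)\le\psi_k(s)/\log s$ for large $s$ (using that $t<1$ reverses the inequality between exponents). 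Since both $\nu(s)$ and each $\psi_k(s)/\log s$ must tend to $\infty$, with $\nu$ below all of the latter, this is a routine diagonalization: I would choose an increasing sequence $b_1<b_2<\cdots\to\infty$ with $b_n\ge\max(n,b_{n-1}+1)$ and $\psi_j(s)\ge(n+1)\log s$ for all $s\ge b_n$ and $1\le j\le n$, then take $\nu$ to be any smooth, non-decreasing function on $(0,\infty)$ with $\nu(b_n)=n$ for all $n$, constructed by smoothly interpolating through these points. Because consecutive $b_n$ are at distance $\ge1$, this can be arranged with all derivatives of $\nu$ bounded by constants independent of $n$, and one then also has $\nu(s)\le s+1$ and $\nu(s)\le n+1$ on $[b_n,b_{n+1})$. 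These bounds give $\nu(1/t)\le\psi_k(1/t)/\log(1/t)$ for $t\le 1/b_k$, hence $w(t)\ge u_k(t)$ there, as wanted.

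The remaining point — and the one requiring genuine care — is verifying that $g(t)=t^{\nu(1/t)/2}$ is smooth on $(0,\infty)$ with all derivatives tending to $0$ as $t\to0^+$. Smoothness and positivity are immediate from $g=\exp(\tfrac12\nu(1/t)\log t)$. For the derivatives, a Faà di Bruno computation expresses $g^{(p)}(t)$ as $g(t)$ times a universal polynomial in the quantities $\frac{d^j}{dt^j}\big(\nu(1/t)\log t\big)$; expanding each of these by the chain rule and using the boundedness of the derivatives of $\nu$ together with the linear bound $\nu(s)\le s+1$, one gets $|g^{(p)}(t)|\le C_p\,g(t)\,t^{-N_p}$ for $t$ near $0$, with $C_p,N_p$ depending only on $p$. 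Since $g(t)=t^{\nu(1/t)/2}$ and $\nu(1/t)\to\infty$, one has $g(t)t^{-N}\to0$ for every $N$, so $g^{(p)}(t)\to0$. This establishes that $g$ vanishes together with all derivatives at $0$. The main obstacle is precisely this derivative bookkeeping: it is what forces the two structural requirements on $\nu$ (uniformly bounded derivatives and at most linear growth), which in turn dictate the constraints $b_n\ge n$ and $b_{n+1}-b_n\ge1$ imposed above.
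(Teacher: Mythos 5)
Your proof is correct. Note first that the paper does not actually prove this lemma in situ: it defers to Lemma~6.7 of \cite{Francis[DM]}, so you have supplied a genuine argument where the paper gives only a citation. Your construction is the standard device for this kind of statement --- take $g(t)=t^{\nu(1/t)/2}$ with $\nu$ a slowly and smoothly increasing exponent, diagonalize over the $f_k$ to pin down how slowly $\nu$ may grow, and then check flatness by Fa\`a di Bruno --- and all three stages check out: the translation of ``$w\ge u_k$ near $0$'' into ``$\nu(s)\le\psi_k(s)/\log s$ for $s\ge b_k$'' is right (the inequality reversal from $t<1$ is exactly where it is needed), the choice of nodes $b_n$ with $b_{n+1}-b_n\ge1$ and $b_n\ge n$ makes the interpolating $\nu$ have uniformly bounded derivatives and at most linear growth, and those two facts are precisely what make every $h^{(j)}$, $h(t)=\tfrac12\nu(1/t)\log t$, polynomially bounded in $1/t$, so that $|g^{(p)}(t)|\le C_p\,g(t)\,t^{-N_p}\to0$. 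The trick of adding $e^{-1/t}$ to force $u_k>0$ (so that $\psi_k$ is finite) costs nothing, since $e^{-1/t}$ also decays faster than every power. Two cosmetic points you may as well make explicit: define $\nu\equiv1$ on $(0,b_1]$ so that $g$ is genuinely defined and smooth on all of $(0,\infty)$, and state that ``vanishes together with all derivatives at $0$'' is being read as $\lim_{t\to0^+}g^{(p)}(t)=0$ for every $p$, which is what your estimates deliver (and which, by the mean value theorem, is equivalent to $g$ extending to a smooth function flat at $0$). Neither affects the correctness of the argument.
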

\begin{proof}
See Lemma~6.7 in \cite{Francis[DM]} for a complete proof and references to the literature.
\end{proof}

\begin{thm}\label{localfac}
Let $Z \subseteq \R^k$ be closed, $W \coloneqq \R^k \setminus Z$. Suppose $f \in C^\infty(\R^k \times \R^\ell)$ vanishes to infinite order on $Z \times \R^\ell$. Then, there exists $\rho \in C^\infty(\R^k)$ vanishing to infinite order on $Z$ and strictly positive on $W$ and $g \in C^\infty(\R^k \times \R^\ell)$ vanishing to infinite order on $Z \times \R^k$ such that $f(x,y) = \rho(x) g(x,y)$ for all $(x,y) \in \R^k\times \R^\ell$. 
\end{thm}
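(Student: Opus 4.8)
The plan is to reduce Theorem~\ref{localfac} to the unparametrized factorization result by constructing a single defining function $\rho$ on $\R^k$ that decays, near $Z$, slowly enough that the quotient $f(x,y)/\rho(x)$ extends smoothly and flatly across $Z\times\R^\ell$. The candidate $\rho$ will be built via Lemma~\ref{decayatZ}: take a regularized distance function $\Delta:W\to(0,\infty)$ for $Z$ (Theorem~\ref{regdist}) and set $\rho=(g_0\circ\Delta)_0$ for a suitable $g_0\in C^\infty(0,\infty)$ vanishing to infinite order at $0$; by Lemma~\ref{decayatZ} this $\rho$ is smooth on $\R^k$, flat on $Z$, and strictly positive on $W$. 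The point is to choose $g_0$ decaying slowly enough (using Lemma~\ref{slowdecay}) that $f/\rho$ has all the vanishing needed, then invoke Lemma~\ref{elemquot} on $\R^k\times\R^\ell$ with closed set $Z\times\R^\ell$.

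Here is how I would carry it out. First, because $f$ is compactly supported --- or at least we may localize: cover $\supp f$ by finitely many sets of the form $K\times L$ with $K\subseteq\R^k$, $L\subseteq\R^\ell$ compact, and handle one at a time, patching with a partition of unity in the $y$-variable and noting the $x$-support stays inside a fixed compact $K$ --- we only need estimates on a set $K\times\R^\ell$ with $K$ compact. For each multi-index $\alpha\in\N^k$, $\beta\in\N^\ell$ and each $m\in\N$, consider the function
\[
t\longmapsto f_{\alpha,\beta,m}(t)\coloneqq \sup\Big\{\,\big|\partial_x^\alpha\partial_y^\beta f(x,y)\big| : x\in K,\ y\in L,\ \delta(x)\le t\,\Big\},
\]
where $\delta(x)=\operatorname{dist}(x,Z)$. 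Since $f$ vanishes to infinite order on $Z\times\R^\ell$ and $K\times L$ is compact, Taylor's theorem with remainder gives $f_{\alpha,\beta,m}(t)\,t^{-p}\to0$ as $t\to0^+$ for every $p$. Apply Lemma~\ref{slowdecay} to this countable family to obtain a smooth positive $g_0$ on $(0,\infty)$, flat at $0$, with $f_{\alpha,\beta,m}(t)/g_0(t)\to0$ for all $\alpha,\beta,m$. Now set $\rho\coloneqq(g_0\circ\Delta)_0$ as above.

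It remains to verify the hypotheses of Lemma~\ref{elemquot} for the pair $(f,\tilde\rho)$ on $\R^k\times\R^\ell$, where $\tilde\rho(x,y)=\rho(x)$: we need each $\partial_x^\alpha\partial_y^\beta f/\tilde\rho^{\,m}$ to vanish at the boundary of $(\R^k\setminus Z)\times\R^\ell$, i.e. as $\delta(x)\to0$ uniformly on compacta in $y$. Using the chain-rule expansion for $\partial^\alpha(g_0\circ\Delta)$ exactly as in the proof of Lemma~\ref{decayatZ}, together with the estimates $|\partial^\gamma\Delta|\le B_\gamma\delta^{1-|\gamma|}$ and $c_1\delta\le\Delta\le c_2\delta$ from Theorem~\ref{regdist}, one bounds $|\partial_x^\alpha\partial_y^\beta f|/\rho^{\,m}$ above by a finite sum of terms of the form $C\,f_{\alpha',\beta,m'}(\delta(x))\,\delta(x)^{-N}\big/g_0(\delta(x))^{m''}$ with exponents that can be absorbed because $g_0$ is flat; by construction of $g_0$ via Lemma~\ref{slowdecay}, every such term tends to $0$. (One must be slightly careful to first fix, for the given finitely many required $(\alpha,\beta,m)$, a finite subfamily of the $f_{\alpha',\beta',m'}$ and feed those into Lemma~\ref{slowdecay}; since the statement only requires finitely many $\varphi_i$, finitely many derivative orders suffice.) Lemma~\ref{elemquot} then yields that $g\coloneqq(f/\tilde\rho)_0\in C^\infty(\R^k\times\R^\ell)$ vanishes to infinite order on $Z\times\R^\ell$, and $f(x,y)=\rho(x)g(x,y)$ identically, which is the claim.

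The main obstacle is the bookkeeping in the last step: getting a clean uniform bound (over $y$ in compacta and as $\delta(x)\to0$) for the mixed derivatives $\partial_x^\alpha\partial_y^\beta(f/\rho^m)$ in terms of the one-variable ``modulus of flatness'' functions $f_{\alpha,\beta,m}$, and confirming that the Stein regularized-distance estimates exactly cancel the negative powers of $\delta$ that the Faà di Bruno expansion introduces. The $y$-derivatives are harmless (they commute past $\rho$), and the compactly-supported reduction controls the $y$-dependence, so the essential content is identical to the unparametrized case treated in \cite{Voigt} and \cite{Wodzicki}; the parameters come along for free once the localization in $y$ is set up.
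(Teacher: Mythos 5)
Your overall strategy is the one the paper itself uses: build a single slowly-decaying defining function $\rho=(g_0\circ\Delta)_0$ from a regularized distance (Theorem~\ref{regdist}, Lemma~\ref{decayatZ}), calibrate $g_0$ against a countable family of ``moduli of flatness'' of $f$ via Lemma~\ref{slowdecay}, and then invoke Lemma~\ref{elemquot}. However, there is a concrete flaw in your calibration step. Your functions $f_{\alpha,\beta,m}$ do not actually depend on $m$, and the property you extract from Lemma~\ref{slowdecay}, namely $f_{\alpha,\beta,m}(t)/g_0(t)\to 0$, only controls $|\partial_x^\alpha\partial_y^\beta f|/\rho$, i.e.\ the case $m=1$. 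Lemma~\ref{elemquot} requires $\partial^\alpha f/\rho^m$ to vanish at the boundary for \emph{every} $m$, and this does not follow: since $g_0(t)\to 0$, dividing by $g_0^m$ is strictly more demanding than dividing by $g_0$ (for instance $h=g_0^{3/2}$ satisfies $h/g_0\to 0$ but $h/g_0^{2}\to\infty$). Your remark that the extra exponents ``can be absorbed because $g_0$ is flat'' points the wrong way -- flatness of $g_0$ makes the higher powers worse, not better. The paper's device is to put the exponent into the modulus itself: it defines $f_{\alpha,m,r}(t)=\sup\{|\partial^\alpha f(x,y)|^{1/m}:|x|,|y|\le r,\ \Delta(x)\le t\}$, so that $f_{\alpha,m,r}/g\to 0$ yields $|\partial^\alpha f|/\rho^m\le\left(f_{\alpha,m,r}(\Delta(x))/g(\Delta(x))\right)^m\to 0$. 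Your framework accepts this repair without change, since the family remains countable; also note that all $(\alpha,m)$ are needed (not just finitely many), which is why one must use Lemma~\ref{slowdecay} on the full countable family rather than a finite sub-family as your parenthetical suggests.

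A secondary issue: the theorem does not assume $f$ compactly supported, so $\supp f$ need not be covered by finitely many products $K\times L$, and a partition of unity in the $y$-variable would in any case produce several factorizations $\chi_j f=\rho_j g_j$ with different $\rho_j$ that cannot simply be summed. The paper avoids localization altogether by indexing the sup-functions by a radius $r$ (an exhaustion of $\R^k\times\R^\ell$), feeding this still-countable family into Lemma~\ref{slowdecay}; since vanishing at the boundary is a local condition, the resulting single $\rho$ works everywhere.
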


\begin{proof}
Because $f$ vanishes with all its derivatives on $Z \times \R^\ell$, it follows (e.g. from Taylor's theorem) that  $(x,y)\mapsto f(x,y)\delta(x)^{-p}$ vanishes at the boundary of $W\times\R^\ell$  for any $p \in \N$, where $\delta(x)$ denotes the distance from $x$ to $Z$. The same is true if $\delta$ is replaced by a regularized distance function $\Delta : W \to (0,\infty)$ (Theorem~\ref{regdist}). Given $\alpha \in \N^k \times \N^\ell$, $m \in \N$, $r>0$, define $f_{\alpha,m,r} : (0,\infty)\to(0,\infty)$ by
\[ f_{\alpha,m,r}(t) = \sup \{ |\partial^\alpha f(x,y)|^{1/m}  : |x|,|y| \leq r \text{ and }
\Delta(x) \leq t \}. \]
By design, $f_{\alpha,m,r}$ is an increasing, continuous function satisfying 
\[ \lim_{t\to0^+} f_{\alpha,m,r}(t) t^{-p} = 0 \]
for all $p \in \N$. Thus, by Lemma~\ref{slowdecay}, there exists a smooth function $g : (0,\infty)\to(0,\infty)$ that vanishes with all its derivatives at $0$ such that 
\[ \lim_{t\to0^+} \frac{f_{\alpha,m,r}(t)}{g(t)} =0 \]
for all $\alpha \in \N^k\times\N^\ell$, $m\in\N$, $r>0$. By Lemma~\ref{decayatZ}, $g \circ \Delta$ extends by zero to a smooth function $\rho : \R^d \to [0,\infty)$ which vanishes to infinite order on $Z$. The estimate 
\[ \left| \frac{ \partial^\alpha f (x,y)}{\rho(x)^m} \right| \leq
\left( \frac{f_{\alpha, m,r}(\Delta(x))}{g(\Delta(x))} \right)^m \]
(valid for $|x|,|y| \leq r$) shows that $(x,y) \mapsto \frac{\partial^\alpha f(x,y)}{\rho(x)^m}$  vanishes at the boundary of $W \times \R^\ell$ for all $\alpha \in \N^k\times\N^\ell$, $m\in\N$ and so, by Theorem~\ref{elemquot}, $(x,y) \mapsto \frac{f(x,y)}{\rho(x)}$ extends by zero to a smooth function on $\R^k\times\R^\ell$ vanishing to infinite order on $Z \times \R^k$. 
\end{proof}

We are now in a position to state and prove the main result of this section.

\begin{thm}\label{subfac}
Let $\pi : M \to B$ be a submersion where $B$ is Hausdorff and $M$ is possibly non-Hausdorff. View $C_c^\infty(M)$ as a $C^\infty(B)$-module with module structure given by $f \cdot \varphi \coloneqq (f \circ \pi)\varphi$ for $f \in C^\infty(B)$, $\varphi \in C_c^\infty(M)$. Let $Z \subseteq B$ be a closed set and let $\varphi_1,\ldots, \varphi_N \in C_c^\infty(M)$ vanish to infinite order on $\pi^{-1}(Z)$ (Definition~\ref{vanishingorddef}). Then, there exists $\rho \in C^\infty(B)$ vanishing to infinite order on $Z$ and strictly positive on $W\coloneqq B \setminus Z$ and $\psi_1,\ldots,\psi_N \in C_c^\infty(M)$ vanishing to infinite order on $\pi^{-1}(Z)$ such that $\varphi_i=\rho \cdot \psi_i$ for $i=1,\ldots,N$. 
\end{thm}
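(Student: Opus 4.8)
The plan is to localize the problem to submersion charts, where it reduces to the Euclidean statement already handled by Theorem~\ref{localfac}, and then to patch together the resulting local defining functions into a single global $\rho \in C^\infty(B)$ by a partition-of-unity argument over the compact part of $B$ that actually matters.

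\emph{Localization.} Write $\ell \coloneqq \dim M - \dim B$. Since $\pi$ is a submersion, $M$ is covered by \emph{submersion charts}: diffeomorphisms $\chi_\alpha \colon U_\alpha \to \R^k \times \R^\ell$ together with charts $\psi_\alpha \colon V_\alpha \to \R^k$ of $B$ with $V_\alpha = \pi(U_\alpha)$ and $\psi_\alpha \circ \pi|_{U_\alpha} = \mathrm{pr}_1 \circ \chi_\alpha$ (one produces these from the submersion normal form, rescaling the resulting coordinate boxes to all of $\R^k$ and $\R^\ell$). Expanding each $\varphi_i$ according to Definition~\ref{vanishingorddef} and refining the summands by a partition of unity subordinate to the $U_\alpha$ (as in Lemma~\ref{cover}; multiplication by a smooth bump preserves infinite-order vanishing), I would reduce, after passing to a common finite family $U_1, \dots, U_r$ of submersion charts, to the case $\varphi_i = \sum_{\alpha=1}^r \varphi_{i\alpha}$ with $\varphi_{i\alpha} \in C_c^\infty(U_\alpha)$ vanishing to infinite order on $\pi^{-1}(Z) \cap U_\alpha$. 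Transporting through $\chi_\alpha$ turns $\varphi_{i\alpha}$ into $f_{i\alpha} \in C_c^\infty(\R^k \times \R^\ell)$ vanishing to infinite order on $Z_\alpha \times \R^\ell$, where $Z_\alpha \coloneqq \psi_\alpha(Z \cap V_\alpha)$ is closed in $\R^k$ and $\chi_\alpha(U_\alpha \cap \pi^{-1}(Z)) = Z_\alpha \times \R^\ell$.

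\emph{Local factorization.} The proof of Theorem~\ref{localfac} works verbatim for finitely many functions sharing a single $\rho$: the only role of the input there is to produce increasing "profile" functions, and Lemma~\ref{slowdecay} accommodates any countable family of these. Tracing that argument, the $\rho$ it produces has the form $g \circ \Delta$, with $\Delta$ a regularized distance (Theorem~\ref{regdist}) and $g$ flat at $0$ and permitted to decay as slowly as one wishes (Lemma~\ref{decayatZ}, Lemma~\ref{slowdecay}); this slack is exactly what makes globalization feasible. Concretely, given any $\rho \in C^\infty(\R^k)$ flat on $Z_\alpha$, positive off $Z_\alpha$, and decaying slowly enough relative to the $f_{i\alpha}$, Lemma~\ref{elemquot} yields that $f_{i\alpha}/\rho$ extends by zero to $g_{i\alpha} \in C_c^\infty(\R^k \times \R^\ell)$ vanishing to infinite order on $Z_\alpha \times \R^\ell$, with $\supp g_{i\alpha} \subseteq \supp f_{i\alpha}$.

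\emph{Globalizing $\rho$, and the main obstacle.} Put $L \coloneqq \bigcup_i \pi(\supp \varphi_i)$, which is compact by Proposition~\ref{pointset}, and arrange the $V_\alpha$ to cover $L$. On the compact set $L$, the regularized distances to $Z$ read in the various charts $\psi_\alpha$ are comparable up to multiplicative constants (property~(i) of Theorem~\ref{regdist} together with the bi-Lipschitz character of the transition maps on compacta), so only finitely many comparison constants intervene. Feeding all the profile functions $f_{i\alpha,\beta,m}$ attached to every chart, rescaled by those finitely many constants — still a countable family — into Lemma~\ref{slowdecay}, I obtain one increasing $g$, flat at $0$, that beats all of them; via Lemma~\ref{decayatZ} this gives, in each chart $V_\alpha$, a function $\rho_\alpha = g \circ \Delta_\alpha \in C^\infty(V_\alpha)$ flat on $Z \cap V_\alpha$, positive off $Z$, and, along $L$, decaying slowly enough to dominate the profiles from \emph{every} chart. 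I would then fix bumps $\eta_\alpha \in C_c^\infty(V_\alpha)$ with $\{\eta_\alpha \equiv 1\}$ still covering $L$, fix a background $\rho_0 \in C^\infty(B)$ flat on $Z$ and positive on $W$ (elementary: $Z = f^{-1}(0)$ for some $f \in C^\infty(B)$ with $f \ge 0$, and $\rho_0 \coloneqq h \circ f$ works with $h(t) = e^{-1/t}$ for $t > 0$, $h(0) = 0$), and set $\rho \coloneqq \rho_0 + \sum_\alpha \eta_\alpha \rho_\alpha \in C^\infty(B)$. Then $\rho$ is flat on $Z$ and positive on $W$; and since any sequence in $L$ converging to $Z$ eventually lies where some $\eta_\alpha \equiv 1$, on which $\rho \ge \rho_\alpha$, the function $\rho$ still dominates every profile near $Z$ along $L$. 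Applying the local factorization with this $\rho$ in each chart and setting $\psi_{i\alpha} \coloneqq (g_{i\alpha} \circ \chi_\alpha)_0 \in C_c^\infty(U_\alpha) \subseteq C_c^\infty(M)$ and $\psi_i \coloneqq \sum_\alpha \psi_{i\alpha}$, one checks $(\rho \circ \pi)\,\psi_{i\alpha} = \varphi_{i\alpha}$ chart-by-chart (both sides are supported in $U_\alpha$), hence $\varphi_i = \rho \cdot \psi_i$ with $\psi_i$ vanishing to infinite order on $\pi^{-1}(Z)$.

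I expect the third step to be the genuine difficulty: producing a \emph{single} global $\rho$ that simultaneously satisfies the divisibility hypothesis of Lemma~\ref{elemquot} in \emph{every} submersion chart. The subtlety is that $\rho$ read in different charts looks different, so the argument rests squarely on the comparability of the relevant regularized distance functions on the compact set $L$ and on the flexibility in Lemma~\ref{slowdecay} to pick one decay rate slow enough for all of them at once. Everything else — the reduction to submersion charts and the chart-by-chart bookkeeping — is routine adaptation of the Euclidean arguments and of Lemma~\ref{cover}.
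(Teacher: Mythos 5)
Your proposal is correct and follows the same skeleton as the paper's proof: localize over the Hausdorff base $B$ by a partition of unity into submersion charts, apply Theorem~\ref{localfac} chart by chart, and glue the local defining functions into a single global $\rho$. The difference is in the globalization step, which you rightly single out as the crux but resolve with more machinery than is needed. You manufacture one profile function $g$ dominating the derivative profiles of \emph{every} chart at once, which forces you to compare regularized distance functions across overlapping charts (a bi-Lipschitz-on-compacta argument that does work, with some care about where the nearest points of $Z_\alpha$ land) and to extract an \emph{increasing} $g$ from Lemma~\ref{slowdecay}, which that lemma does not literally provide. The paper's route is shorter: the divisibility criterion of Lemma~\ref{elemquot} involves only $\partial^\alpha f/g^m$ --- no derivatives of the denominator --- so it is manifestly monotone in the denominator; equivalently, if $f/\rho_1$ extends smoothly and flatly and $0<\rho_1\leq\rho_2$ with $\rho_2$ flat, then so does $f/\rho_2$ (this is Lemma~6.9 of \cite{Francis[DM]}, which the paper cites as a ``remark''). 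Granting this, the global $\rho=\rho_0+\sum_\alpha\eta_\alpha\rho_\alpha$ only needs to dominate, pointwise as a function on $B$, the \emph{same} chart's $\rho_\alpha$ on a neighbourhood of $\pi(\supp\varphi_{i\alpha})$, which is immediate once $\eta_\alpha\equiv1$ there; no cross-chart comparison of distance functions and no rescaled profiles ever enter. The same monotonicity observation is what lets the paper reduce to a single function $\varphi$ rather than rerunning Lemma~\ref{slowdecay} on the combined family, as you do. In short: your argument is sound and can be completed, but the one-line monotonicity remark collapses the part of it you expected to be the genuine difficulty.
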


\begin{proof}
Suppose  $f,\rho_1,\rho_2$  are smooth functions on $\R^k \times \R^\ell$ vanishing to infinite order on $\{0\} \times \R^\ell$ and that $0 < \rho_1 < \rho_2$  on the complement of $\{0\} \times \R^\ell$. We remark that, if $f/\rho_1$ extends to a smooth function on $\R^k \times \R^\ell$ vanishing to infinite order on $\{0\} \times \R^\ell$, then $f/\rho_2$ also extends to a smooth function on $\R^k \times \R^\ell$ vanishing to infinite order on $\{0\} \times \R^\ell$ (see Lemma 6.9~\cite{Francis[DM]}). This remark, together with the fact that $B$ is Hausdorff and therefore  admits smooth partitions of unity allow one to (i) consider only the local problem, where $M$ and $N$ are Euclidean spaces and $\pi$ is projection, and (ii) consider the case of only a single function $\varphi \in C_c^\infty(M)$. The local case of a single function is given by Theorem~\ref{localfac}.
\end{proof}

\section{H-unitality of ideals arising from  invariant subsets}

In this final section, we prove the second of our main results, the H-unitality of infinite order vanishing ideals in smooth groupoid algebras (Theorem~\ref{mainthm2} from the introduction).  This leads directly to an excision principle for invariant, closed subsets.  Permanence properties of H-unitality (Theorem~\ref{permanence}) give analogous results for Whitney functions as well. Applications of this excision result will be considered elsewhere.

Recall from Wodzicki's seminal paper on H-unitality,  if $Z$ is a closed subset of a Hausdorff smooth manifold $M$, then the ideal in $C^\infty(M)$ consisting of functions that vanish together with all derivatives on $Z$ is H-unital (Theorem~6.1, \cite{Wodzicki}). The goal here is to obtain the noncommutative analogue.

The main ingredient of the proof is the following direct corollary of Theorem~\ref{subfac} (take the submersion $\pi$ to be the target submersion).

\begin{cor}\label{targetfac}
If $G \rightrightarrows B$ is  a Lie groupoid, $Z \subseteq B$ is an invariant, closed subset and   $\varphi_1,\ldots,\varphi_N$ belong to the infinite order vanishing ideal $J_Z^\infty \subseteq C_c^\infty(G)$ (see Section~\ref{idealsubection}), then there exists a smooth function $\rho \in C^\infty(B)$,  vanishing to infinite order on $Z$ and positive on $B\setminus Z$ and $\psi_1,\ldots,\psi_N \in J_Z^\infty$ such that $\varphi_i=\rho \cdot \psi_i$ for $i=1,\ldots,N$. \qed
\end{cor}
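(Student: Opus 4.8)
The plan is to apply Theorem~\ref{subfac} directly, taking the submersion $\pi$ there to be the target map $t : G \to B$. First I would check that the hypotheses of Theorem~\ref{subfac} are met in this situation: by the standing conventions of Section~4, the unit space $B$ is Hausdorff and $t$ is a submersion, while the arrow space $G$ is permitted to be non-Hausdorff, which is exactly the generality of Theorem~\ref{subfac}. Moreover, the $C^\infty(B)$-module structure on $C_c^\infty(G)$ used in that theorem, $f\cdot\varphi = (f\circ t)\varphi$, is the one furnished by Proposition~\ref{convsetup}~(4).

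The only point requiring a remark is that the relevant vanishing loci coincide. By the definition recalled in Section~\ref{idealsubection}, an element of $J_Z^\infty$ is a function vanishing to infinite order on $G_Z = s^{-1}(Z)$; and since $Z$ is invariant one has $s^{-1}(Z) = t^{-1}(Z)$ (if $t(\gamma)\in Z$ then $s(\gamma^{-1}) = t(\gamma)\in Z$, hence $t(\gamma^{-1}) = s(\gamma)\in Z$ by invariance, and the reverse inclusion is the definition of invariance). Thus $\varphi_1,\ldots,\varphi_N\in J_Z^\infty$ expresses precisely that these functions vanish to infinite order on $\pi^{-1}(Z) = t^{-1}(Z)$. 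Theorem~\ref{subfac} then yields $\rho\in C^\infty(B)$ vanishing to infinite order on $Z$ and strictly positive on $B\setminus Z$, together with $\psi_1,\ldots,\psi_N\in C_c^\infty(G)$ vanishing to infinite order on $t^{-1}(Z) = G_Z$ --- that is, with $\psi_i\in J_Z^\infty$ --- such that $\varphi_i = \rho\cdot\psi_i = (\rho\circ t)\psi_i$ for each $i$.

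Since the statement is a verbatim specialization of Theorem~\ref{subfac}, there is no genuine obstacle to overcome here: all of the analytic content --- the regularized distance functions of Theorem~\ref{regdist}, the slow-decay lemma (Lemma~\ref{slowdecay}), the local factorization of Theorem~\ref{localfac}, and the passage to the submersion case via partitions of unity on the Hausdorff base --- has already been established in Section~6. The write-up is therefore a one-line invocation, with the invariance identity $s^{-1}(Z) = t^{-1}(Z)$ the only thing worth spelling out.
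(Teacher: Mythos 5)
Your proposal is correct and follows exactly the paper's route: the paper obtains this corollary by applying Theorem~\ref{subfac} to the target submersion $t : G \to B$, which is precisely what you do. Your explicit check that $s^{-1}(Z) = t^{-1}(Z)$ for invariant $Z$ is a small but welcome elaboration of a point the paper leaves implicit.
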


Note that, if $\rho \in C^\infty(B)$ is nonvanishing on $B\setminus Z$, then  $\varphi \mapsto \rho \cdot \varphi : J^\infty_Z \to J^\infty_Z$ is clearly injective. Indeed, if $G$ is Hausdorff and $Z$ is a closed submanifold, then $\varphi \mapsto \rho \cdot \varphi$ is injective on all of $C_c^\infty(G)$. When $G$ is non-Hausdorff and $Z\subseteq B$ is a closed submanifold (or more generally has empty interior), then injectivity of $\varphi \mapsto \rho\cdot\varphi$ on all of $C_c^\infty(G)$ may fail, as the following example shows.
\begin{ex}
Let $B= \R$ and let $G = \R^ \times \sqcup  \Z$, the ``line with infinitely many origins'' with its obvious non-Hausdorff smooth manifold structure. Then, $G$ is a Lie groupoid over $B$ where  $s=t$ is the obvious projection to $G \to B$ and multiplication $G^{(2)} = \R^\times \sqcup \Z^2 \to G$ is $(\id_{\R^\times} \sqcup \mathrm{addition})$.  Define $\varphi = 0 \sqcup f$ where $f : \Z \to \{0,1\}$ is given by $f(1)=1$, $f(-1)=-1$, $f(n)=0$ for $n \neq \pm 1$. It is easy to see that $\varphi$ is a (nonzero) element of $C_c^\infty(G)$. However, $\rho \cdot \varphi =0$ for any $\rho \in C^\infty(\R)$ vanishing at $0$.
\end{ex}

As in Section~5, we deduce H-unitality from a technical result designed to be used with Proposition~\ref{suffcond}.

\begin{thm}
Let $G \rightrightarrows B$ be a Lie groupoid with a given Haar system  and let $Z \subseteq B$ be a  $G$-invariant, closed subset. Let $\mathscr{P}$ be a finite subset of $J_Z^\infty$. Put $K \coloneqq \bigcup_{\varphi\in\mathscr{P}}t(\supp \varphi)$  and let $W\subseteq G$ be an open set with $K \subseteq W$. Then, there exist:
\begin{enumerate}
\item $f_1,\ldots,f_N \in J_Z^\infty \subseteq C_c^\infty(G)$,
\item a right ideal $A_0 \subseteq J_Z^\infty$ with $\mathscr{P} \subseteq A_0 \subseteq C_c^\infty(G)^K$,
\item right $C_c^\infty(G)$-linear maps $\Psi_1,\ldots,\Psi_N:A_0 \to J_Z^\infty$ that do not increase supports
\end{enumerate}
such that, for all $\varphi \in A_0$, we have $\varphi = f_1*\Psi_1(\varphi)+\ldots+f_N*\Psi(\varphi)$.
\end{thm}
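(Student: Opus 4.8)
The plan is to combine the factorization $\varphi_i = \rho \cdot \psi_i$ from Corollary~\ref{targetfac} with the machinery of Theorem~\ref{actionunitality} (equivalently, Corollary~\ref{Gfactor}), exploiting the fact that $\rho \in C^\infty(B)$ pulled back along the \emph{target} submersion acts as a central-type multiplier that is compatible with convolution from the left. First I would apply Corollary~\ref{targetfac} to the finite set $\mathscr{P} = \{\varphi_1,\ldots,\varphi_N\}$ to obtain $\rho \in C^\infty(B)$, vanishing to infinite order on $Z$ and strictly positive off $Z$, together with $\psi_1,\ldots,\psi_N \in J_Z^\infty$ with $\varphi_i = \rho \cdot \psi_i = (\rho \circ t)\psi_i$. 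Then I would feed the enlarged finite set $\{\psi_1,\ldots,\psi_N\}$ (whose targets of supports lie in the same compact $K$, since multiplying by $\rho\circ t$ does not increase supports and $\rho$ is positive off $Z$) into Corollary~\ref{Gfactor}, producing $f_1,\ldots,f_N \in C_c^\infty(W)$, a right ideal $\widetilde A_0 \subseteq C_c^\infty(G)$ with $\{\psi_i\} \subseteq \widetilde A_0 \subseteq C_c^\infty(G)^K$, and right $C_c^\infty(G)$-linear, support-non-increasing maps $\widetilde\Psi_j : \widetilde A_0 \to C_c^\infty(G)$ with $\psi = \sum_j f_j * \widetilde\Psi_j(\psi)$ for all $\psi \in \widetilde A_0$.

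The key manipulation is to absorb the scalar factor $\rho$ on the correct side. Since $\rho$ is pulled back along $t$, and left convolution by $f_j$ interacts with the target variable, one has the identity $(\rho \circ t)(f_j * \psi) = f_j * \psi$ only after care; the cleaner route is to note $(\rho\circ t)\cdot(f * \psi) = ((\rho\circ t) f) * \psi$ is \emph{false} in general, so instead I would push $\rho$ through as follows. Define the new building blocks by $\widehat f_j \coloneqq (\rho \circ t) f_j$. Because $f_j$ has support with target in $K$ and $\rho$ is finite-order-flat on $Z \cap K$, each $\widehat f_j$ lies in $J_Z^\infty$ (here one checks in charts that pulling back a function flat on $Z$ along $t$ and multiplying gives something flat on $G_Z = t^{-1}(Z)$). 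Set $A_0 \coloneqq \{\varphi \in C_c^\infty(G)^K : \text{$\varphi$ is divisible by $\rho\circ t$ with quotient in } \widetilde A_0\}$; concretely, since $\rho$ is nonvanishing off $Z$ and $\varphi \in J_Z^\infty$, the pointwise quotient $\varphi/(\rho\circ t)$ makes sense and lies in $J_Z^\infty$ by the same quotient lemmas (Lemma~\ref{elemquot}, Lemma~\ref{decayatZ}) used to prove Theorem~\ref{localfac}; one restricts to those $\varphi$ whose quotient additionally lies in $\widetilde A_0$. Then $A_0$ is a right ideal (right multiplication commutes with multiplication by $\rho\circ t$, which is a left $C^\infty(B)$-module action via $t$, and $\widetilde A_0$ is a right ideal), $\mathscr{P} \subseteq A_0$ since $\varphi_i = (\rho\circ t)\psi_i$ with $\psi_i \in \widetilde A_0$, and $A_0 \subseteq J_Z^\infty \cap C_c^\infty(G)^K$. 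Finally define $\Psi_j(\varphi) \coloneqq \widetilde\Psi_j(\varphi/(\rho\circ t))$; these are right $C_c^\infty(G)$-linear, support-non-increasing (dividing by a nonvanishing function off $Z$ and then applying a support-non-increasing map), land in $J_Z^\infty$, and satisfy
\[
\sum_{j} f_j * \Psi_j(\varphi) = \sum_j f_j * \widetilde\Psi_j\big(\varphi/(\rho\circ t)\big) = \varphi/(\rho\circ t),
\]
which is not quite $\varphi$. To fix this I instead take $\widehat\Psi_j(\varphi) \coloneqq (\rho\circ t)\cdot \widetilde\Psi_j(\varphi/(\rho\circ t))$ — wait, that reintroduces the flatness — so the clean fix is to keep the flat function on the $f$ side: use $\widehat f_j$ and $\Psi_j(\varphi) = \widetilde\Psi_j(\varphi/(\rho\circ t))$, and verify $\widehat f_j * \Psi_j(\varphi) = ((\rho\circ t)f_j) * \widetilde\Psi_j(\varphi/(\rho\circ t)) = (\rho\circ t)\big(f_j * \widetilde\Psi_j(\varphi/(\rho\circ t))\big)$, using that $\rho\circ t = \rho\circ t\circ(\text{target of }\widehat f_j * \cdot)$ — i.e. the target of $f_j * \psi$ equals the target of $f_j$, so multiplication by $\rho\circ t$ passes outside the convolution. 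Summing gives $(\rho\circ t)\cdot\big(\varphi/(\rho\circ t)\big) = \varphi$, as required.

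The main obstacle is this last bookkeeping: verifying that multiplication by $\rho\circ t$ commutes past left convolution in the precise sense $((\rho\circ t)f)*\psi = (\rho\circ t)(f*\psi)$, which holds because the target of $f*\psi$ is controlled by the target of $f$ (this is exactly the $C^\infty(B)$-module compatibility recorded in Proposition~\ref{convsetup}(4) and the left-module structure via $\tau = t$), and simultaneously checking that $\widehat f_j = (\rho\circ t)f_j \in J_Z^\infty$ and that $\varphi \mapsto \varphi/(\rho\circ t)$ is a well-defined support-non-increasing self-map of $J_Z^\infty \cap C_c^\infty(G)^K$ — the latter being a chart-local computation identical in spirit to Theorem~\ref{localfac}, using the regularized distance estimates of Theorem~\ref{regdist}. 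Once these compatibilities are in hand, the result follows by applying Proposition~\ref{suffcond} with $\phi(\varphi) = \sum_j \widehat f_j \otimes \Psi_j(\varphi)$, exactly as in the proof of Theorem~\ref{mainthm1}, yielding H-unitality of $J_Z^\infty$.
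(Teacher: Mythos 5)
Your proposal follows the same two-step strategy as the paper's proof: apply Corollary~\ref{targetfac} to extract a scalar factor $\rho$ flat on $Z$, run the factorization machinery of Corollary~\ref{Gfactor}, and then absorb $\rho$ into the convolution factors $f_j$ to force them into $J_Z^\infty$. The only structural difference is the order of operations: you divide by $\rho\circ t$ \emph{before} applying the factorization maps, so that $\Psi_j(\varphi)=\widetilde\Psi_j\bigl(\varphi/(\rho\circ t)\bigr)$ and the flat factor sits on the $f_j$ pulled back along $t$, whereas the paper applies its maps $\Phi_i$ first and divides afterwards, moving $\rho$ across the convolution onto the $g_i$. Both variants rest on the same $C^\infty(B)$-bimodule compatibilities of multiplication by $\rho\circ s$ and $\rho\circ t$ with convolution. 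Incidentally, your parenthetical claim that $((\rho\circ t)f)*\psi=(\rho\circ t)(f*\psi)$ is ``false in general'' is itself false: as you correctly verify two sentences later, the identity holds because every point at which $f$ is evaluated in the convolution integral has target $t(\gamma_0)$.

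Two points need repair. First, your justification that ``the pointwise quotient $\varphi/(\rho\circ t)$ makes sense and lies in $J_Z^\infty$'' for every $\varphi\in J_Z^\infty\cap C_c^\infty(G)^K$ is wrong: for a \emph{fixed} $\rho$, a generic element of $J_Z^\infty$ is not divisible by $\rho\circ t$ inside $C_c^\infty(G)$ (this is precisely why Theorem~\ref{subfac} must be invoked anew for each finite set). Your definition of $A_0$ by the divisibility condition survives this, but you should additionally require the quotient to lie in $J_Z^\infty$, not merely in $\widetilde A_0$, so that it is unique: in the non-Hausdorff case multiplication by $\rho\circ t$ need not be injective on all of $C_c^\infty(G)$, as the paper's example of the line with infinitely many origins shows. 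Second, you assert without argument that the $\Psi_j$ take values in $J_Z^\infty$; Corollary~\ref{Gfactor} only delivers maps $\widetilde\Psi_j$ with values in $C_c^\infty(G)$, so you must open the black box and check that the operators $P(X_{i_1})\cdots P(X_{i_n})$ preserve infinite-order vanishing on the invariant set $G_Z$ (true, since vector fields preserve the flat ideal of a closed set and the relevant series converge together with all derivatives, but it is a step that must be recorded). With these two repairs your argument goes through and yields the theorem.
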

\begin{proof}
Let $\mathscr{P} = \{ \varphi_1,\ldots,\varphi_n\}$. From Corollary~\ref{targetfac} above, there exists $\rho \in C^\infty(B)$,  vanishing to infinite order on $Z$ and positive on $B\setminus Z$,  and $\psi_1,\ldots,\psi_n \in J^\infty_Z$ such that $\varphi_i = \rho \cdot \psi_i$ for $i=1,\ldots,n$. We have that $\varphi \mapsto \rho \cdot \varphi$ is a right $C_c^\infty(G)$-linear bijection of $J_Z^\infty$ onto the right ideal $\rho \cdot J_Z^\infty \subseteq J_Z^\infty$. Let $M_{\frac{1}{\rho}}: \rho\cdot J_Z^\infty \to J_Z^\infty$ denote the inverse isomorphism of right $C_c^\infty(G)$-modules. By design, $M_{\frac{1}{\rho}}(\psi_i)=\varphi_i$ for $i=1,\ldots,n$.

From Corollary~\ref{Gfactor}, there exist:
\begin{enumerate}
\item $g_1,\ldots,g_N \in C_c^\infty(W) \subseteq C_c^\infty(G)$,
\item a right ideal $B_0 \subseteq C_c^\infty(G)$ with $\mathscr{P} \subseteq B_0 \subseteq C_c^\infty(G)^K$,
\item right $C_c^\infty(G)$-linear maps $\Phi_1,\ldots,\Phi_N:B_0  \to C_c^\infty(G)$ that do not increase supports
\end{enumerate}
such that, for all $\varphi \in B_0$, we have $\varphi = g_1*\Phi_1(\varphi)+\ldots+g_N*\Phi(\varphi)$. Define:
\begin{align*}
f_i = g_i \cdot \rho && A_0 \coloneqq B_0 \cap \rho \cdot J_Z^\infty && \Psi_i \coloneqq M_{\frac{1}{\rho}} \circ \Phi_i|_{A_0}  && i=1,\ldots,N
\end{align*}
so that
\begin{align*}
\varphi &= g_1*\Phi_1(\varphi)+\ldots+g_N*\Phi(\varphi) \\
&= g_i*(\rho\cdot \Psi_1(\varphi))+\ldots+g_N*(\rho\cdot \Psi_N(\varphi)) \\
&= f_1*\Psi_1(\varphi)+\ldots+f_N*\Psi(\varphi) 
\end{align*}
and the $f_i$, $A_0$ and $\Psi_i$ are as needed.
\end{proof}

As a corollary, we obtain the desired H-unitality result and its consequence for excision (Theorem~\ref{mainthm2} and Corollary~\ref{excision1} from the introduction).

\begin{cor}
For any Lie groupoid $G\rightrightarrows B$ with given Haar system and any $G$-invariant, closed subset $Z \subseteq B$, the associated ideal $J_Z^\infty \subseteq C_c^\infty(G)$ is H-unital. Consequently, the short exact sequence
\[ 
\begin{tikzcd}
0 \ar[r] & J_Z^\infty \ar[r] & C_c^\infty(G) \ar[r] & C_c^\infty(G)/J_Z^\infty \ar[r] & 0 
\end{tikzcd}
\]
induces a corresponding long exact sequence in cyclic/Hochschild homology.
\end{cor}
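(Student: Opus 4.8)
The plan is to deduce the H-unitality of $J_Z^\infty$ directly from the technical theorem just proved, feeding it into Wodzicki's sufficient criterion (Proposition~\ref{suffcond}), and then to obtain the long exact sequences from Theorem~\ref{wod}.

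First, fix an arbitrary finite subset $\mathscr{P} \subseteq J_Z^\infty$. Applying the preceding theorem (with $W$ chosen to be, say, a Hausdorff open neighbourhood of $K \coloneqq \bigcup_{\varphi \in \mathscr{P}} t(\supp\varphi)$, which exists by Proposition~\ref{hdrffnbhd}), we obtain functions $f_1,\ldots,f_N \in J_Z^\infty$, a right ideal $A_0$ with $\mathscr{P} \subseteq A_0 \subseteq J_Z^\infty$, and right $C_c^\infty(G)$-linear maps $\Psi_1,\ldots,\Psi_N : A_0 \to J_Z^\infty$ such that $\varphi = \sum_{i=1}^N f_i * \Psi_i(\varphi)$ for all $\varphi \in A_0$. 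I would then define $\phi : A_0 \to J_Z^\infty \otimes J_Z^\infty$ by $\phi(\varphi) = \sum_{i=1}^N f_i \otimes \Psi_i(\varphi)$. Since each $\Psi_i$ is right $C_c^\infty(G)$-linear, it is in particular right $J_Z^\infty$-linear, so $\phi$ is a morphism of right $J_Z^\infty$-modules; and the composite of $\phi$ with the multiplication map $J_Z^\infty \otimes J_Z^\infty \to J_Z^\infty$ sends $\varphi$ to $\sum_i f_i * \Psi_i(\varphi) = \varphi$, i.e.\ is the inclusion $A_0 \hookrightarrow J_Z^\infty$. Thus the hypotheses of Proposition~\ref{suffcond} are satisfied with $A = J_Z^\infty$, and hence $J_Z^\infty$ is H-unital.

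For the second assertion, I would invoke Theorem~\ref{wod}: H-unitality of $J_Z^\infty$ is equivalent to its possessing the excision property for both Hochschild and cyclic homology. Concretely, since $J_Z^\infty$ is an ideal of $C_c^\infty(G)$ and the ground field $\C$ has characteristic zero—so that the purity hypothesis of \cite{Wodzicki} is automatic—the short exact sequence $0 \to J_Z^\infty \to C_c^\infty(G) \to C_c^\infty(G)/J_Z^\infty \to 0$ induces the claimed long exact sequences in Hochschild and cyclic homology, exactly as in the discussion following Theorem~\ref{wod} in the introduction.

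All of the analytic content has already been absorbed into the preceding theorem (and, upstream, into Corollary~\ref{Gfactor} and Corollary~\ref{targetfac}), so no genuine obstacle remains. The only point deserving a moment's attention is verifying that $\phi$ genuinely takes values in $J_Z^\infty \otimes J_Z^\infty$, and that $A_0$ is a right ideal of $J_Z^\infty$ rather than merely a submodule of $C_c^\infty(G)$; both are guaranteed by the formulation of the preceding theorem, which places the $f_i$ and the ranges of the $\Psi_i$ inside $J_Z^\infty$ and makes $A_0$ a right ideal contained in $J_Z^\infty$.
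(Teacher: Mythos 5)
Your proposal is correct and follows exactly the paper's route: the paper's own proof is the one-line "Follows from Proposition~\ref{suffcond} and Theorem~\ref{wod}," and you have simply spelled out the (routine) verification that the data produced by the preceding theorem satisfy the hypotheses of Proposition~\ref{suffcond} with $A = J_Z^\infty$. The details you supply, including the observation that right $C_c^\infty(G)$-linearity of the $\Psi_i$ implies right $J_Z^\infty$-linearity and that $\phi$ lands in $J_Z^\infty \otimes J_Z^\infty$, are exactly what the paper leaves implicit.
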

\begin{proof}
Follows from Proposition~\ref{suffcond} and Theorem~\ref{wod}.
\end{proof}

Finally, we briefly reiterate the corollaries  for Whitney functions which were already discussed in the introduction.

\begin{defn}
Let $G\rightrightarrows B$ be a Lie groupoid with given Haar system and $Z \subseteq B$ a closed,  invariant submanifold with $J_Z^\infty \subseteq C_c^\infty(G)$ the corresponding ideal.  The convolution algebra of compactly-supported \textbf{Whitney functions} on $G_Z$ is defined as the quotient
\[ \mathcal{E}^\infty_c(G_Z) \coloneqq C_c^\infty(G)/J_Z^\infty \] 
(this notation disguises the fact that $\mathcal{E}^\infty_c(G_Z)$ depends on the inclusion of $G_Z$ in $G$, rather than only on $G_Z$). 
\end{defn}

\begin{rmk}
Taking $G$ Hausdorff, this  is not the classical definition of a Whitney function, but,  by Whitney's extension theorem \cite{Whitney}, it is equivalent. 
\end{rmk}

\begin{cor}
For any Lie groupoid $G\rightrightarrows B$ with given Haar system and any $G$-invariant, closed subset $Z \subseteq B$, the algebra of noncommutative Whitney functions $\mathcal{E}_c^\infty(G_Z)$ is H-unital.
\end{cor}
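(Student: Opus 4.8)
The plan is to obtain H-unitality of $\mathcal{E}_c^\infty(G_Z)$ purely formally, by combining the H-unitality of $J_Z^\infty$ established in the preceding corollary with Wodzicki's permanence property (Theorem~\ref{permanence}). Recall first that $J_Z^\infty \subseteq C_c^\infty(G)$ is an ideal, as verified in Section~\ref{idealsubection}, so that the quotient $\mathcal{E}_c^\infty(G_Z) = C_c^\infty(G)/J_Z^\infty$ carries a well-defined $\C$-algebra structure and one has a short exact sequence of $\C$-algebras
\[ \begin{tikzcd} 0 \ar[r] & J_Z^\infty \ar[r] & C_c^\infty(G) \ar[r] & \mathcal{E}_c^\infty(G_Z) \ar[r] & 0. \end{tikzcd} \]

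The argument then runs as follows. First I would invoke Theorem~\ref{mainthm1} (proved in Section~5) to record that $C_c^\infty(G)$ is H-unital, and the corollary immediately above to record that $J_Z^\infty$ is H-unital. Since the ground field $\C$ has characteristic zero, the extension is automatically pure, so Theorem~\ref{permanence} applies with $A = J_Z^\infty$ H-unital: it asserts that $B = C_c^\infty(G)$ is H-unital if and only if $C = \mathcal{E}_c^\infty(G_Z)$ is. As the hypothesis on $B$ is satisfied, we conclude that $\mathcal{E}_c^\infty(G_Z)$ is H-unital.

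There is essentially no obstacle in this last step; all of the genuine difficulty is concentrated in the two inputs, namely the H-unitality of $C_c^\infty(G)$ and of $J_Z^\infty$, both already proved above. The only point that merits a word of care is purely a matter of conventions: when $Z$ is merely a closed, invariant subset of $B$ rather than a submanifold, the symbol $\mathcal{E}_c^\infty(G_Z)$ should be read simply as the quotient $C_c^\infty(G)/J_Z^\infty$; with this understood, the reasoning above goes through verbatim. One could also remark, as in the introduction, that the same permanence argument applied to the nested pair $Z \subseteq Y$ yields H-unitality of the relative ideal $J_{Z,Y}^\infty \cong J_Z^\infty/J_Y^\infty$.
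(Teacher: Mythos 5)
Your proposal is correct and coincides with the paper's own argument: both invoke the H-unitality of $C_c^\infty(G)$ and of $J_Z^\infty$ and then apply Theorem~\ref{permanence} to the short exact sequence $0 \to J_Z^\infty \to C_c^\infty(G) \to \mathcal{E}_c^\infty(G_Z) \to 0$ to conclude that the quotient is H-unital. No gaps; the remarks about purity over $\C$ and the meaning of $\mathcal{E}_c^\infty(G_Z)$ for general closed invariant $Z$ are consistent with the paper's conventions.
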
 
\begin{proof}
Since $C_c^\infty(G)$ and $J_Z^\infty$ are H-unital, this follows from permanence of H-unitality under  quotients (Theorem~\ref{permanence}).
\end{proof}

\begin{cor}
Let $G\rightrightarrows B$ be a Lie groupoid with given Haar system. Let $Z \subseteq Y \subseteq B$ be $G$-invariant, closed subsets of  $B$, then $J^\infty_{Z,Y} \coloneqq \ker\left(\mathcal{E}_c^\infty(G_Y) \to \mathcal{E}_c^\infty(G_Z)\right)$ is H-unital. Consequently, the short exact sequence
\[ \begin{tikzcd} 
0 \ar[r] & J_{Z,Y}^\infty \ar[r] & \mathcal{E}_c^\infty(G_Z) \ar[r] &  \mathcal{E}_c^\infty(G_Y) \ar[r] & 0
\end{tikzcd}
\]
induces corresponding long exact sequences in cyclic and Hochschild homology.
\end{cor}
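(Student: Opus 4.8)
The plan is to realize $J_{Z,Y}^\infty$ as a quotient of two ideals that are already known to be H-unital, and then invoke Wodzicki's permanence theorem. First I would note that, since $Z \subseteq Y$, we have $G_Z \subseteq G_Y$, so (directly from Definition~\ref{vanishingorddef}) any function vanishing to infinite order on $G_Y$ also vanishes to infinite order on $G_Z$; that is, $J_Y^\infty \subseteq J_Z^\infty$. As $J_Y^\infty$ is an ideal of $C_c^\infty(G)$, it is in particular an ideal of the subalgebra $J_Z^\infty$, and the canonical surjection $\mathcal{E}_c^\infty(G_Y) = C_c^\infty(G)/J_Y^\infty \to C_c^\infty(G)/J_Z^\infty = \mathcal{E}_c^\infty(G_Z)$ is well defined with kernel $J_Z^\infty/J_Y^\infty$. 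This gives the identification $J_{Z,Y}^\infty \cong J_Z^\infty/J_Y^\infty$ together with the short exact sequence of $\C$-algebras appearing in the statement.

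Next I would apply the preceding corollary twice: both $J_Y^\infty$ and $J_Z^\infty$ are H-unital, being the infinite order vanishing ideals attached to the $G$-invariant, closed subsets $Y$ and $Z$ respectively. Now consider the short exact sequence
\[ \begin{tikzcd} 0 \ar[r] & J_Y^\infty \ar[r] & J_Z^\infty \ar[r] & J_{Z,Y}^\infty \ar[r] & 0. \end{tikzcd} \]
Here the subalgebra $J_Y^\infty$ is H-unital and the total algebra $J_Z^\infty$ is H-unital, so Theorem~\ref{permanence} forces the quotient $J_{Z,Y}^\infty$ to be H-unital as well.

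Finally, having established that $J_{Z,Y}^\infty$ is H-unital and that it sits as an ideal in $\mathcal{E}_c^\infty(G_Y)$ with quotient $\mathcal{E}_c^\infty(G_Z)$ — the purity hypothesis in Wodzicki's theorem being automatic over the field $\C$ — Theorem~\ref{wod} yields the asserted long exact sequences in Hochschild and cyclic homology. The argument is a formal consequence of the results already in hand; the only point that requires any care is the bookkeeping in the first step, namely verifying the inclusion $J_Y^\infty \subseteq J_Z^\infty$ at the level of infinite-order vanishing and that $J_Z^\infty/J_Y^\infty$ genuinely coincides with $\ker(\mathcal{E}_c^\infty(G_Y) \to \mathcal{E}_c^\infty(G_Z))$, both of which are immediate.
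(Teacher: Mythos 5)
Your proof is correct and follows essentially the same route as the paper: identify $J_{Z,Y}^\infty \cong J_Z^\infty/J_Y^\infty$, apply the preceding H-unitality result to both $J_Y^\infty$ and $J_Z^\infty$, conclude by Theorem~\ref{permanence} applied to $0 \to J_Y^\infty \to J_Z^\infty \to J_{Z,Y}^\infty \to 0$, and then invoke Theorem~\ref{wod} for the long exact sequences. Your version is merely more explicit about the inclusion $J_Y^\infty \subseteq J_Z^\infty$ and the identification of the kernel (and, incidentally, writes the short exact sequence with $\mathcal{E}_c^\infty(G_Y)$ correctly in the middle term).
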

\begin{proof}
Noting $J_{Z,Y}^\infty \cong {J^\infty_{Z}}/{J^\infty_{Y}}$, the H-unitality of $J_{Z_1,Z_2}^\infty$ follows from Theorem~\ref{permanence}. The statement concerning long exact sequences follows from Theorem~\ref{wod}.
\end{proof}





\bibliographystyle{abbrv}
\bibliography{Francis}

\end{document}